\numberwithin{equation}{section}
\theoremstyle{plain}
\newtheorem{theorem}{Theorem}[section]
\newtheorem{lemma}[theorem]{Lemma}
\newtheorem{proposition}[theorem]{Proposition}
\newtheorem{corollary}[theorem]{Corollary}
\theoremstyle{definition}
\newtheorem{definition}[theorem]{Definition}
\newtheorem{remark}[theorem]{Remark}
\newtheorem{example}[theorem]{Example}
\def\beqn{\begin{equation}}
\def\beqn*{$$}
\def\eeqn{\end{equation}}
\newcommand{\bj}{{\bf j}}
\def\P{\mathbb{P}}
\def\E{\mathbb{E}}
\newcommand{\reals}{{\mathbb R}}
\newcommand{\bbr}{\reals}
\newcommand{\bbn}{{\mathbb N}}
\newcommand{\bbq}{{\mathbb Q}}
\newcommand{\lk}{\text{lk}}
\newcommand{\bl}{{\bf l}}
\newcommand{\one}{{\mathbbm 1}}
\newcommand{\remove}[1]{}
\newcommand{\al}{\alpha}
\newcommand{\D}{\mathcal D}
\newcommand{\Var}{\text{\rm Var}}
\newcommand{\bk}{{\bf k}}
\newcommand{\beko}{\beta_{k+1}}
\newcommand{\etasig}{\eta_\sigma}
\newcommand{\etatau}{\eta_\tau}
\newcommand{\etasigjrko}{\eta_\sigma^{(j,r,k+1)}}
\newcommand{\G}{\mathcal G}
\newcommand{\W}{\mathcal W}
\newcommand{\Cov}{\text{\rm Cov}}
\newcommand{\ba}{{\bf a}}
\newcommand{\A}{\mathcal A}
\newcommand{\bp}{{\bf p}}
\newcommand{\balpha}{{\bm \alpha}}
\newcommand{\calL}{{\mathcal L}}
\begin{document}

\title[Limit theorems for topological invariants]
{Limit theorems for topological invariants of the dynamic multi-parameter simplicial complex}
\author{Takashi Owada}
\address{Department of Statistics\\
Purdue University \\
IN, 47907, USA}
\email{owada@purdue.edu}
\author{Gennady Samorodnitsky}
\address{School of Operations Research and Information Engineering\\
Cornell University \\
NY, 14853, USA}
\email{gs18@cornell.edu}
\author{Gugan Thoppe}
\address{Department of Computer Science and Automation \\
Indian Institute of Science \\
Bengaluru, India}
\email{gthoppe@iisc.ac.in}

\thanks{Owada's and Thoppe's research are partially supported by NSF grants, 
  DMS-1811428 and DMS 17-13012, respectively. Samorodnitsky's research is partially supported by the   ARO grant  W911NF-18 -10318 at Cornell  University.}

\subjclass[2010]{Primary 60F17. Secondary 55U05, 60C05, 60F15. }
\keywords{Functional central limit theorem, Functional strong law of large numbers, Betti number, Euler characteristic, multi-parameter simplicial complex.}

\begin{abstract}

\noindent 
Topological study of existing random simplicial complexes is non-trivial and has led to several seminal works. The applicability of such studies is, however, limited since the randomness in these models is usually governed by a single parameter. With this in mind, we focus here on the topology of the recently proposed multi-parameter random simplicial complex. In particular, we introduce a dynamic variant of this model and look at how its topology evolves.  In this dynamic setup, the temporal evolution of simplices is determined by stationary and possibly non-Markovian processes with a renewal structure.  Special cases of this setup include the dynamic versions of the clique complex and the Linial-Meshulum complex. Our key result concerns the regime where the face-count of a particular dimension dominates. We show that the Betti number corresponding to this dimension and the Euler characteristic satisfy a functional strong law of large numbers and a functional central limit theorem. Surprisingly, in the latter result, the limiting Gaussian process depends only upon the dynamics in the smallest non-trivial dimension. 
\end{abstract}

\maketitle

\section{Introduction}  \label{s:introduction}

The classical Erd\"os-R\'enyi graph $G(n, p)$ is a random graph on $n$ vertices in which each edge is present with probability $p$ independently. Even in such a simple model, answering topological questions such as  the threshold (in terms of the rate of decay of $p = p_n$ as $n\to\infty$) for connectivity (\cite{erdos:renyi:1959}) or for the existence of cycles (\cite{pittel:1988}) is completely non-trivial. Not surprisingly then, such a study becomes even more
interesting and difficult when posed in the context of random simplicial complexes---the higher dimensional generalizations of random graphs. Our focus in this work is on the general multi-parameter model of combinatorial random simplicial
complexes introduced by
\cite{costa:farber:2016,costa:farber:2017}. 


A summary of the recent progress made in the study of random complexes generalizing the Erd\"os-R\'enyi graph is as follows. The natural complex
built over any graph is its {\it clique complex}, otherwise known as
the {\it flag complex}, in which a set of vertices form a face or a
simplex if they form a clique in the
original graph. The topological properties of the random clique complex built over the Erd\"os-R\'enyi graph were
studied in \cite{kahle:2009}. This paper revealed, in particular,
the existence of a ``dominating dimension'', i.e.,~Betti numbers\footnote{The $k$th Betti number is a count of ``holes" of dimension $k + 1.$} of this dimension significantly exceed those of other dimensions, at least on average.

The $k$-dimensional
Linial-Meshulam complex is another important extension of the
Erd\"os-R\'enyi graph.  The $k = 2$ case of this model was  introduced by 
\cite{linial:meshulam:2006}, which was then extended to general $k$ by
\cite{meshulam:wallach:2009}. Here, 
one starts with a full
$(k-1)$-skeleton on $n$ vertices and 
then adds $k$-simplices with probability $p$
independently. Recently, topological features of the $k$-dimensional Linial-Meshulam complex,
with potential $k$-simplices weighted by independent standard uniform
random variables, were investigated by \cite{hiraoka:shirai:2017}, \cite{hino:2019},
\cite{Skraba:2020}, and \cite{Fraiman:2020}.

The  multi-parameter model introduced in
\cite{costa:farber:2016,costa:farber:2017} is a generalization of all of these models (see the next section for the formal definition). It was analyzed to some extent in
\cite{fowler:2019}, in which it was shown that a dominating dimension
exists in this model as well. In this work, we go beyond and examine the topological behavior in this dominating dimension as well as study its deviation from the expected behavior.

\cite{kahle:meckes:2013} did such a study in the context of random clique complexes
and proved a central limit
theorem for the dominating Betti number. To obtain an even deeper understanding, \cite{thoppe:yogeshwaran:adler:2016}  investigated the topological fluctuations in the dynamic variant of this model. Specifically, they considered the setup in which
every edge can change its state between being ON and being OFF,
i.e., between  being present and being absent, at the
transition times of a continuous-time Markov chain. They then derived a
functional central limit theorem for the Euler characteristic and the dominating 
Betti number of the resulting dynamic
clique complex. 

Within the context of the combinatorial simplicial complexes,
few attempts have been made at deriving
``functional-level" limit theorems for topological invariants (with a
few exceptions such as \cite{thoppe:yogeshwaran:adler:2016}, \cite{Skraba:2020}, and \cite{Fraiman:2020}). Our work fills in this gap. We introduce a dynamic variant of
the general multi-parameter random simplicial complex and derive a functional strong law of large numbers
and a functional central limit theorem for the Euler
characteristic and the dominating Betti number. Both of our results are proved in the space $D[0,\infty)$ of right continuous functions with left limits. Additionally, unlike \cite{thoppe:yogeshwaran:adler:2016}, we  do not assume a Markovian structure for the process according to which the
faces of the complex are switched on or off. Instead, the evolution here is determined by a stationary process with a renewal structure.  
Surprisingly, 
our key results indicate that the limiting Gaussian process in the
central limit theorem depends only upon the dynamics of the faces in
the smallest non-trivial dimension, irrespective of the dominating
dimension. This happens mainly because the faces in the smallest
non-trivial dimension are crucial for the existence of all higher
order faces.

The generality of our multi-parameter setup forces us to devise new
tools not needed under the random clique complex assumptions of
\cite{kahle:meckes:2013} and \cite{thoppe:yogeshwaran:adler:2016}. In
the latter case, for example, all Betti numbers of order greater than
the dominating dimension vanish with high probability. This is,
generally, not the case under our general setup. We solve this
difficulty by devising new ways of a much more detailed analysis of
these Betti numbers; see Section \ref{s:proofs.betti}. New coupling
arguments play a crucial role as well, especially in the proof of
functional strong laws  of large numbers. Such coupling arguments
enable one to stochastically dominate the face-counts in the dynamic
complex by those of a suitably defined static complex, e.g., see
\eqref{e:coupling.trick}.  We believe that such arguments could have
applications beyond the present context. 

This paper is organized as follows. In Section
\ref{s:dynamic.multi-para.complex}, we construct the dynamic
multi-parameter simplicial  complex and study some of its elementary
properties. A functional central limit theorem for the face counts in
this complex is 
stated in Section \ref{sec:face.counts}. Section
\ref{sec:top.invariants} contains the   main theorems for the Euler
characteristic and the Betti number in the dominating dimension. The
limit theorem for the face counts is proved in Section
\ref{s:proofs.facecounts}, and the limit theorems for the Euler
characteristic are proved in Section \ref{s:proofs.euler}, while the
limit theorems for the Betti numbers in the critical (dominating
dimension) are proved in Section \ref{s:proofs.betti}. Some of the
proofs are postponed to the Appendix. 

The following notation will be used throughout the paper. 
The cardinality of a set $A$ will be denoted by $|A|$. 
The indicator
function of an event will be denoted by $\one \{ \cdot \}$. For two
positive sequences $(a_n)$ and $(b_n)$ the notation $a_n \sim b_n$
means that $a_n/b_n\to 1$ as $n\to\infty$. The ``fat arrow''
$\Rightarrow$ is reserved for weak convergence, where the topology is
obvious from the context (in this paper it is mostly the Skorohod
$J_1$-topology on $D[0,\infty)$). The stochastic domination of a
random variable $X$ by a random variable $Y$ (meaning that $P(X\leq
x)\geq P(Y\leq x)$ for all $x$) is denoted by $X \stackrel{st}{\le}
Y$.

\section{The dynamic multi-parameter simplicial
  complex}  \label{s:dynamic.multi-para.complex}

We begin by recalling the original multi-parameter simplicial complex
introduced by
\cite{costa:farber:2016,costa:farber:2017}. Starting with the alphabet
$[n]=\{1,\ldots, n\}$ and parameters $\bp=\bp(n)=(p_1,\ldots,p_{n-1})$ with 
$p_i\in [0,1], \, i=1,\ldots,
n-1,$ one constructs the complex $X([n], \bp)$
incrementally, one dimension at a time. Specifically, begin with $X([n],
\bp)^{(0)}=[n]$. For $i=1,\ldots, n-1$, once the skeleton\footnote{The $i$th skeleton of a complex consists of all of its faces with dimension less than or equal to $i.$}
$X([n], \bp)^{(i-1)}$ has been constructed, add to $X([n], \bp)$ each
$i$-simplex\footnote{a subset of $[n]$ with cardinality $i + 1.$} whose boundary is in $X([n], \bp)^{(i-1)}$, with probability
$p_i$ independently of all other potential $i$-simplices. 
Note that the probabilities in $\bp$ may 
depend on $n$. 

Next, we  define the ``dynamic" version of the multi-parameter
simplicial complex with a parameter sequence $\bp$. 
The key ingredient for our construction is a
collection of independent stochastic processes  
\begin{equation}  \label{e:on.off.proc}
\big(  \Delta_{i,A}(t), \,  t\ge 0 \big), \ \ 1\le i \le n-1,  \ A\in \W_i,
\end{equation}
where $\W_i := \big\{ A \subseteq [n]: |A|=i+1 \big\}$. Each of the
processes in \eqref{e:on.off.proc} is a $\{ 0,1 \}$-valued
stationary  process and, for $1 \le i \le n-1$ and  $A\in \W_i$,  
\begin{equation} \label{e:dynamic.rule}
A \text{ forms an } i\text{-face at time } t \ \ \Leftrightarrow \ \  \Delta_{\ell, B}(t)=1 \ \ \text{for all } \ell\in\{ 1,\dots,i \}, \, B\in \W_\ell \, \text{ with } B\subseteq A. 
\end{equation}
Equivalently, $A$ does not form an $i$-face at time $t$  if and only if $\Delta_{\ell, B}(t)=0$ for some $\ell \in \{ 1,\dots,i \}$ and $B\in \W_\ell$ with $B\subseteq A$. 
We say that the process $\Delta_{i,A}$ is ``on" at time $t$ if
$\Delta_{i,A}(t)=1$, and it is ``off" otherwise. We assume that, 
 for each $i \ge 1$, $(\Delta_{i,A}, \, A\in \W_i)$ constitutes a
 family of (independent) processes with a common distribution. We often
 drop the subscript $A$ when only the dimension $i$ matters. 

To give a clear picture of our model, we provide a simple example for $n = 4$ in Figure~\ref{fig:01.proc}. 
In this case, there appears a $3$-face on $[4]=\{ 1,2,3,4\}$ if and only if the eleven independent processes $(\Delta_{i,A}, \, 1\le i \le 3, A\in \W_i)$ are all at an ``on" state. For example, such a $3$-face is present at time $t_0$. At time $t_1$, the process $\Delta_{1, \{1,3\}}$ is ``off", while all the others are ``on." Then, 
the $2$-faces $[1,2,3]$, $[1,3,4]$ and the $3$-face $[1,2,3,4]$ do not appear in the model, whereas all 
the other $2$-faces do exist.

\begin{figure}
\includegraphics[scale=0.4]{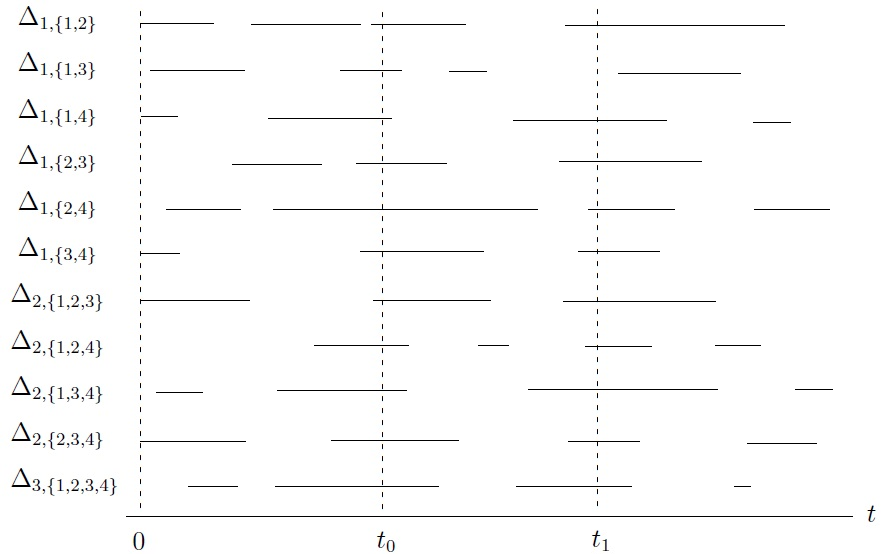}
\caption{\label{fig:01.proc} \footnotesize{Eleven independent stochastic processes with $n=4$. Each process stays at an ``on" state whenever a line segment appears, and it is at an ``off" state if the line segment disappears. }}
\end{figure}

We now model each $\Delta_i,$ $i = 1, 
\ldots, n - 1,$ via a specific $
\{0, 1
\}$-valued stationary renewal process. Let $\big( Z_j^{(i)}, \, j \ge 2  \big)$ be a sequence of
iid positive random variables with a common distribution function
$G_i$ and a finite positive mean $\mu_i$. 
The following assumption on the distribution functions $(G_i)$ will be
a standing assumption throughout the paper: letting $q:=\min\{ i\ge 1:
p_i <1 \}$, assume that 
\begin{equation} \label{e:assumption.Gi}
\text{there is $a>0$ such that} \ \ G_i(a)\leq 1/2 \ \ \text{for each
  $i=q,q+1,\ldots$.}
\end{equation}

Separately, let $\big( I_j^{(i)},
\, j \ge 0 \big)$ be a sequence of iid Bernoulli variables with
parameter $p_i.$ Finally, let $D^{(i)}$ be an \textit{equilibrium
random variable} with the distribution 
\begin{equation}  \label{e:equilibrium}
\P(D^{(i)} \le x) = \frac{1}{\mu_i} \int_0^x \big( 1-G_i(y)  \big)dy
=: (G_i)_e(x), \ \ x \ge 0.  
\end{equation}
All the random objects $\big( Z_j^{(i)} \big)$, $\big( I_j^{(i)}
\big)$, and $D^{(i)}$ are independent.  We define a delayed renewal
sequence by $S_0^{(i)}=0$, $S_1^{(i)}=D^{(i)}$, and  
\begin{equation}  \label{e:renewal.seq}
S_j^{(i)} = D^{(i)} + \sum_{\ell=2}^j Z_\ell^{(i)}, \ \ j \ge 2, 
\end{equation}
and the corresponding counting process, 
\begin{equation}  \label{e:stat.renewal.proc}
N_i(t) = \sum_{j=1}^\infty \one \{ S_j^{(i)} \le t \}, \ \ t \ge 0. 
\end{equation}
Since the first renewal time has the equilibrium distribution given
by \eqref{e:equilibrium}, the  delayed process $N_i$ in \eqref{e:stat.renewal.proc}
has stationary increments (\cite{ross:1996}). In particular, $\E\big( N_i(t) 
\big)=t/\mu_i$ . We finally define 
\begin{equation}
\label{e:Delta.process.rule}
\Delta_i(t) := \sum_{j=0}^\infty \one \big\{  S_j^{(i)} \le t <
S_{j+1}^{(i)} \big\} I_j^{(i)}, \ \  t \ge 0. 
\end{equation}

\begin{definition}
The dynamic multi-parameter simplicial complex $\big( X([n], \bp; t), \, t \ge 0\big)$ on $n$ vertices is defined by
\eqref{e:dynamic.rule}. For each dimension $i,$ the temporal evolution of
the $i$-dimensional faces  is determined by the independent processes $\big(
\Delta_{i,A}, \, 1\le i\le n-1, \, A\in \W_i \big)$ described in \eqref{e:Delta.process.rule}.
\end{definition}
\begin{remark}
As stated below in Lemma~\ref{l:cond.prob.Delta}, $\Delta_i$ is a stationary process for every $i$ which implies that $(X([n], \bp; t), t\geq 0)$ itself is stationary. In fact, for each $t \geq 0,$ $X([n], \bp; t)$ has the 
same distribution as that of the static multi-parameter simplicial
complex in \cite{costa:farber:2016,costa:farber:2017}.
\end{remark}
\begin{remark}
If  $\bp = (p, 1, 1, \ldots)$ and
$G_1(x)=1-e^{-\lambda x}$, $x\ge0$ for some $\lambda>0$, then $X([n],
\bp; t)$ is a reparametrization of the dynamic clique complex, for
which the evolution 
of the edges is determined by the $\{ 0,1 \}$-valued stationary continuous-time Markov chain (\cite{thoppe:yogeshwaran:adler:2016}).  
\end{remark}

The next result formally records the fact that, for each $i,$
$\Delta_i$ is a stationary process. It also states and proves a couple
of  useful properties concerning it. In particular, it shows that if
$p_i$ is small, then $\Delta_i$ is most of time off.
\begin{lemma}  \label{l:cond.prob.Delta}
(i) For every $i \in \{1, \dots,n-1\}$, $\big( \Delta_i(t), \, t \ge 0 \big)$ is a stationary process with $\P\big(\Delta_i(t)=1 \big)=p_i$. In addition, 
$$
\P\big( \Delta_i(t)=1\,  \big|\,  \Delta_i(0)=1\big) = 1 - (1-p_i)(G_i)_e(t), \ \  t \ge 0. 
$$

(ii) For every $i \ge q$ and $T>0$, 
\begin{equation} \label{e:sup.bound1}
\P\big( \sup_{0\le t \le T} \Delta_i(t)=1  \big) 
\leq p_i \left( 1+ (1-p_i)\frac{(G_i)_e(T)}{1-G_i(T)} \right). 
\end{equation}
\end{lemma}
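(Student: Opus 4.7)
My plan is to use the representation $\Delta_i(t) = I_{N_i(t)}^{(i)}$, valid with the convention $S_0^{(i)}=0$ so that $N_i(t)=0$ corresponds exactly to $0 \le t < S_1^{(i)} = D^{(i)}$ and $\Delta_i(0) = I_0^{(i)}$. Since the Bernoulli sequence $(I_j^{(i)})$ is independent of $(S_j^{(i)})$ and each $I_j^{(i)}$ has mean $p_i$, conditioning on $N_i(t)$ and summing yields $\P(\Delta_i(t)=1)=p_i$ at once. For stationarity of the process, I would invoke the standard fact (e.g.\ from \cite{ross:1996}) that the equilibrium delay distribution $(G_i)_e$ is precisely the one making the counting process $N_i$ in \eqref{e:stat.renewal.proc} have stationary increments; combining this with the iid structure of $(I_j^{(i)})$, independent of the renewal times, transfers the stationarity to $(\Delta_i(t),\, t \ge 0)$.

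For the conditional probability, I would split on whether $D^{(i)}>t$. On $\{D^{(i)}>t\}$ one has $N_i(t)=0$, hence $\Delta_i(t) = I_0^{(i)} = \Delta_i(0)$ identically; on $\{D^{(i)} \le t\}$ one has $N_i(t) \ge 1$, so $\Delta_i(t) = I_{N_i(t)}^{(i)}$ with index $\ge 1$, which is independent of $\Delta_i(0) = I_0^{(i)}$ and equals $1$ with probability $p_i$. Combining these two cases gives
\[
\P\bigl(\Delta_i(t)=1 \,\big|\, \Delta_i(0)=1\bigr) = \bigl(1-(G_i)_e(t)\bigr) + p_i\,(G_i)_e(t) = 1 - (1-p_i)(G_i)_e(t).
\]

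\textbf{Part (ii).} The event $\{\sup_{0 \le t \le T} \Delta_i(t) = 1\}$ occurs iff $I_j^{(i)}=1$ for some $j \ge 0$ with $S_j^{(i)} \le T$. I would split it into $\{I_0^{(i)}=1\}$, contributing exactly $p_i$, and $\{I_0^{(i)}=0\}$, on which I introduce $T_1 := \inf\{k \ge 1 : I_k^{(i)} = 1\}$. By independence of $(I_k^{(i)})$ from the renewal times, $T_1$ is geometric with parameter $p_i$ and the conditional probability of the sup event given $I_0^{(i)} = 0$ equals $\P(S_{T_1}^{(i)} \le T)$. Writing $S_k^{(i)} = D^{(i)} + Z_2^{(i)} + \cdots + Z_k^{(i)}$ and using that each summand must be at most $T$ for the total to be, I bound $\P(S_k^{(i)} \le T) \le (G_i)_e(T)\,G_i(T)^{k-1}$. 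Summing the geometric series
\[
\sum_{k \ge 1} p_i (1-p_i)^{k-1} (G_i)_e(T)\, G_i(T)^{k-1} = \frac{p_i (G_i)_e(T)}{1 - (1-p_i) G_i(T)},
\]
and bounding the denominator below by $1 - G_i(T)$, then multiplying by $(1-p_i)$ and adding the $p_i$ from the first case gives the stated inequality.

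The main technical point is the sharp control in part (ii): a naive union bound via $\sum_j \P(S_j^{(i)} \le T) = \E[N_i(T)] = T/\mu_i$ would only yield $p_i(1 + T/\mu_i)$, which is inadequate for the downstream applications. The improvement comes from exploiting that an ``on'' Bernoulli at index $k$ also forces each intermediate waiting time $Z_j^{(i)}$ to be at most $T$, producing the decaying factor $G_i(T)^{k-1}$ that makes the series summable against $(1-p_i)^{k-1}$. Part (i), by contrast, is essentially bookkeeping once one notices that $\Delta_i(t)$ and $\Delta_i(0)$ can be dependent only through the event that no renewal has occurred in $[0,t]$.
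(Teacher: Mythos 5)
Your proof is correct and follows essentially the same strategy as the paper's. In part (i), both you and the authors split on $\{D^{(i)}>t\}$ versus $\{D^{(i)}\le t\}$ and use that $\Delta_i(t)$ decouples from $\Delta_i(0)=I_0^{(i)}$ once a renewal has occurred, arriving at $\P(0\le t<D^{(i)})+p_i\P(t\ge D^{(i)})$. In part (ii), both split off the $\{I_0^{(i)}=1\}$ event (contributing $p_i$) and then control the remaining probability by exploiting that, for $j$ renewals to land in $[0,T]$, each interarrival time $Z_2^{(i)},\dots,Z_j^{(i)}$ (and $D^{(i)}$) must individually be $\le T$. The only organizational difference is in how that control is encoded: you sum over the first-success index $T_1$ of the Bernoulli marks and bound $\P(S_k^{(i)}\le T)$ directly, whereas the paper writes the conditional probability as $\E[1-(1-p_i)^K]$ with $K=N_i(T)$ and stochastically dominates $K$ by a geometric-type $K'$. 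Both routes collapse to the same intermediate quantity $p_i(G_i)_e(T)/\bigl(1-(1-p_i)G_i(T)\bigr)$, which is then bounded by enlarging to $1-G_i(T)$ in the denominator, so this is a presentational variant rather than a genuinely different argument.
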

\begin{proof}
The first statement in part (i) is obvious, because the process $N_i(t)$ has stationary increments. For the second one,  
\begin{align*}
\P\big( \Delta_i(t)=1\,  \big|\,  \Delta_i(0)=1\big) &=  \P \big(
    0 \le t  < D^{(i)} \bigr) + p_i P \big( 
    t  \geq  D^{(i)} \bigr) \\
&=1-(1-p_i)(G_i)_e(t).
\end{align*}
For Part $(ii)$, denote
$$
K= N_i(T) = \max\bigl\{ j\geq 1: S_j^{(i)}\leq T\bigr\} \ \ \text{($K=0$ if $S_1^{(i)}>T$).} 
$$
Then, 
\begin{align*}
\P\big( \sup_{0\le t \le T} \Delta_i(t)=1  \big) &= p_i + \P\big( \Delta_i(0)=0, \, \sup_{0 < t \le T} \Delta_i(t)=1  \big) \\  
&=p_i + (1-p_i)  \E\bigl[ 1-(1-p_i)^K\bigr].
\end{align*}
It is clear that $K$ is dominated by 
$$
K' := \begin{cases}
\min\{ j\ge 2: Z_j^{(i)}>T \}-1 & \text{if } D^{(i)} \le T \\
0 & \text{if } D^{(i)} >T. 
\end{cases}
$$
Evaluating the above
expression with $K$ replaced by $K'$ gives us
\eqref{e:sup.bound1}. 
\end{proof}

Sometimes we will also impose the following additional assumption on
the distributions $(G_i)$. 
\begin{equation}  \label{e:cond.regularity}
c := \sup_{i\ge q}\sup_{h>0,\,  0\leq y\leq 1
}\frac{G_i(y+h)-G_i(y)}{h^\gamma} < \infty \ \ \text{for some } 0 < 
\gamma \le 1, 
\end{equation}
Note that \eqref{e:cond.regularity} holds if $G_i$'s have a common
bounded density function (such as an exponential density). 

Under this additional assumption, we have the following estimates. 
\begin{lemma} \label{l:triples}
Assume  \eqref{e:cond.regularity}. Then for all
$0\leq r<s<t\leq 1$,
\begin{equation} \label{e:triple.1}
\P\bigl( \Delta_i(r)=0, \Delta_i(s)=1, \Delta_i(t)=0\bigr)
\leq \frac{2c}{a} p_i(t-r)^{1+\gamma}
\end{equation}
and
\begin{equation} \label{e:triple.2}
\P\bigl( \Delta_i(r)=1, \Delta_i(s)=0, \Delta_i(t)=1\bigr)
\leq \frac{2c}{a} p_i^2(t-r)^{1+\gamma}. 
\end{equation}
\end{lemma}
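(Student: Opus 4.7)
The plan hinges on two structural observations about $\Delta_i$. First, $\Delta_i$ is piecewise constant between consecutive renewals of $N_i$, and on each such piece its value is an independent Bernoulli($p_i$) draw from the sequence $\bigl(I_j^{(i)}\bigr)$. Consequently, whenever $\Delta_i$ exhibits either of the patterns $(0,1,0)$ or $(1,0,1)$ at times $r<s<t$, there must be at least one renewal of $N_i$ in $(r,s]$ and at least one more in $(s,t]$; in particular $N_i(t)-N_i(r)\ge 2$.

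Making this quantitative, let $J_u$ be the renewal index active at time $u$, so that $\Delta_i(u) = I_{J_u}^{(i)}$. On the two-renewal event, the indices $J_r<J_s<J_t$ are distinct, so conditionally on the renewal process the three Bernoullis $I_{J_r}^{(i)}, I_{J_s}^{(i)}, I_{J_t}^{(i)}$ are iid. This factorization yields
\begin{align*}
\P\bigl(\Delta_i(r)=0,\Delta_i(s)=1,\Delta_i(t)=0\bigr) &\le p_i\,\P\bigl(N_i(t)-N_i(r)\ge 2\bigr),\\
\P\bigl(\Delta_i(r)=1,\Delta_i(s)=0,\Delta_i(t)=1\bigr) &\le p_i^2\,\P\bigl(N_i(t)-N_i(r)\ge 2\bigr),
\end{align*}
which peels off the $p_i$ and $p_i^2$ factors appearing in \eqref{e:triple.1} and \eqref{e:triple.2}.

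The remaining task is to bound $\P(N_i(t)-N_i(r)\ge 2)$ by a quantity of order $(t-r)^{1+\gamma}$. The stationarity of the increments of $N_i$ (noted just after \eqref{e:stat.renewal.proc}) reduces this to $\P(D^{(i)}+Z_2^{(i)}\le t-r)$, which I plan to write as a convolution against the equilibrium density $(1-G_i(y))/\mu_i$ from \eqref{e:equilibrium} and dominate crudely by $(t-r)\,G_i(t-r)/\mu_i$ using the monotonicity of $G_i$ and the trivial bound $1-G_i(y)\le 1$. The regularity assumption \eqref{e:cond.regularity} (applied with $y=0$ and $h=t-r\le 1$) gives $G_i(t-r)\le c(t-r)^\gamma$, while the standing assumption \eqref{e:assumption.Gi} together with $\mu_i = \int_0^\infty (1-G_i(y))\,dy \ge a\bigl(1-G_i(a)\bigr) \ge a/2$ supplies the constant $2/a$. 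Multiplying everything out delivers exactly \eqref{e:triple.1} and \eqref{e:triple.2}. No step is genuinely difficult; the only point requiring care is confirming that on the two-renewal event the three active Bernoullis are indeed distinct, which is precisely the content of the first paragraph.
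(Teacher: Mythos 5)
Your proof is correct, but it takes a genuinely different route from the paper's. The paper conditions on the middle event $\{\Delta_i(s)=1\}$ to peel off the factor $p_i$, and then bounds the remaining conditional probability by the joint law of the age $A_i(s)$ and residual lifetime $R_i(s)$ of the equilibrium renewal process, namely $\frac{1}{\mu_i}\int_0^{s-r}\bigl(G_i(y+t-s)-G_i(y)\bigr)\,dy \le \frac{2c}{a}(s-r)(t-s)^\gamma$. You instead condition on the entire renewal process, note that the alternation pattern forces distinct active indices $J_r<J_s<J_t$ and hence at least two renewals in $(r,t]$, peel off $p_i$ (resp.\ $p_i^2$) from the now-independent Bernoullis, and bound $\P(N_i(t)-N_i(r)\ge 2)$ via stationarity and a crude convolution estimate $\P(D^{(i)}+Z_2^{(i)}\le u)\le uG_i(u)/\mu_i$. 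Your bound is slightly coarser, $(t-r)^{1+\gamma}$ in place of $(s-r)(t-s)^\gamma$, but still lands on the same constant $2c/a$; your argument is arguably more elementary in that it avoids the age/residual-life joint density and uses the H\"older assumption \eqref{e:cond.regularity} only at $y=0$, whereas the paper exploits the full uniform increment bound. One small caution in your write-up: the phrase ``on the two-renewal event'' should be understood as the event that there is a renewal in $(r,s]$ and a renewal in $(s,t]$ (which is what the sign pattern actually forces), not merely $N_i(t)-N_i(r)\ge 2$, since the latter alone does not guarantee $J_r<J_s<J_t$; you do state the correct version in your first paragraph, so this is a matter of phrasing rather than substance.
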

\begin{proof}
  Rewrite \eqref{e:triple.1} as
$$
p_i  \P\bigl( \Delta_i(r)=0,   \Delta_i(t)=0 \, \big|\, \Delta_i(s)=1
\bigr) \leq p_i \P\bigl( A_i(s) \leq s-r, R_i(s) \leq t-s\bigr),
$$
where $A_i$ and $R_i$ are respectively, the age and the residual lifetime
of a renewal process \eqref{e:stat.renewal.proc} with the interarrival distribution
$G_i$. It then follows from standard calculation in renewal theory (see e.g., \cite{resnick:1992}) that 
\begin{align*}
\P\bigl( A_i(s)\leq s-r, R_i(s)\leq t-s\bigr) &= \P(r \le S_{N_i(s)}^{(i)}, \, S_{N_i(s)+1}^{(i)} \le t)\\
&= \frac{1}{\mu_i} \int_0^{s-r} \big( G_i(y+t-s)-G_i(y)  \big)dy \leq \frac{2c}{a}(s-r)(t-s)^\gamma. 
\end{align*}
The last inequality comes from \eqref{e:assumption.Gi} and
\eqref{e:cond.regularity}. The argument for \eqref{e:triple.2} is
similar; since the process $\Delta_i$ is now required to be ``on" in
two distinct 
time intervals, $p_i$ in \eqref{e:triple.1} is replaced by $p_i^2$.
\end{proof}  

Recall that the probabilities in $\bp$ for the dynamic multi-parameter
simplicial complex $X([n],\bp; t)$ may depend on $n$. In the
sequel, following  \cite{costa:farber:2017}, we ``couple'' $\bp$ with
$n$ in a particular way: we set  $p_i =n^{-\alpha_i}$, $\alpha_i \in
[0,\infty]$  for $i=1,2,\ldots$. Accordingly,  we can work with an
infinite sequence $\balpha = (\alpha_1,\alpha_2,\dots)$, independent
of $n$, to control the rates at which the entries in $\bp$ decay. 
Below, we introduce some additional terms and notation, which we try to keep as consistent as possible with those in
\cite{costa:farber:2017}.

Let 
$$
\psi_j(\balpha) = \sum_{i=1}^j \binom{j}{i} \al_i, \ \ \ j\geq 1. 
$$
By convention, we set $\binom{j}{i}=0$ whenever $j < i$. Note that
$\psi_j(\balpha)$ is non-decreasing in $j$, i.e., $\psi_i (\balpha) \leq
\psi_j(\balpha)$ for each $\balpha$ and $i \le j$. We also let 
$$
\tau_j (\balpha) := j+1 - \sum_{i=1}^j \psi_i(\balpha) = j+1 - \sum_{i=1}^j\binom{j+1}{i+1}\al_i, \ \ 1 \le j \le n-1. 
$$
Additionally, we consider the following sets of parameters: 
$$
\D_j := \big\{ \balpha: \psi_j(\balpha) < 1 < \psi_{j+1}(\balpha)
\big\} 
$$
for $j\geq 1$ and $\D_0 := \{ \balpha:  \psi_1(\balpha)>1 \}$. 
 
Recalling the notation $q = q(\balpha)= \min \{ i \ge 1:\al_i>0 \}$ in
\eqref{e:assumption.Gi}, note that 
$$
\psi_j(\balpha) = 0, \ \ 
\tau_j(\balpha) = j+1, \ \ j=1,\dots,q-1. 
$$
Importantly, if $\balpha \in \D_k$ for some $k\ge q$, then 
$$
0< \psi_q(\balpha) < \dots < \psi_k(\balpha) < 1 < \psi_{k+1}(\balpha) < \dots,
$$
so that, 
$$
q = \tau_{q-1}(\balpha) <  \tau_{q}(\balpha) < \dots < \tau_k(\balpha) > \tau_{k+1}(\balpha) > \dots. 
$$
In this case, the index $k$ is referred to as \textit{the critical
  dimension}. Note that $\tau_j(\balpha)$, $j \ge k+1,$ can be
negative.  Observe also that, for $j >  k$,
\begin{align} \label{e:simple.lemma}
\tau_j(\balpha) -   ( \tau_{j+1}(\balpha) + \al_{j+1}) =& - 1 + \sum_{i=1}^j
  \bigg( \binom{j+2}{i+1}-\binom{j+1}{i+1} \bigg)\al_i  \\
=&   - 1 + \sum_{i=1}^j
  \binom{j+1}{i}\al_i >   - 1+\psi_j(\balpha) >0.    \notag 
\end{align} 

\section{Limit theorems for the face counts} \label{sec:face.counts} 

We consider the dynamic multi-parameter simplicial complex 
$\big( X([n], \bp; t), \, t \ge 0\big)$ constructed in the previous
section. Our basic assumption from now on will be that
\begin{equation} \label{e:basic.ass}
\balpha \in \D_k \ \text{for some $k\geq q$. }
\end{equation} 
Let  $\beta_{j,n}(t) := \beta_{j,n} \big( X([n], \bp; t) \big)$ 
be the $j$th (reduced) Betti number of the complex at time $t$. Note that $\bigl(
\beta_{j,n}(t), \, t\geq 0\bigr)$ is a stationary process. We will often use $\beta_{j, n}$ to mean 
$\beta_{j,n}(0)$. Similarly, we let $\chi_n(t)$
denote the Euler characteristic of the complex at time $t$. Then, 
$\bigl( \chi_{n}(t), \, t\geq 0\bigr)$ also is a stationary process, and $\chi_n$ will be used to denote $\chi_n := \chi_n(0)$. Recall that our goal is to establish functional strong
laws of large numbers (SLLN) and functional central limit theorems
(FCLT) for the Euler characteristic and the Betti number in the critical dimension $k$ of the dynamic multi-parameter simplicial
complex. This section is of preparatory nature and deals with the face counts
of the complex.

We write the face counts in dimension $j$ as  
\begin{align}  \label{e:k.face.counts}
f_{j,n}(t) &= \sum_{\sigma \subset [n], \, |\sigma| = j+1} \one \{
             \sigma \text{ forms a } j\text{-face in } X([n],\bp; t)
             \} =: \sum_{\sigma \subset [n],\,  |\sigma| = j+1}
             \xi_\sigma(t), \ \ t \ge 0. 
\end{align} 
Once again, let $\xi_\sigma := \xi_\sigma(0)$. As in \cite{kahle:meckes:2013} and \cite{thoppe:yogeshwaran:adler:2016},
we analyze the face counts first, and then relate them to the Euler
characteristic and the Betti numbers through the relations  
\begin{equation}  \label{e:EC.faces}
\chi_n (t) := \sum_{j=0}^{n-1} (-1)^j f_{j,n}(t), \ \  t\ge 0, 
\end{equation} 
and 
\begin{equation}  \label{e:Euler.characteristic}
\chi_n (t) := 1+ \sum_{j=0}^{n-1} (-1)^j\beta_{j,n}(t), \ \  t\ge 0. 
\end{equation}

We start with the asymptotic behaviour of the expected value and the covariances of the face counts. Note that not all results below
require the assumption \eqref{e:basic.ass}.
\begin{proposition} \label{p:moment.face.count}
For any $j\ge 1$, we have
$$
\E (f_{j,n}) \sim \frac{n^{\tau_j(\balpha)}}{(j+1)!}, \ \ n\to\infty. 
$$
Furthermore, for $j \ge q$ and $0 \le s \le t < \infty$, we have
\begin{align}
  \Cov \big( f_{j,n}(t), & f_{j,n}(s) \big)  \label{e:covfj} \\
& \sim \frac{n^{2\tau_j(\balpha)-\tau_q(\balpha)}}{(q+1)! \big( (j-q)!
  \big)^2}\, \big( 1-(G_q)_e(t-s) \big)   
   \vee\frac{n^{\tau_j(\balpha)}}{(j+1)!}\,  \prod_{i=q}^j \big(
 1-(1-p_i)(G_i)_e(t-s) \big)^{\binom{j+1}{i+1}} \notag 
\end{align}
as 
$ n\to\infty$, 
where $a\vee b = \max\{ a,b \}$ for $a, b \in \bbr$. 
In particular, if \eqref{e:basic.ass} holds, then 
\begin{equation}\label{e:varfk}
\Cov \big( f_{k,n}(t), f_{k,n}(s) \big) \sim \frac{n^{2\tau_k(\balpha)-\tau_q(\balpha)}}{(q+1)! \big( (k-q)! \big)^2}\, \big( 1-( G_q)_e(t-s) \big), \ \ n\to\infty. 
\end{equation}
\end{proposition}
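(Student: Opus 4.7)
The plan is to exploit independence of $\{\Delta_{\ell,B}\}$ across distinct $B$, together with stationarity and Lemma~\ref{l:cond.prob.Delta}(i). Writing $\xi_\sigma = \prod_{\ell=1}^{j}\prod_{B\subseteq\sigma,\,|B|=\ell+1}\Delta_{\ell,B}(0)$, independence yields $\E\xi_\sigma=\prod_{\ell=1}^{j}p_\ell^{\binom{j+1}{\ell+1}}=n^{-\psi_j(\balpha)}$, and summing over the $\binom{n}{j+1}\sim n^{j+1}/(j+1)!$ choices of $\sigma$ gives the first-moment asymptotic.

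For the covariance I partition pairs $(\sigma,\sigma')$ by $m:=|\sigma\cap\sigma'|$. Each $B$ of size $\ell+1$ either lies in $\sigma\setminus\sigma'$, $\sigma'\setminus\sigma$, or $\sigma\cap\sigma'$; independence across $B$'s and Lemma~\ref{l:cond.prob.Delta}(i) applied to the shared $B$'s yield
\begin{equation*}
\P\bigl(\xi_\sigma(t)=1,\xi_{\sigma'}(s)=1\bigr)=\prod_{\ell=1}^{j}p_\ell^{2\binom{j+1}{\ell+1}-\binom{m}{\ell+1}}\prod_{\ell=1}^{j}\bigl(1-(1-p_\ell)(G_\ell)_e(t-s)\bigr)^{\binom{m}{\ell+1}}.
\end{equation*}
For $m\le q$, each shared $B$ satisfies $\ell\le m-1\le q-1$, so $p_\ell=1$ and the per-pair covariance vanishes. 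The number of pairs with a prescribed $m$ is $\binom{n}{m}\binom{n-m}{j+1-m}^2\sim n^{2(j+1)-m}/[m!((j+1-m)!)^2]$. Using the identity $\sum_{\ell=1}^{m-1}\binom{m}{\ell+1}\alpha_\ell = m-\tau_{m-1}(\balpha)$ to tally powers of $n$, the aggregate contribution of pairs with intersection size $m\in\{q+1,\ldots,j\}$ is of order $n^{2\tau_j(\balpha)-\tau_{m-1}(\balpha)}$, while the $m=j+1$ (i.e., $\sigma=\sigma'$) contribution is of order $n^{\tau_j(\balpha)}$.

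Two values of $m$ supply the leading terms. For $m=q+1$ the only nontrivial shared face is the $q$-face $\sigma\cap\sigma'$; factoring out $\prod_\ell p_\ell^{2\binom{j+1}{\ell+1}}$ gives a per-pair covariance asymptotic to $\prod_\ell p_\ell^{2\binom{j+1}{\ell+1}}\cdot p_q^{-1}(1-(G_q)_e(t-s))$, and combining this with the pair count together with $\tau_q(\balpha)=q+1-\alpha_q$ produces the first term of \eqref{e:covfj}. For $m=j+1$ the per-pair covariance equals $\prod_\ell p_\ell^{\binom{j+1}{\ell+1}}\bigl[\prod_\ell(1-(1-p_\ell)(G_\ell)_e(t-s))^{\binom{j+1}{\ell+1}}-\prod_\ell p_\ell^{\binom{j+1}{\ell+1}}\bigr]$, and since $p_\ell\to 0$ for $\ell\ge q$ the negative piece is of smaller order; multiplying by $\binom{n}{j+1}\sim n^{j+1}/(j+1)!$ gives the second term. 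For intermediate $m\in\{q+2,\ldots,j\}$, the standing assumption $\balpha\in\D_k$ makes $i\mapsto\tau_i(\balpha)$ strictly unimodal with peak at $i=k$, hence $\tau_{m-1}(\balpha)\ge\min(\tau_q(\balpha),\tau_j(\balpha))$; each such contribution is therefore dominated by one of the two retained terms.

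The special case \eqref{e:varfk} follows by taking $j=k$: strict unimodality gives $\tau_k(\balpha)>\tau_q(\balpha)$, so $2\tau_k(\balpha)-\tau_q(\balpha)>\tau_k(\balpha)$ and the first term in \eqref{e:covfj} absorbs the second. The main obstacle is the intermediate-$m$ bookkeeping---tracking the $n$-exponents uniformly in $m$, converting $\sum_\ell\binom{m}{\ell+1}\alpha_\ell$ into a $\tau$-difference, and then invoking unimodality to conclude that only $m=q+1$ and $m=j+1$ contribute to the leading order.
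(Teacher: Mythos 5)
Your proof is correct and follows essentially the same route as the paper's, differing only in bookkeeping: you compute per-pair covariances directly by cases on $m=|\sigma\cap\sigma'|$ (observing they vanish for $m\le q$), whereas the paper expands $\E(f_{j,n}(t)f_{j,n}(s))$ and then subtracts the matching expansion of $(\E f_{j,n})^2$; these are algebraically the same. One slip to fix: $\E\xi_\sigma=\prod_{\ell=1}^j p_\ell^{\binom{j+1}{\ell+1}}=n^{-\sum_{\ell\le j}\binom{j+1}{\ell+1}\alpha_\ell}=n^{\tau_j(\balpha)-(j+1)}$, not $n^{-\psi_j(\balpha)}$ (since $\tau_j(\balpha)=j+1-\sum_{i\le j}\psi_i(\balpha)$, the correct exponent is $-\sum_{i\le j}\psi_i(\balpha)$, and this equals $-\psi_j(\balpha)$ only for $j\le q$); your later use of the identity $\sum_{\ell\le m-1}\binom{m}{\ell+1}\alpha_\ell=m-\tau_{m-1}(\balpha)$ is the correct version, so the rest of the argument is unaffected.
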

\begin{remark}  \label{rem:non-random}
For $j<q$, $f_{j,n}(t)$ is, of course, nonrandom, 
so in this case, $\Cov\big(f_{j,n}(t), f_{j,n}(s) \big)=0$. 
\end{remark}
\begin{proof}
The asymptotics of the mean face count is easy to obtain. In fact, 
\begin{equation} \label{e:calculation.expectation}
\E(f_{j,n}) = \binom{n}{j+1} \prod_{i=q}^j p_i^{\binom{j+1}{i+1}} 
= \binom{n}{j+1} n^{\tau_j(\balpha) - (j+1)} \sim \frac{n^{\tau_j(\balpha)}}{(j+1)!} \ \ \text{as } n\to\infty. 
\end{equation}
For the covariances, we write
\begin{align*}
\E\big( f_{j,n}(t) f_{j,n}(s) \big) &= \sum_{\ell=0}^{j+1} \E \bigg( \sum_{\substack{\sigma \subset [n] \\ |\sigma|=j+1}} \sum_{\substack{\tau \subset [n] \\ |\tau|=j+1, \, |\sigma \cap \tau|=\ell}} \xi_{\sigma}(t)\xi_{\tau}(s)\bigg) \\
&=\sum_{\ell=0}^{j+1} \binom{n}{j+1} \binom{j+1}{\ell} \binom{n-j-1}{j+1-\ell} \E \big( \xi_\sigma(t)\xi_\tau(s) \big) \one \big\{ |\sigma \cap \tau|=\ell \big\}. 
\end{align*}
If $\ell\in \{ 0,1,\dots,q \}$, all faces of $\sigma\cap \tau$ exist
with probability one; thus, 
$$
\E \big( \xi_\sigma(t)\xi_\tau(s) \big)\, \one \big\{ |\sigma\cap\tau|=\ell \big\} = \bigg( \prod_{i=q}^j p_i^{\binom{j+1}{i+1}} \bigg)^2=n^{2\tau_j(\balpha)-2(j+1)}. 
$$
On the other hand, if $\ell\in\{  q+1, \dots, j+1\}$, we have 
\begin{align*}
\E \big( \xi_\sigma(t)\xi_\tau(s) \big) \one \big\{ |\sigma\cap\tau|=\ell \big\} &= \prod_{i=q}^j p_i^{\binom{j+1}{i+1}} \times \prod_{i=q}^j \P \big( \Delta_i(t)=1\, \big|\, \Delta_i(s)=1 \big)^{\binom{\ell}{i+1}} \times \prod_{i=q}^j p_i^{\binom{j+1}{i+1}-\binom{\ell}{i+1}} \notag\\
&=: A_n \times B_n \times C_n. 
\end{align*}
Here, $A_n$ is the probability of $\tau$ spanning a $j$-face at time
$s$, while $B_n$ is the conditional probability that all faces of
$\sigma\cap \tau$ are present  at time $t$, given that $\tau$ spans a
$j$-face at time $s$. Finally, $C_n$ is the conditional probability of $\sigma$
forming a $j$-face at time $t$, given that all faces of $\sigma \cap
\tau$ are present at time $t$.  
Calculating the product of three terms via Lemma \ref{l:cond.prob.Delta}, 
$$
A_n \times B_n \times C_n = n^{2\tau_j(\balpha)-\tau_{\ell-1}(\balpha)-2(j+1)+\ell} \prod_{i=q}^j \big( 1-(1-p_i)(G_i)_e(t-s) \big)^{\binom{\ell}{i+1}} . 
$$

By the stationarity of face counts, together with \eqref{e:calculation.expectation}, we have that
\begin{align*}
\E\big( f_{j,n}(t) \big) \E\big( f_{j,n}(s) \big) &=  \big( \E(f_{j,n}) \big)^2 =  \binom{n}{j+1}^2 n^{2\tau_j(\balpha) - 2(j+1)} \\
&= \sum_{\ell = 0}^{j+1}  \binom{n}{j+1} \binom{j+1}{\ell} \binom{n-j-1}{j+1-\ell} n^{2\tau_j(\balpha) - 2(j+1)}. 
\end{align*}
Combining all these results yields 
\begin{align*}
\Cov\big( f_{j,n}(t), f_{j,n}(s)\big) &= \sum_{\ell=q+1}^{j+1} \binom{n}{j+1}\binom{j+1}{\ell}\binom{n-j-1}{j+1-\ell} \\
&\quad \times n^{2\tau_j(\balpha)-\tau_{\ell-1}(\balpha)-2(j+1)+\ell} \Big\{  \prod_{i=q}^j \big( 1-(1-p_i)(G_i)_e(t-s) \big)^{\binom{\ell}{i+1}} - n^{\tau_{\ell-1}(\balpha)-\ell} \Big\} \\
&\sim \sum_{\ell=q+1}^{j+1}  \frac{n^{2\tau_j(\balpha)-\tau_{\ell-1}(\balpha)}}{\ell ! \big( (j+1-\ell)! \big)^2}\,  \prod_{i=q}^{\ell-1} \big(  1-(1-p_i)(G_i)_e(t-s) \big)^{\binom{\ell}{i+1}}\\
&\sim \frac{n^{2\tau_j(\balpha)-\tau_q(\balpha)}}{(q+1)! \big( (j-q)! \big)^2}\, \big( 1-(G_q)_e(t-s) \big) \\
&\qquad \qquad \qquad \qquad \vee\frac{n^{\tau_j(\balpha)}}{(j+1)!}\,  \prod_{i=q}^j \big( 1-(1-p_i)(G_i)_e(t-s) \big)^{\binom{j+1}{i+1}}, \ \ \ n\to\infty, 
\end{align*}
where the last equivalence comes from the fact that $\bigl(
\tau_\ell(\balpha), \, \ell\geq q\bigr)$ is a sequence that increases
for $\ell\leq k$ and then decreases.  For the derivation of \eqref{e:varfk}, use the fact that $2\tau_k(\balpha) - \tau_q(\balpha) \ge \tau_k(\balpha)$. 
\end{proof}

\begin{remark} \label{rk:critical}
It follows immediately from the proposition that, under the assumption
\eqref{e:basic.ass}, for every $j\not= k$, 
\begin{equation} \label{e:critical.dom}
\lim_{n\to\infty} \frac{\E (f_{j,n})}{\E (f_{k,n})} =
\lim_{n\to\infty} \frac{\Var (f_{j,n})}{\Var (f_{k,n})} =0.
\end{equation} 
That is, the face counts in the critical dimension dominate those in
the other dimensions both in their means and their variances. 
\end{remark}

The following corollary will be useful in the sequel. Since time parameter
plays no role due to stationarity, we remove it to simplify the notation.  Denote
\begin{equation} \label{e:M.alpha}
M(\balpha)= \min \big\{ i: \tau_i(\balpha)<0 \big\};
\end{equation}
this is a finite number since $\tau_i(\balpha) \to -\infty$ as $i\to\infty$. 
\begin{corollary} \label{cor:neg.tau}
As $n\to\infty$,
$$
\sum_{j=M(\balpha)}^\infty \E(f_{j,n}) \to 0\,.
$$
\end{corollary}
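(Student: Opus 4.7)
The plan is to obtain a single uniform upper bound on $\E(f_{j,n})$ via \eqref{e:calculation.expectation} and then split the target sum into a finite head that vanishes by Proposition~\ref{p:moment.face.count} and a tail that is uniformly super-polynomially small in $n$. First I would record the elementary estimate
$$
\E(f_{j,n}) = \binom{n}{j+1} n^{\tau_j(\balpha)-(j+1)} \leq \frac{n^{\tau_j(\balpha)}}{(j+1)!}, \qquad 1 \leq j \leq n-1,
$$
together with $\E(f_{j,n}) = 0$ for $j \geq n$.

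Next I would observe that $\tau_j(\balpha) \to -\infty$ faster than linearly in $j$. Indeed, since $\alpha_q > 0$ and $q \geq 1$,
$$
\tau_j(\balpha) = (j+1) - \sum_{i=1}^j \binom{j+1}{i+1}\alpha_i \leq (j+1) - \alpha_q \binom{j+1}{q+1},
$$
and $\binom{j+1}{q+1}$ is a polynomial in $j$ of degree $q+1 \geq 2$. In particular, there exists $j_0 = j_0(\balpha) \geq M(\balpha)$ with $\tau_j(\balpha) \leq -(j+1)$ for every $j \geq j_0$. Splitting the sum at $j_0$, the head
$$
\sum_{j=M(\balpha)}^{j_0-1} \E(f_{j,n})
$$
is a fixed finite sum in which each term is $O(n^{\tau_j(\balpha)})$ with $\tau_j(\balpha)<0$ (by the definition of $M(\balpha)$), hence vanishes as $n \to \infty$. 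For the tail, the bound $\tau_j(\balpha) \leq -(j+1)$ gives
$$
\sum_{j=j_0}^{n-1} \E(f_{j,n}) \leq \sum_{j=j_0}^\infty \frac{n^{-(j+1)}}{(j+1)!} \leq \frac{e}{n^{j_0+1}} \longrightarrow 0.
$$

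There is no real obstacle here; the argument is essentially bookkeeping built on top of Proposition~\ref{p:moment.face.count}. The only conceptual point to notice is that although the sum formally runs over infinitely many dimensions (capped at $n-1$), the super-linear growth of $\alpha_q \binom{j+1}{q+1}$ in $j$ forces $\tau_j(\balpha)$ to decay quickly enough that the tail is summable uniformly in $n$.
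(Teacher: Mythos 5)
Your argument is correct and rests on the same two ingredients as the paper's proof: the bound $\E(f_{j,n}) \le n^{\tau_j(\balpha)}/(j+1)!$ from \eqref{e:calculation.expectation}, and the observation that $\tau_j(\balpha)$ decays super-linearly in $j$ because $\alpha_q>0$ forces $\tau_j(\balpha)\le (j+1)-\alpha_q\binom{j+1}{q+1}$. The paper packages this as a single uniform bound $\E(f_{j,n})\le n^{-\beta(j+1)}$ with $\beta=\inf_{j\ge M(\balpha)}[-\tau_j(\balpha)/(j+1)]>0$ and sums the resulting geometric series, whereas you split at a finite $j_0$ and handle the head by $\tau_j(\balpha)<0$ and the tail by $\tau_j(\balpha)\le -(j+1)$; this is a cosmetic repackaging of the same estimate, not a different route.
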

\begin{proof}
  It follows from \eqref{e:calculation.expectation} that 
$$
\E(f_{j,n}) \le n^{\tau_j(\balpha)}   \le \left(\frac{1}{n^\beta}\right)^{j+1},
$$
where 
$$
\beta  = \inf_{j \ge M(\balpha)} \bigl[-\tau_j(\balpha)/(j+1)\bigr].
$$
Note that $\beta>0$, since $\tau_j(\balpha) < 0$ for all $j \ge M(\balpha)$, and 
$$
\lim_{j\to\infty} \frac{-\tau_j(\balpha)}{j+1} = 
\lim_{j\to\infty} \Big\{  \sum_{i=1}^j\binom{j}{i} \frac{\al_i}{i+1}-1 \Big\} \ge \lim_{j\to\infty} \Big\{ \binom{j}{q} \frac{\al_q}{q+1} - 1 \Big\} = \infty. 
$$
Hence,
$$
\sum_{j=M(\balpha)}^\infty \E(f_{j,n}) \le \sum_{j=M(\balpha)}^\infty \left( \frac{1}{n^\beta} \right)^{j+1} \to 0, \ \ \ n\to\infty,
$$
as desired.
\end{proof}  

As stated below, the face counts in the critical dimension $k$ turn out to satisfy a
functional central limit theorem. The limit turns out to be a
stationary Gaussian process whose covariance function is given by the limit in   \eqref{e:varfk}. Specifically,
let $\big( Z_k(t), \, t\ge 0\big)$ be a zero-mean stationary Gaussian process
with covariance function  
\begin{equation}  \label{e:covariance.func}
R_k(t) = \E \big( Z_k(t)Z_k(0) \big) = 1-(G_q)_e(t), \ \ t \ge 0. 
\end{equation}
The basic sample path properties of this process are described in the 
next proposition. 
\begin{proposition}  \label{p:holder.continuous}
The process $Z_k$ admits a continuous version, whose sample paths are $\delta$-H\"older continuous~for any $\delta\in (0,1/2)$. 
\end{proposition}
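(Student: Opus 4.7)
The plan is to apply Kolmogorov's continuity criterion to the Gaussian process $Z_k$. The two main ingredients will be a Lipschitz bound on the incremental variance of $Z_k$ and the standard moment comparison for Gaussian random variables.

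First I would compute the incremental variance. Since $Z_k$ is stationary with covariance $R_k(t) = 1 - (G_q)_e(t)$ and $R_k(0) = 1$, we have
\[
\E\bigl[(Z_k(t) - Z_k(s))^2\bigr] = 2\bigl(R_k(0) - R_k(|t-s|)\bigr) = 2(G_q)_e(|t-s|).
\]
By the definition of the equilibrium distribution \eqref{e:equilibrium}, the function $(G_q)_e$ has derivative $\mu_q^{-1}(1-G_q(\cdot))$, which is bounded by $\mu_q^{-1}$. Therefore $(G_q)_e$ is Lipschitz, and
\[
\E\bigl[(Z_k(t) - Z_k(s))^2\bigr] \le \frac{2}{\mu_q}|t-s|.
\]

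Next I would lift this second-moment bound to arbitrarily high moments using Gaussianity. For any positive integer $m$, there is a constant $C_m>0$ such that
\[
\E\bigl[(Z_k(t) - Z_k(s))^{2m}\bigr] = C_m \bigl(\E\bigl[(Z_k(t) - Z_k(s))^2\bigr]\bigr)^m \le C_m \left(\frac{2}{\mu_q}\right)^{m} |t-s|^{m}.
\]
By Kolmogorov's continuity theorem (restricted to any compact interval $[0,T]$), this implies the existence of a modification of $Z_k$ whose sample paths are $\delta$-Hölder continuous on $[0,T]$ for every $\delta < (m-1)/(2m) = 1/2 - 1/(2m)$. Since $m$ is arbitrary, any $\delta \in (0,1/2)$ is allowed, and patching the modifications over an increasing sequence of intervals (using uniqueness in law on $[0,T]$) gives the continuous modification on $[0,\infty)$ with the stated Hölder regularity.

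There is no serious obstacle here; the only point worth checking is the Lipschitz bound on $(G_q)_e$ near the origin, which follows immediately from $0 \le 1 - G_q(y) \le 1$ for all $y\ge 0$. The standing hypothesis \eqref{e:assumption.Gi} is not even required for this argument; positivity of $\mu_q$, which is built into the model, suffices.
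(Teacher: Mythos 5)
Your proof is correct and takes essentially the same route as the paper: both bound $\E[(Z_k(t)-Z_k(s))^2] = 2(G_q)_e(|t-s|) \le (2/\mu_q)|t-s|$ via the Lipschitz property of the equilibrium distribution and then invoke the Kolmogorov continuity criterion. The paper states this in two lines, leaving the Gaussian moment-lifting step implicit; you have simply filled in that standard detail.
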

\begin{proof}
Since  $Z_k$ is a stationary Gaussian process and 
$$
\E \big[ \big( Z_k(t)-Z_k(s) \big)^2  \big]  = 2(G_q)_e(|t-s|) \le
\frac{2}{\mu_q} |t-s|, 
$$
the claim follows from the Kolmogorov continuity criterion.  
\end{proof}

The statement below is a FCLT for the face counts in the critical
dimension $k$.  We view $f_{k,n}(\cdot)$ as a (piecewise constant)
random element of  
$D[0,\infty)$, the space of right continuous functions with left
limits, which is equipped with the Skorohod
$J_1$-topology. 
\begin{proposition}  \label{p:clt.face.counts}
Assume \eqref{e:basic.ass}.  Then, as $n\to\infty$,
\begin{equation}  \label{e:clt.face.counts}
\left( \frac{f_{k,n}(t)-\E(f_{k,n})}{\sqrt{\Var (f_{k,n})}}, \, t\geq 0\right)
\Rightarrow
\bigl( Z_k(t), \, t\geq 0\bigr)   
\end{equation}
in the sense of convergence of the finite-dimensional distributions.
If  the assumption  \eqref{e:cond.regularity}
is satisfied 
then \eqref{e:clt.face.counts} also holds in the sense
of weak convergence in the $J_1$-topology on $D[0,\infty)$.
\end{proposition}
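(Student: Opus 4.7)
My plan is to establish finite-dimensional convergence by a linear-projection argument and, under the additional assumption \eqref{e:cond.regularity}, to upgrade to weak convergence in $D[0,\infty)$ via a Billingsley-type moment bound on increments. Writing
$$\xi_\sigma(t) \;=\; \prod_{i=q}^{k} \prod_{B\subset\sigma,\,|B|=i+1}\bigl(p_i + \eta_{i,B}(t)\bigr),\qquad \eta_{i,B} := \Delta_{i,B}-p_i,$$
and expanding around the means, the dominant contribution to $\tilde f_{k,n}(t) := f_{k,n}(t)-\E f_{k,n}$ comes from the terms linear in a single $\eta_{q,B}$. Summing over $\sigma\in\W_k$ and swapping sums gives
$$L_n(t) \;:=\; c_n \sum_{B\in\W_q}\bigl(\Delta_{q,B}(t)-p_q\bigr), \qquad c_n \;:=\; \binom{n-q-1}{k-q}\, p_q^{\binom{k+1}{q+1}-1}\prod_{i=q+1}^{k} p_i^{\binom{k+1}{i+1}}.$$
Since $(\Delta_{q,B})_{B\in\W_q}$ is a family of iid stationary processes with covariance $p_q(1-p_q)\bigl(1-(G_q)_e(|t-s|)\bigr)$, direct computation yields $\Var L_n(t)/\Var f_{k,n}\to 1$ and $\Cov(L_n(t),L_n(s))/\Var f_{k,n}\to R_k(|t-s|)$. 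The residual $\tilde f_{k,n} - L_n$ collects the expansion terms that are either linear in some $\eta_{i,B}$ with $i>q$ or non-linear in the $\eta$-family; by inspecting the $\ell$-decomposition in the proof of Proposition \ref{p:moment.face.count}, each such contribution has variance of strictly lower order than $\Var f_{k,n}$, since $L_n$ already encodes the dominant $\ell=q+1$ branch. The classical multivariate CLT for iid sums (via the Cramér-Wold device), applied to $L_n/\sqrt{\Var f_{k,n}}$, then yields the finite-dimensional convergence.

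For tightness in the Skorohod $J_1$-topology on $D[0,T]$ under \eqref{e:cond.regularity}, I would establish a Chentsov/Billingsley-type fourth-moment estimate: for some $\gamma'>0$ and $C_T<\infty$ independent of $n$,
$$\E\Bigl[\bigl(\widetilde X_n(t)-\widetilde X_n(s)\bigr)^2 \bigl(\widetilde X_n(u)-\widetilde X_n(t)\bigr)^2\Bigr] \;\le\; C_T\,(u-s)^{1+\gamma'}, \quad 0\le s\le t\le u\le T,$$
where $\widetilde X_n := \tilde f_{k,n}/\sqrt{\Var f_{k,n}}$. Expanding the left-hand side as a sum over $4$-tuples $(\sigma_1,\sigma_2,\sigma_3,\sigma_4)\in\W_k^4$, each factor $(\xi_{\sigma_a}(t)-\xi_{\sigma_a}(s))(\xi_{\sigma_a}(u)-\xi_{\sigma_a}(t))$ is non-zero only when some $\Delta_{i,B}$ with $B\subset\sigma_a$ realizes an off-on-off or on-off-on pattern on $\{s,t,u\}$, so Lemma \ref{l:triples} combined with \eqref{e:cond.regularity} supplies the required $(u-s)^{1+\gamma}$ factor.

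The main obstacle is the combinatorial bookkeeping in this tightness step: after applying Lemma \ref{l:triples}, one must verify that, uniformly over all intersection patterns among $(\sigma_1,\dots,\sigma_4)$, the resulting contribution is $O\bigl(\Var(f_{k,n})^2\,(u-s)^{1+\gamma'}\bigr)$. This is essentially a four-simplex analog of the $\ell$-decomposition used in Proposition \ref{p:moment.face.count}, made more delicate because one must track simultaneously the intersection structure and the placement of the triple pattern in each $\sigma_a$; it is precisely here that the regularity assumption \eqref{e:cond.regularity} plays its essential role in converting the temporal scale $(u-s)$ into the Hölder exponent needed for $J_1$-tightness.
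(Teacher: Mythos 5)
Your argument for the finite-dimensional convergence is valid but takes a genuinely different route from the paper. The paper applies the central limit theorem of Barbour--Karo\'nski--Ruci\'nski for sums of dissociated random variables: it identifies each $k$-face $\bj$ with the tuple of its $q$-faces, checks dissociation, and then bounds the $L_1$-Wasserstein distance $d_1(\calL_W,\calL_Y)$ by a triple sum over intersection patterns, showing it is $\mathcal O(n^{-\tau_q(\balpha)/2})$. Your proof instead uses a H\'ajek-projection/linearization: writing $\Delta_{i,B}=p_i+\eta_{i,B}$ and extracting the part of $f_{k,n}-\E f_{k,n}$ linear in a single $\eta_{q,B}$, you obtain $L_n(t)=c_n\sum_{B\in\W_q}\eta_{q,B}(t)$, which is an iid sum to which the classical Lindeberg CLT applies. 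This is correct in outline, and after centering it is genuinely an orthogonal projection (every residual term contains either some $\eta_{i,B}$ with $i>q$ or at least two $\eta$-factors, both orthogonal to the $\eta_{q,B}$ family), so $\Var L_n/\Var f_{k,n}\to 1$ does imply $\Var(\tilde f_{k,n}-L_n)=o(\Var f_{k,n})$; this step should be stated, not merely asserted by "inspecting the $\ell$-decomposition." Your approach is more elementary and makes Remark~\ref{rk:whyq} transparent (the limit sees only $G_q$ because the surviving projection is literally a sum over $q$-faces), whereas the paper's Stein-type argument additionally yields an explicit convergence rate. For tightness, your sketch (Billingsley's fourth-moment criterion, triple-pattern events controlled via Lemma~\ref{l:triples} and \eqref{e:cond.regularity}, delicate combinatorics over four-tuples of simplices) is the same strategy as the paper, which likewise defers the bookkeeping to the argument establishing \eqref{e:tightness.Thm13.5.Euler} in Section~\ref{s:proofs.euler}.
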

The proof is deferred to Section \ref{s:proofs.facecounts}.

\begin{remark} \label{rk:whyq}
It is interesting and, initially, unexpected that only the state
change distribution $G_q$ in the lowest nontrivial dimension $q$
contributes to the asymptotics of the face counts in the critical
dimension. This is due to the fact that the ``flipping'' of a $q$-simplex 
from ``on'' to ``off'' or vice versa affects 
the distribution of $k$-simplices more 
than does any flipping in a different dimension.
Note that 
if $G_q$  is exponential  with mean $1/\lambda$, then
$R_k(t)=e^{-\lambda t}$ and $Z_k$ is the 
Ornstein-Uhlenbeck Gaussian process, as in
\cite{thoppe:yogeshwaran:adler:2016}.  

\end{remark}

\section{FCLT for topological invariants} \label{sec:top.invariants} 

In this section, we present the main results of this paper: the
functional SLLN and the FCLT for the Euler characteristic and the
Betti numbers in the critical dimension. We defer the
proofs to Sections \ref{s:proofs.euler} and \ref{s:proofs.betti}. 
 
We start with the strong laws of large numbers.
\begin{theorem}   \label{t:SLLN}
Assume \eqref{e:basic.ass}. Then,  as $n\to\infty$, 
\begin{equation}  \label{e:SLLN.Euler.char}
\left( \frac{\chi_n(t)}{n^{\tau_k(\balpha)}}, \, t\geq 0\right)\to \frac{(-1)^k}{(k+1)!} \ \
\text{a.s.}
\end{equation}
and
\begin{equation}  \label{e:SLLN.betti}
\left(\frac{\beta_{k,n}(t)}{n^{\tau_k(\balpha)}}, \, t\geq 0\right)\to
\frac{1}{(k+1)!} \ \ \text{a.s.} 
\end{equation}
in the $J_1$-topology on $D[0,\infty)$, where the right hand sides of
\eqref{e:SLLN.Euler.char} and \eqref{e:SLLN.betti} are viewed as
constant elements of $D[0,\infty)$. 
\end{theorem}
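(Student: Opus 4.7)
The strategy is to reduce both claims to a functional SLLN for the dominating face count $f_{k,n}(\cdot)/n^{\tau_k(\balpha)}$. For the Betti number, the standard Morse-type sandwich obtained via rank-nullity applied to the boundary operators $\partial_k,\partial_{k+1}$ gives
\[
f_{k,n}(t)-f_{k-1,n}(t)-f_{k+1,n}(t)\;\le\;\beta_{k,n}(t)\;\le\;f_{k,n}(t),
\]
and since $\tau_{k\pm 1}(\balpha)<\tau_k(\balpha)$ (with $f_{k-1,n}$ purely deterministic and of order $n^{k}$ in the sub-case $k-1<q$), dividing by $n^{\tau_k(\balpha)}$ collapses both outer terms to $(k+1)!^{-1}$ uniformly in $t$ once the FSLLN for $f_{k,n}$ is in hand. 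For the Euler characteristic, \eqref{e:EC.faces} gives
\[
\chi_n(t)\;=\;(-1)^{k}f_{k,n}(t)\;+\;\sum_{j\neq k}(-1)^{j}f_{j,n}(t),
\]
where Corollary \ref{cor:neg.tau} (together with a Markov-type bound to control the supremum over $[0,T]$) kills the tail $\sum_{j\geq M(\balpha)}$, and each of the finitely many remaining summands with $q\leq j<M(\balpha)$, $j\neq k$ satisfies $f_{j,n}(t)/n^{\tau_j(\balpha)}=O(1)$ uniformly a.s.\ by the same FSLLN scheme applied to dimension $j$, hence contributes $o(n^{\tau_k(\balpha)})$; the indices $j<q$ are handled trivially since $f_{j,n}$ is deterministic and of order $n^{j+1}\leq n^{q}=o(n^{\tau_k(\balpha)})$.

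The FSLLN for $f_{k,n}$ itself proceeds in two stages. At any fixed $t$---WLOG $t=0$ by stationarity---Proposition \ref{p:moment.face.count} gives $\Var(f_{k,n}(0))/n^{2\tau_k(\balpha)}=O(n^{-\tau_q(\balpha)})$, and assumption \eqref{e:basic.ass} forces $\psi_q(\balpha)=\alpha_q<1$, whence $\tau_q(\balpha)=q+1-\alpha_q>1$; Chebyshev plus Borel--Cantelli therefore deliver pointwise a.s.\ convergence. To upgrade this to uniform convergence on $[0,T]$, I introduce a grid $\{t_i=iT/N_n\}_{i=0}^{N_n}$ with $N_n=\lfloor n^{\delta}\rfloor$ for some fixed $\delta\in(0,\tau_q(\balpha)-1)$ and exploit the coupling
\[
\sum_{|\sigma|=k+1}\inf_{s\in[t_i,t_{i+1}]}\xi_\sigma(s)\;\le\;f_{k,n}(t)\;\le\;\sum_{|\sigma|=k+1}\sup_{s\in[t_i,t_{i+1}]}\xi_\sigma(s),\qquad t\in[t_i,t_{i+1}].
\]
The two sides are exactly face counts of \emph{static} multi-parameter complexes whose generator probabilities are, respectively, $\P(\inf_s\Delta_{i,A}(s)=1)$ and $\P(\sup_s\Delta_{i,A}(s)=1)$. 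By Lemma \ref{l:cond.prob.Delta}(ii) each of these differs from $p_i$ by only a factor $1+O(h_n)$ with $h_n=T/N_n$, so the mean and variance asymptotics of Proposition \ref{p:moment.face.count} transfer to these static face counts with the \emph{same} leading order. A union bound over the $N_n$ sub-intervals combined with Chebyshev then gives a summable tail in $n$ precisely because $\delta<\tau_q(\balpha)-1$ makes $N_n\cdot n^{-\tau_q(\balpha)}=o(n^{-1})$, and Borel--Cantelli closes the argument.

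The main technical difficulty lies in this coupling-plus-discretization step: the grid spacing $h_n=n^{-\delta}$ must be small enough for Lemma \ref{l:cond.prob.Delta}(ii) to force a $1+o(1)$ perturbation of each per-simplex probability (so that the sandwiching static complexes share the correct leading-order mean and variance), yet $N_n$ must not grow so fast as to defeat the Borel--Cantelli summation. The strict inequality $\tau_q(\balpha)>1$, inherited from $\alpha_q<1$, is exactly what makes both constraints simultaneously satisfiable. Once this balance is struck, everything else---verifying the analogs of Proposition \ref{p:moment.face.count} for the static sup/inf face counts, and extending the same discretization scheme to each $f_{j,n}$ with $j\neq k$ appearing in the Euler characteristic and in the Morse bounds---is a routine variation on the same theme.
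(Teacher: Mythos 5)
Your proposal follows essentially the same route as the paper: the Morse inequalities $f_k - f_{k\pm 1}\le\beta_k\le f_k$ for the Betti number, the alternating face-count identity \eqref{e:EC.faces} together with Corollary~\ref{cor:neg.tau} and a Markov bound to kill the high-dimensional tail, stochastic domination of $\sup_t f_j(t)$ and $\inf_t f_j(t)$ by face counts of auxiliary \emph{static} complexes with perturbed parameters (Lemma~\ref{l:cond.prob.Delta}(ii)), and then Chebyshev plus Borel--Cantelli, with the strict inequality $\tau_q(\balpha)>1$ supplying the summability. These are exactly the paper's ingredients.

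The one place you depart is the discretization. You subdivide $[0,T]$ into a growing number $N_n=\lfloor n^\delta\rfloor$ of cells with $\delta\in(0,\tau_q(\balpha)-1)$, so that on each cell the perturbed per-simplex probability is $p_i(1+O(n^{-\delta}))$, and then take a union bound over the $N_n$ cells. The paper instead fixes a single integer $m$ (depending on $\epsilon$ and $T$ but not on $n$), chosen so that the perturbation factor in \eqref{e:restriction.epsilon} is within $1+\epsilon/2$ of $1$, and uses stationarity once to reduce the supremum over $[0,T]$ to one over $[0,T/m]$, paying only the fixed multiplicative cost $m$. The fixed-$m$ choice is simpler and already adequate: since the Chebyshev tail is $O(n^{-\tau_q(\balpha)})$ with $\tau_q(\balpha)>1$, a constant multiple of this is summable and there is no need to push the perturbation to $o(1)$. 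Your shrinking grid buys nothing except a cleaner cut-off index ($M(\tilde\balpha)=M(\balpha)$ for large $n$), and costs you some extra bookkeeping, including the verification that Corollary~\ref{cor:neg.tau} applies to the family of perturbed parameter sequences uniformly in $n$ (the paper handles this with the explicit function $h$ in \eqref{e:alpha.tilde.rate} and the observation that $M(\tilde\balpha)<\infty$). Two small imprecisions: $\sum_{\sigma}\sup_s\xi_\sigma(s)$ is stochastically \emph{dominated} by the static face count $f_j^{(1)}$, not identically distributed with it (the inequality $\sup_s\prod_B\Delta_B(s)\le\prod_B\sup_s\Delta_B(s)$ is strict in general, though for the inf one does get equality because the processes are $\{0,1\}$-valued); and in the Euler-characteristic tail bound the relevant threshold is $M(\tilde\balpha)$, not $M(\balpha)$, since Corollary~\ref{cor:neg.tau} must be applied to the dominating static complex. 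Neither affects the validity of the argument.
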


After stating the functional strong law of large numbers, we proceed,
as it is frequently done, with the functional central limit
theorem. Note the similarity with the corresponding limit theorem for
the face counts in Proposition \ref{p:clt.face.counts}.

\begin{theorem}  \label{t:clt.topological.invariants}
Assume \eqref{e:basic.ass}.  Then, as $n\to\infty$, 
\begin{equation}  \label{e:Euler.characteristic.func.conv}
\left( \frac{\chi_n(t) - \E(\chi_n)}{\sqrt{\Var (f_{k,n})}}, \, t\geq
  0\right)  \Rightarrow \bigl( Z_k(t), \, t\geq 0\bigr)
\end{equation}
and
\begin{equation}  \label{e:betti.func.conv}
\left( \frac{\beta_{k,n}(t) - \E(\beta_{k,n})}{\sqrt{\Var (f_{k,n})}}, \, t\geq 0\right) \Rightarrow
\bigl( Z_k(t), \, t\geq 0\bigr) 
\end{equation}
in the sense of convergence of the finite-dimensional distributions. 
  
In addition, assume \eqref{e:cond.regularity} and
\begin{equation}  \label{e:sharp.drop.k+1}
\tau_k(\balpha)-\frac{\tau_q(\balpha)}{2}>\tau_{k+1}(\balpha). 
\end{equation}
Then, \eqref{e:Euler.characteristic.func.conv} and \eqref{e:betti.func.conv} also hold in the sense of weak convergence in the $J_1$-topology on $D[0,\infty)$. 
\end{theorem}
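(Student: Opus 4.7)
I will reduce both parts of the theorem to Proposition \ref{p:clt.face.counts} by means of the two identities \eqref{e:EC.faces} and \eqref{e:Euler.characteristic}. The Euler characteristic is tackled first, by decomposing it into centred face counts; then $\beta_{k,n}$ is peeled off $\chi_n$ via Euler--Poincar\'e, which requires controlling the non-critical Betti numbers.

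\textbf{Euler characteristic.} By \eqref{e:EC.faces}, and noting that $f_{j,n}$ is deterministic for $j<q$,
\begin{equation*}
\chi_n(t)-\E(\chi_n) \;=\; (-1)^k\bigl[f_{k,n}(t)-\E(f_{k,n})\bigr] \;+\; \sum_{\substack{j\ge q \\ j\ne k}}(-1)^j\bigl[f_{j,n}(t)-\E(f_{j,n})\bigr].
\end{equation*}
Divided by $\sqrt{\Var f_{k,n}}$, the leading term has limit $(-1)^k Z_k\stackrel{d}{=}Z_k$ by Proposition \ref{p:clt.face.counts}. For finite-dimensional convergence, Remark \ref{rk:critical} gives $\Var(f_{j,n})=o(\Var f_{k,n})$ for each fixed $j\ne k$, and Corollary \ref{cor:neg.tau} together with Proposition \ref{p:moment.face.count} handles the tail; Chebyshev then kills the remainder pointwise in $t$. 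For $J_1$ tightness I upgrade this to
\begin{equation*}
\sup_{0\le t\le T}\frac{|f_{j,n}(t)-\E(f_{j,n})|}{\sqrt{\Var f_{k,n}}} \stackrel{P}{\to} 0 \quad \text{for every } j\ge q,\ j\ne k.
\end{equation*}
For $j\ge M(\balpha)$ this follows from a sup-version of Markov's inequality using Lemma \ref{l:cond.prob.Delta}(ii) on $[0,T]$ together with Corollary \ref{cor:neg.tau}. For the finitely many indices $q\le j<M(\balpha)$ with $j\ne k$, I plan a dyadic maximal inequality: compute second moments on a grid whose fineness is tuned to the target variance, then bound the between-grid oscillations via the H\"older-type estimates of Lemma \ref{l:triples}, which is exactly where \eqref{e:cond.regularity} is used. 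Since addition is continuous in $J_1$ whenever one of the summands converges uniformly to zero, the continuous mapping theorem then yields \eqref{e:Euler.characteristic.func.conv}.

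\textbf{Betti number.} The Euler--Poincar\'e identity \eqref{e:Euler.characteristic} rearranges to
\begin{equation*}
(-1)^k\bigl[\beta_{k,n}(t)-\E(\beta_{k,n})\bigr] \;=\; \bigl[\chi_n(t)-\E(\chi_n)\bigr] \;-\; \sum_{j\ne k}(-1)^{j-k}\bigl[\beta_{j,n}(t)-\E(\beta_{j,n})\bigr].
\end{equation*}
In view of the conclusion for $\chi_n$, \eqref{e:betti.func.conv} reduces to showing that every centred non-critical Betti number, divided by $\sqrt{\Var f_{k,n}}$, converges to zero in probability, pointwise in $t$ for the finite-dimensional statement and uniformly on compacts for the $J_1$ statement. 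For $j\ge k+1$, the crude bound $\beta_{j,n}(t)\le f_{j,n}(t)$ suffices: by \eqref{e:simple.lemma} the sequence $\tau_j(\balpha)$ is strictly decreasing for $j\ge k$, so \eqref{e:sharp.drop.k+1} propagates to $\tau_j(\balpha)<\tau_k(\balpha)-\tau_q(\balpha)/2$ for every $j\ge k+1$, and the sup-control arguments from the previous paragraph close this range.

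\textbf{Main obstacle.} The delicate range is $q-1\le j<k$: here $f_{j,n}$ can be vastly larger than $\sqrt{\Var f_{k,n}}$, so $\beta_{j,n}\le f_{j,n}$ is useless, and $\beta_{j,n}$ does genuinely fluctuate (for instance, $\beta_{q-1,n}$ depends on the random $q$-faces). My plan is a coupling argument: on a compact window $[0,T]$, stochastically dominate the dynamic configuration of $j$-faces by the $j$-faces of a static multi-parameter complex with slightly inflated parameters $\bp$, thereby reducing uniform-in-$t$ control to a single-time, static-complex variance estimate. Inside that static complex, I would compare $\beta_{j,n}$ to alternating linear combinations of face counts in dimensions near $j$, i.e., a quantitative Morse-style decomposition refined to extract the leading cancellations, in order to prove $\Var(\beta_{j,n})=o(\Var f_{k,n})$. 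This is precisely where the ``new ways of a much more detailed analysis of these Betti numbers'' anticipated in the introduction become indispensable, since the variance bounds on face counts alone cannot detect the cancellations needed.
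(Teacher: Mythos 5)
Your outline of the Euler-characteristic part is reasonable and broadly parallel to the paper: decompose $\chi_n$ into face counts, lean on Proposition~\ref{p:clt.face.counts} for the leading term, and control the remainder. For tightness you propose term-by-term $\sup$-control of each $f_{j,n}$ via a dyadic maximal inequality, whereas the paper instead proves Billingsley's fourth-moment criterion directly for the truncated sum $\chi_n^{(1)}=\sum_{j<M_1(\balpha)}(-1)^jf_j$ all at once (the combinatorial core is Lemma~\ref{l:ratio.4thmoment}). Your route is different but not implausible; it would, however, require a genuine maximal-inequality argument in the intermediate dimensions $q\le j<M_1(\balpha)$, $j\ne k$, and you should be aware that the paper never attempts a per-$j$ $\sup$ bound there.

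The Betti-number part has two gaps. First, for $j\ge k+1$ you invoke \eqref{e:sharp.drop.k+1} to conclude $\E(f_{j,n}^2)=o(\Var f_{k,n})$ via the crude bound $\beta_j\le f_j$. But \eqref{e:sharp.drop.k+1} is an \emph{additional} hypothesis assumed only for the $J_1$-convergence; the finite-dimensional statement must be proved under \eqref{e:basic.ass} alone. When \eqref{e:sharp.drop.k+1} fails, $2\tau_{k+1}(\balpha)\ge 2\tau_k(\balpha)-\tau_q(\balpha)$, so $\E(f_{k+1,n}^2)$ is \emph{not} $o(\Var f_{k,n})$ and the crude bound says nothing. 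The paper handles this by representing $\beta_{k+1}$ as a weighted sum of indicators of maximal $(k+1)$-strongly connected subcomplexes (Proposition~\ref{p:betti.representation}), truncating, bounding subcomplex probabilities via Lemma~\ref{l:formula.fowler}, and running an extensive six-case overlap analysis in $\ell=|\sigma\cap\tau|$ to show $\Var(\beta_{k+1,n})=o(\Var f_{k,n})$ — none of which is captured by a Morse-type bound.

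Second, for $j<k$ your plan (coupling plus a ``quantitative Morse-style decomposition'' to bound $\Var(\beta_{j,n})$) is the wrong tool. The paper's Proposition~\ref{p:vanishing.lower.order.betti} does not estimate a variance at all: it proves that $\sup_{0\le t\le T}\beta_{j,n}(t)=0$ with probability $1-\mathcal O(n^{-k-1})$, using the Ballmann--\'Swi\c{a}tkowski cohomology vanishing theorem (Garland's method) together with the Hoffman--Kahle--Paquette spectral gap theorem for the Erd\H{o}s--R\'enyi links, after first showing that the $k$-skeleton is pure with overwhelming probability. That $\beta_{j,n}$ is almost surely \emph{identically zero} on compact time intervals, rather than merely small in variance, is the crucial topological input. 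Face-count cancellations alone cannot detect this; $\beta_j$ is a rank of a boundary map, and there is no obvious alternating-sum identity relating it to nearby face counts with the needed precision. Without these two ingredients your reduction of \eqref{e:betti.func.conv} to \eqref{e:Euler.characteristic.func.conv} does not close.
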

\begin{remark} \label{rk:clt.explicit}
By Proposition \ref{p:moment.face.count},
\eqref{e:Euler.characteristic.func.conv} can be restated as 
$$
\left( \frac{\chi_n(t) - \E(\chi_n)}{n^{\tau_k(\balpha)-\tau_q(\balpha)/2}}, \, t\geq
  0\right)  \Rightarrow \bigl( \big\{ (q+1)!\big\}^{1/2} (k-q)! Z_k(t), \, t\geq 0\bigr).
$$
A similar reformulation is possible for \eqref{e:betti.func.conv}.
\end{remark}
\begin{remark} \label{rk:conjecture}
We think that \eqref{e:cond.regularity} alone is sufficient for weak
convergence in the $J_1$-topology on $D[0,\infty)$ in
\eqref{e:Euler.characteristic.func.conv} and 
\eqref{e:betti.func.conv}. We have chosen to assume
\eqref{e:sharp.drop.k+1} in order to simplify an already long and
technical argument. 
\end{remark}

\begin{example}  \label{ex:LM.clique}
The dynamic variants of the Linial-Meshulam complex and the clique complex are special cases of our model. An explicit form of Theorem~\ref{t:clt.topological.invariants} is stated here for these two setups.

The  Linial-Meshulam simplicial complex (see
\cite{linial:meshulam:2006,meshulam:wallach:2009}) 
corresponds, in our description, to $\balpha=(0,\dots,0,\al_k,\infty, \infty,\dots )$, with $0<\alpha_k<1$
in some position $k\geq 2$. This $k$ is then the critical dimension with $q=k$, and $\tau_k(\balpha)=k+1-\al_k$. Furthermore,
\eqref{e:basic.ass} holds. If $X([n],\bp; t)$ is the dynamic
Linial-Meshulam complex, then Theorem \ref{t:clt.topological.invariants}
says that 
\begin{equation} \label{e:LM}
\left( \frac{\chi_n(t) - \E(\chi_n)}{\sqrt{n^{k+1-\al_k}}} , \, t\geq
  0\right)  \Rightarrow \bigl( \big\{ (k+1)!\big\}^{1/2}  Z_k(t), \, t\geq 0\bigr),
\end{equation} 
at least in the sense of finite-dimensional distributions. 
  
Consider now the dynamic clique complex, for which 
 $\balpha=(\al_1,0, 0,\dots )$ with $0<\alpha_1<1$ and $\al_1 \neq 1/m$ for any $m\in \bbn$. Then,
 $q=1$ and the critical dimension is $k=\lfloor 1/\alpha_1\rfloor\geq
 q$.  Once again, \eqref{e:basic.ass} holds. Here,
 $\tau_k(\balpha)=k+1-\binom{k+1}{2}\al_1$ and 
 $\tau_q(\balpha)=2-\al_1$. Now, Theorem \ref{t:clt.topological.invariants}
says that  
\begin{equation} \label{e:CC}
\left( \frac{\chi_n(t) - \E(\chi_n)}{n^{k-\binom{k+1}{2}\al_1+\al_1/2}} , \, t\geq
  0\right)  \Rightarrow \bigl(  \sqrt{2}(k-1)!  Z_k(t), \, t\geq
0\bigr), 
 \end{equation}
 once again, at least  in the finite-dimensional distributions.

For both models, we also obtain corresponding results for the Betti numbers in the critical dimension. In the dynamic clique complex, if $G_1$ is an exponential distribution, then, as mentioned above,  $Z_k$ is a zero-mean stationary Ornstein-Uhlenbeck Gaussian
process, as in \cite{thoppe:yogeshwaran:adler:2016}. 

As for the technical conditions for tightness,
in the dynamic Linial-Meshulam complex, we only need to check \eqref{e:cond.regularity} just for $i=k$, while \eqref{e:sharp.drop.k+1} always holds as $\tau_{k+1}(\balpha)=-\infty$. In the case of a dynamic clique complex, one needs to check \eqref{e:cond.regularity}  just for $i=1$.  
On the other hand, \eqref{e:sharp.drop.k+1} reduces to $\al_1 > 4/(2k+3)$, implying that the corresponding functional convergence follows only when $4/5 < \al_1 < 1$ and the critical dimension is $k=\lfloor 1/\al_1\rfloor=1$. 

\end{example}
\begin{remark}
For the dynamic clique complex, the assumption \eqref{e:sharp.drop.k+1} fails in a certain range of the parameter. Therefore, Theorem \ref{t:clt.topological.invariants} does not claim the functional convergence in full generality, for the Euler characteristic and the Betti numbers in the critical dimension. On the other hand, \cite{thoppe:yogeshwaran:adler:2016} who only discuss this model, established tightness in full generality, and hence FCLT in the $J_1$-topology on $D[0,\infty)$.
The reason for this discrepancy is the generality of our setup. In particular, in
the dynamic clique complex, all Betti numbers except that in the
critical dimension are known to vanish with a very high probability 
(see \cite{kahle:meckes:2013}, \cite{kahle:2014}), which makes it possible to
obtain the required tightness in
\cite{thoppe:yogeshwaran:adler:2016}. In the general multi-parameter
simplicial complex, however, this is no longer necessarily the case,
and the Betti number in the dimension greater than the critical one
may not vanish; see Corollary~1.7 of \cite{fowler:2019}. To overcome the
resulting difficulty, we have imposed an extra condition
\eqref{e:sharp.drop.k+1}. We anticipate that the tightness holds
without that extra condition; one way to avoid this is via very
complicated fourth moment estimates for the Betti numbers based on the
expression in Proposition \ref{p:betti.representation}.

 
\end{remark}

\section{Proof of the FCLT for the face counts}  \label{s:proofs.facecounts}

In the sequel, we omit the subscript $n$ from all face count and Betti number notations. For example, we simply write $f_j(t)$, $\beta_j(t)$
etc. Everywhere, $C$ denotes a generic positive constant, which is
independent of $n$ but may vary between (or even within) the lines.  

We start with proving the finite-dimensional convergence in
Proposition \ref{p:clt.face.counts}. By the Cram\'er-Wold device, it
is enough to show that for all $0 \le t_1 < \dots <t_m <\infty$, $a_i
\in \bbr$, $i=1,\dots,m$, $m\ge1$,  
\begin{equation} \label{e:CW.faces}
\frac{\sum_{i=1}^ma_i \big( f_k(t_i)-\E(f_k) \big)}{\sqrt{\Var(f_k)}}
\Rightarrow\sum_{i=1}^m a_i Z_k(t_i) \ \ \text{in } \bbr.
\end{equation}
Clearly, it is enough to consider such choices of the coefficients
for which the variance in the right hand side of \eqref{e:CW.faces}
does not vanish, so fix such a set of coefficients.  

Let $J$ be the collection of $k-$faces or, equivalently, words of length $k+1$ in $[n].$ For $\bj \in J,$ let 
$$
X_\bj =\frac{\sum_{i=1}^m a_i \big( \xi_{\bj}(t_i) - \E
  (\xi_{\bj})\big)}{\sqrt{\Var \big( \sum_{i=1}^m a_i f_k(t_i)
    \big)}};
$$
recall that $\xi_{\bj}(t)$ is the indicator function that the $k$-face
associated with the word $\bj$ is ``on" at time $t$. Finally, define 
$$
W := \sum_{\bj \in J}X_{\bj} =\frac{\sum_{i=1}^m a_i \big( f_k(t_i)-
  \E (f_k)\big)}{\sqrt{\Var \big( \sum_{i=1}^m a_i f_k(t_i) \big)}}, 
$$
so that $\E(W)=0$ and $\Var (W)=1$.

In the terminology of
\cite{barbour:karonski:rucinski:1989} (see Equ.~(2.7) therein),  $\big\{ X_\bj, \,
\bj \in J \big\}$ constitutes a \textit{dissociated} set of random variables. To see  this, identify each $k-$face $\bj \in J$ by the tuple $\bj_{q} \equiv \Big(j_1, \ldots, j_{\binom{k + 1}{q + 1}}\Big),$ where each $j_i$ corresponds to a $q-$face in $\bj.$ For example, when $k = 3$ and $q = 1,$ identify the $3-$face $[1, 2, 3, 4]$ by the tuple $\big([1, 2], [1, 3], [1, 4], \ldots, [3, 4]\big).$ Then, for any sets $K, L \subset J_q := \{\bj_q: \bj \in J\}$
such that
\[
\left|\bigcup_{\bj_q
\in K} \left\{j_1, \ldots, j_{\binom{k + 1}{q + 1}}\right\} \cap \bigcup_{\bj_q' \in L} \left\{j'_1, \ldots, j'_{\binom{k + 1}{q + 1}}\right\}\right| = \emptyset, \] 
we have that
$(X_{\bj}: \, \bj_q \in K)$ is independent of $(X_{\bj}: \, \bj_q \in L)$. This verifies the claim that $\{X_\bj: \bj \in J\}$ is a dissociated set of random variables. 
We can thus invoke the central limit theorem of
\cite{barbour:karonski:rucinski:1989} for sums of dissociated random
variables.


The approach is to estimate the $L_1$-Wasserstein metric between the
distribution $\calL_W$ of $W$ and the standard normal distribution, i.e. 
$$
d_1(\calL_W,\calL_Y) = \sup_\phi \Big| \E \big( \phi(W) \big) -  \E \big( \phi(Y) \big) \Big|, 
$$
where $Y$ has the standard normal distribution and 
the supremum is taken over all  $\phi: \bbr \to
\bbr$ such that $\sup_{y_1\neq y_2}\big| \phi(y_1) - \phi(y_2)
\big|/|y_1-y_2|\le 1$.  Assuming we have shown that $d_1(\calL_W,\calL_Y)\to0$,
we have  $W\Rightarrow Y$ as $n\to\infty$. Furthermore, direct
applications of Proposition 
\ref{p:moment.face.count} and \eqref{e:covariance.func} yield 
$$
\frac{\Var \big( \sum_{i=1}^m a_i f_k(t_i) \big)}{\Var(f_k)} \to \Var \big( \sum_{i=1}^m a_i Z_k(t_i) \big), \ \ \ n\to\infty. 
$$
Therefore, $d_1(\calL_W,\calL_Y)\to0$ would give us 
$$
\frac{\sum_{i=1}^m a_i\big( f_k(t_i)-\E(f_k) \big)}{\sqrt{\Var(f_k)}}  \Rightarrow \Big\{ \Var \big( \sum_{i=1}^m a_i Z_k(t_i) \big)\Big\}^{1/2} Y \stackrel{d}{=} \sum_{i=1}^m a_i Z_k(t_i), \ \ \ n\to\infty,
$$
as required. 

It remains to actually show that $d_1(\calL_W,\calL_Y) \to 0$ as
$n\to\infty$. Let $L_{\bj} =\{ \bk \in J: |\bk \cap \bj| \geq q+1
\}$ 
be the dependency neighborhood of $\bj \in J$, that is, a collection of simplices
$\bk$  having at least one $q$-face in common with $\bj$. Then a
slight reformulation of (3.4) in 
\cite{barbour:karonski:rucinski:1989}  and
Proposition \ref{p:moment.face.count} shows that for a constant $C$
that may depend on the coefficients $a_1,\ldots, a_m$, but on nothing
else, 
\begin{align}
&d_1(\calL_W,\calL_Y) \le C \sum_{\bj \in J} \sum_{\bk \in L_\bj}\sum_{\bl \in L_\bj} \Big\{ \E \big(|X_{\bj}X_\bk X_\bl|\big) + \E \big(|X_\bj X_\bk|\big) \E\big(|X_\bl|\big)  \Big\} \notag \\
         &\quad \le \frac{C}{n^{3\tau_k(\balpha)-3\tau_q(\balpha)/2}}
       \sum_{i_1, i_2,    i_3=1}^m \sum_{\bj  \in J} \sum_{\bk\in
           L_\bj}\sum_{\bl\in L_\bj}   \bigg\{  \E\Big[ \big(
           \xi_{\bj}(t_{i_1}) + \E(\xi_{\bj}) \big) \big(
           \xi_{\bk}(t_{i_2}) + \E(\xi_{\bk}) \big) \big(
           \xi_{\bl}(t_{i_3}) + \E(\xi_{\bl}) \big)\Big]   \label{e:big.brace} \\
&\qquad \qquad\qquad \qquad\qquad \qquad \qquad \qquad \qquad \quad  + 2\E \Big[
                                                                                   \big( \xi_{\bj}(t_{i_1}) + \E(\xi_{\bj}) \big) \big( \xi_{\bk}(t_{i_2}) + \E(\xi_{\bk}) \big)   \Big] \E(\xi_{\bl}) \bigg\}.  \notag 
\end{align}
For fixed $\bj\in J,\bk\in L_\bj,\bl\in L_\bj$ denote 
$$
\ell_{12}= |\bj \cap \bk|, \ \ \ell_{13} =|\bj \cap  \bl|, \ \ \ell_{23} = | \bk \cap \bl|, \ \ \ell_{123}= |\bj \cap \bk \cap \bl|. 
$$
Since $\bk, \bl \in L_\bj$, it must be that $\ell_{12}\ge q+1$ and
$\ell_{13}\ge q+1$, whereas $\ell_{23}$ and $\ell_{123}$ can be less
than $q+1$.  
Given $\ell_{12}$, $\ell_{13}$, $\ell_{23}$, and $\ell_{123}$ as
above,  the expression between the braces in the right hand side of
\eqref{e:big.brace} can, up to a constant factor, be bounded by 
$$
\prod_{i=q}^k p_i^{3\binom{k+1}{i+1} - \binom{\ell_{12}}{i+1} -  \binom{\ell_{13}}{i+1} -  \binom{\ell_{23}}{i+1} +  \binom{\ell_{123}}{i+1}}
$$
For example, for $0 \le r \le s \le t<\infty$, by the
inclusion-exclusion formula, 
\begin{align*}
&  \E \big( \xi_{\bj}(r)\xi_{\bk}(s)\xi_{\bl}(t)  \big) \\
  =& \prod_{i=q}^k p_i^{3\binom{k+1}{i+1} - \binom{\ell_{12}}{i+1} -  \binom{\ell_{13}}{i+1} -  \binom{\ell_{23}}{i+1} +  \binom{\ell_{123}}{i+1}} \\
\times &\prod_{i=q}^k \P\big( \Delta_i(s)=1 \, \big|  \, \Delta_i(r)=1 \big)^{\binom{\ell_{12}}{i+1}-\binom{\ell_{123}}{i+1}} \prod_{i=q}^k \P\big( \Delta_i(t)=1 \, \big|  \, \Delta_i(s)=1 \big)^{\binom{\ell_{23}}{i+1}-\binom{\ell_{123}}{i+1}} \\
\times &\prod_{i=q}^k \P\big( \Delta_i(t)=1 \, \big|  \, \Delta_i(r)=1 \big)^{\binom{\ell_{13}}{i+1} - \binom{\ell_{123}}{i+1}}\prod_{i=q}^k \P\big( \Delta_i(s)=\Delta_i(t)=1 \, \big|  \, \Delta_i(r)=1 \big)^{\binom{\ell_{123}}{i+1}}
 \\
\le &\prod_{i=q}^k p_i^{3\binom{k+1}{i+1} - \binom{\ell_{12}}{i+1} -  \binom{\ell_{13}}{i+1} -  \binom{\ell_{23}}{i+1} +  \binom{\ell_{123}}{i+1}},
\end{align*}
and the terms of the other types can be bounded in a similar manner. 

Furthermore, observe that for every $\ell_{12}\ge q+1$, $\ell_{13} \ge
q+1$, $\ell_{23}\ge0$, and $\ell_{123} \ge 0$,  the number of the
corresponding terms in \eqref{e:big.brace}  does not exceed a constant
multiple of $ n^{3(k+1)-\ell_{12} - \ell_{13} - \ell_{23} +  \ell_{123}}$.  
Therefore, 
\begin{align*}
d_1(W,Y) &\le \frac{C}{n^{3\tau_k(\balpha)-3\tau_q(\balpha)/2}}\, \sum_{\ell_{12}=q+1}^{k+1} \sum_{\ell_{13}=q+1}^{k+1} \sum_{\ell_{23}=0}^{k+1} \sum_{\ell_{123}=0}^{\ell_{12} \wedge \ell_{13} \wedge \ell_{23}} \prod_{i=q}^k p_i^{3\binom{k+1}{i+1} - \binom{\ell_{12}}{i+1} -  \binom{\ell_{13}}{i+1} -  \binom{\ell_{23}}{i+1} +  \binom{\ell_{123}}{i+1}} \\
&\qquad \qquad \qquad \qquad \qquad \qquad \qquad \qquad   \qquad \qquad \qquad  \times n^{3(k+1)-\ell_{12} - \ell_{13} - \ell_{23} + \ell_{123}} \\
&=C \sum_{\ell_{12}=q+1}^{k+1} \sum_{\ell_{13}=q+1}^{k+1} \sum_{\ell_{23}=0}^{k+1} \sum_{\ell_{123}=0}^{\ell_{12} \wedge \ell_{13} \wedge \ell_{23}} n^{3\tau_q(\balpha)/2 - \tau_{\ell_{12}-1}(\balpha) - \tau_{\ell_{13}-1}(\balpha) - \tau_{\ell_{23}-1}(\balpha) + \tau_{\ell_{123}-1}(\balpha)}
\end{align*}
($a \wedge b = \min\{a,b \}$ for $a, b \in \bbr$). The latter sum is a
finite sum, and each term in it does not exceed  $C
n^{-\tau_q(\balpha)/2}$ which can be seen by noticing that
$\tau_{\ell_{23}-1}(\balpha) - \tau_{\ell_{123}-1}(\balpha) > 0$ and
setting $\ell_{12} = \ell_{13} = q+1$.  Therefore, the sum goes to $0$
as $n\to\infty$ and, hence, we have established the convergence of the
finite-dimensional distributions in Proposition
\ref{p:clt.face.counts}. 

In order to prove tightness in the $J_1$-topology, we use Theorem 13.5
in \cite{billingsley:1999}. By the stationarity of $f_k(t)$,
it is sufficient to show that for every $T>0$, there exists $B>0$ such
that  
$$
\frac{\E \Big[ \big( f_k(t)-f_k(s) \big)^2 \big( f_k(s)-f_k(r) \big)^2
  \Big]}{\big(\Var(f_k)\big)^2} \le B   (t-r)^{1+\gamma}  
$$
for all $0 \le r \le s \le t \le T$, $n\ge 1$, with $\gamma$ as in
\eqref{e:cond.regularity}. By Proposition \ref{p:moment.face.count},
we only need to show existence of $B$ such that 
\begin{equation}  \label{e:tightness.Thm13.5.face.counts}
\frac{\E \Big[ \big( f_k(t)-f_k(s) \big)^2 \big( f_k(s)-f_k(r) \big)^2
  \Big]}{n^{4\tau_k(\balpha)-2\tau_q(\balpha)}} \le B
(t-r)^{1+\gamma}. 
\end{equation}
This will be established while proving tightness in the proof of Theorem
\ref{t:clt.topological.invariants} below. \qed

\section{Proofs of the limit theorems for the Euler characteristic}  \label{s:proofs.euler}

We start with the strong law of large numbers. As in the last section, $C$ denotes  a generic positive constant, which is independent of $n$.
\begin{proof}[Proof of \eqref{e:SLLN.Euler.char} in Theorem \ref{t:SLLN}]
Fix $0< T < \infty$ for the duration of the proof. 
We first check that for each $j\ge 0$, 
\begin{equation}  \label{e:first.claim.SLLN}
\sup_{0 \le t \le T} \frac{\big| f_j(t)-\E(f_j)  \big|}{\E(f_k)} \to 0 \ \ \text{a.s.}
\end{equation}
If $j \in \{ 0,\dots, q-1 \}$, the left hand side is identically zero
(see Remark \ref{rem:non-random}). For $j \ge q$, by the
Borel-Cantelli lemma, it suffices to show that for every $\epsilon>0$, 
\begin{equation}  \label{e:Borel.Cantelli1}
\sum_{n=1}^\infty \P\Big( \sup_{0 \le t \le T} \big| f_j(t)-\E(f_j)
\big| > \epsilon \E(f_k) \Big)<\infty, 
\end{equation}
which will follow once we prove the following two statements: 
\begin{align}
\sum_{n=1}^\infty \P\Big( \sup_{0 \le t \le T}  f_j(t) > \E(f_j) + \epsilon\E(f_k) \Big)&<\infty, \ \ \text{and}  \label{e:Borel.Cantelli2}\\
\sum_{n=1}^\infty \P\Big( \inf_{0 \le t \le T}  f_j(t) < \E(f_j) - \epsilon\E(f_k) \Big)&<\infty. \label{e:Borel.Cantelli3}
\end{align}
Choose a positive integer $m$ so large that 
\begin{equation}  \label{e:restriction.epsilon}
\prod_{i=q}^j \left( 1+ \frac{ (G_i)_e(T/m)}{1-G_i(T/m)}  \right)^{\binom{j+1}{i+1}} < 1 + \frac{\epsilon}{2}. 
\end{equation}
By stationarity, 
$$
\P\Big( \sup_{0 \le t \le T}  f_j(t) > \E(f_j) + \epsilon\E(f_k) \Big) \le m \P \Big( \sup_{0 \le t \le T/m}  f_j(t) > \E(f_j) + \epsilon\E(f_k) \Big). 
$$
 
We now construct a new static multi-parameter simplicial
complex $X([n], \bp^{(1)})$ by setting $p_i^{(1)}=\P\big( \sup_{0\le t
  \le T/m}\Delta_i(t)=1 \big)$ for $i\geq 1$. If
$f_j^{(1)}$ is the $j$-face count in  this static complex, then, by a straightforward coupling argument,
\begin{equation}
    \label{e:coupling.trick}
    \sup_{0\le t \le T/m}f_j(t)  \stackrel{st}{\le} f_j^{(1)}. 
\end{equation}
\remove{To show \eqref{e:stochastically.dominated} note that 
$$
\sup_{0\le t \le T/m} f_j(t) \le \sum_{\sigma \subset [n], \, |\sigma|=j+1} \sup_{0\le t \le T/m} \xi_\sigma (t), 
$$
and, 
$$
\P \big( \sup_{0\le t \le T/m} \xi_\sigma(t)=1  \big) \le \P\big( \sigma \text{ forms a } j\text{-simplex in } X([n], p^{(1)}) \big)
$$
as required. }
Since by part (ii) of Lemma \ref{l:cond.prob.Delta} 
 and \eqref{e:restriction.epsilon}, 
\begin{align*}
\E(f_j^{(1)}) &= \binom{n}{j+1} \prod_{i=q}^j
                (p_i^{(1)})^{\binom{j+1}{i+1}} \\
                &\le \binom{n}{j+1}
                \prod_{i=q}^j p_i^{\binom{j+1}{i+1}} \prod_{i=q}^j
                \left( 1+ \frac{(G_i)_e(T/m)}{1-G_i(T/m)}  \right)^{\binom{j+1}{i+1}} \le
                \Big(1+\frac{\epsilon}{2}\Big)\E(f_j), 
\end{align*}
we conclude that 
\begin{align*}
\P \Big( \sup_{0 \le t \le T/m}  f_j(t) > \E(f_j) + \epsilon\E(f_k)
  \Big) &\le \P \big( f_j^{(1)}-\E(f_j^{(1)}) > \E(f_j) +\epsilon
          \E(f_k) -\E(f_j^{(1)})  \big)   \\
&\le \P\Big(  f_j^{(1)}-\E(f_j^{(1)}) > \epsilon\E(f_k)
                                               -\frac{\epsilon}{2}
                                               \E(f_j) \Big).  
\end{align*}
As $\E(f_j)/\E(f_k)\to0$, $n\to\infty$ for $j\neq k$, it holds that, for sufficiently large $n$, 
\begin{align*}
 \P\Big(  f_j^{(1)}-\E(f_j^{(1)}) > \epsilon\E(f_k) -\frac{\epsilon}{2} \E(f_j) \Big) &\le \P \Big(  \big| f_j^{(1)}-\E(f_j^{(1)})  \big| > \frac{\epsilon}{2}\E(f_k) \Big) \\
&\le \frac{4}{\epsilon^2}\, \frac{\Var(f_j^{(1)})}{\big( \E(f_k) \big)^2} \le C \frac{\Var(f_j^{(1)})}{n^{2\tau_k(\balpha)}}, 
\end{align*}
where the last inequality comes from Proposition
\ref{p:moment.face.count}. Further, since each $p_i^{(1)}$ is 
asymptotically bounded by $p_i$ times a positive constant for $i=q,\ldots, j$, the argument of the
above proposition shows that for large enough $n$,
$$
\Var(f_j^{(1)})\le C_j^{(1)} n^{2\tau_j (\balpha)-\tau_q(\balpha)}
\vee  C_j^{(2)} n^{\tau_j(\balpha)} 
$$
for some finite positive constants  $C_j^{(1)}$ and $C_j^{(2)}$. Hence,  
\begin{align*}
\P \Big( \sup_{0 \le t \le T/m}  f_j(t) > \E(f_j) + \epsilon\E(f_k) \Big) &\le C \frac{n^{2\tau_j (\balpha)-\tau_q(\balpha)}  \vee n^{\tau_j(\balpha)}  }{n^{2\tau_k(\balpha)}} \\
&\le Cn^{-\tau_q(\balpha)} \le Cn^{-\tau_1(\balpha)} = Cn^{-(2-\al_1)}. 
\end{align*}
As $\al_1 = \psi_1(\balpha) \le \psi_k(\balpha) < 1$, we get $\sum_{n=1}^\infty n^{-(2-\al_1)}<\infty$, and so \eqref{e:Borel.Cantelli2} holds. 

We now turn our attention to \eqref{e:Borel.Cantelli3}. The
stationarity of $f_j(t)$  implies that  
$$
\P \Big( \inf_{0 \le t \le T}f_j(t) < \E(f_j) - \epsilon \E(f_k)  \Big) \le m \P \Big( \inf_{0 \le t \le T/m}f_j(t) < \E(f_j) - \epsilon \E(f_k)  \Big),  
$$
where this time $m$ is chosen so that 
$$
\prod_{i=q}^j \big( 1-(G_i)_e(T/m) \big)^{\binom{j+1}{i+1}} > 1-\frac{\epsilon}{2}. 
$$
Once again, we construct a new static multi-parameter simplicial
complex $X([n], \bp^{(2)})$ by setting this time $p_i^{(2)}=\P\big( \inf_{0\le t
  \le T/m}\Delta_i(t)=1 \big)$ for $i\geq 1$. If
$f_j^{(2)}$ is the $j$-face count in this static complex, then,
$f_j^{(2)}\stackrel{st}{\le}\inf_{0\le t \le T/m}f_j(t)$. Notice that
for $i \ge q$, 
$$
p_i^{(2)} \ge \P\big( \Delta_i(0)=1, \, D^{(i)} \ge T/m  \big) = p_i
\big( 1-(G_i)_e (T/m)  \big), 
$$
so by the choice of $m$, 
$$
\E(f_j^{(2)}) \ge \Big(  1-\frac{\epsilon}{2} \Big) \E(f_j). 
$$
Proceeding as above we conclude that, for sufficiently large $n$, 
$$
\P \Big( \inf_{0 \le t \le T/m}f_j(t) < \E(f_j) - \epsilon \E(f_k)
\Big)  \le C\frac{\Var(f_j^{(2)})}{n^{2\tau_k(\balpha)}}.
$$
Noting that $p_{i}^{(2)} \leq p_i^{(1)}$, the same logic as above tells that 
$$
\Var(f_j^{(2)})\le  C_j^{(1)} n^{2\tau_j (\balpha)-\tau_q(\balpha)}
\vee  C_j^{(2)} n^{\tau_j(\balpha)} 
$$
for some finite positive constants  $C_j^{(1)}, C_j^{(2)}$, and
\eqref{e:Borel.Cantelli3} follows in the same way as
\eqref{e:Borel.Cantelli2} did. 

The next step is to show that as $n\to\infty$, 
\begin{equation}  \label{e:Euler.as.conv0}
\sup_{0\le t \le T} \frac{\big| \chi_n(t) - \E(\chi_n) \big|}{\E(f_k)} \to 0 \ \ \text{a.s.,}
\end{equation}
and by stationarity it is enough to prove that 
\begin{equation}  \label{e:Euler.as.conv}
\sup_{0\le t \le T/m} \frac{\big| \chi_n(t) - \E(\chi_n) \big|}{\E(f_k)} \to 0 \ \ \text{a.s.}
\end{equation}
for an integer $m$ large enough so that $T/m\leq a/4$; the constant $a$ is given in 
the assumption \eqref{e:assumption.Gi}. It is not difficult to see that the choice of $m$ implies $(G_i)_e(T/m) \leq 1/2.$ Combining this with part (ii) of Lemma
\ref{l:cond.prob.Delta} and recalling that
$p_i^{(1)}=\P\big( \sup_{0\le t \le T/m}\Delta_i(t)=1 \big)$, we get 
$p_i^{(1)}\leq p_i(2-p_i)$. It is now
elementary to check that there is a function $h:[0,\infty]\to
[0,\infty]$ with $h(0)=0$, $h(\infty)=\infty$, and $h(\alpha)\in
(0,\infty)$ for $0<\alpha<\infty$, such that 
\begin{equation}  \label{e:alpha.tilde.rate}
p_i^{(1)}\leq  p_i (2-p_i) \le  n^{-h(\alpha_i)} \ \ \text{if} \ \ p_i=n^{-\alpha_i}, \
i\geq 1;
\end{equation}
(for example, one may take $h(\al)=\al-\log(2-2^{-\al})/\log2$). Define now $\tilde\balpha$ by $\tilde\alpha_i=h(\alpha_i)$,
$i=1,2,\ldots$. Then, $M(\tilde\balpha)$ defined by \eqref{e:M.alpha} is finite, and 
we use \eqref{e:EC.faces} to bound 
$$
\sup_{0\le t \le T/m} \frac{\big| \chi_n(t) - \E(\chi_n)
  \big|}{\E(f_k)} \le \sum_{j=0}^{ M(\tilde\balpha)-1}
\sup_{0\le t \le T/m} \frac{\big| f_j(t) - \E(f_j) \big|}{\E(f_k)} +
\sum_{j= M(\tilde\balpha)}^{n-1} \sup_{0\le t \le T/m}
\frac{\big| f_j(t) - \E(f_j) \big|}{\E(f_k)}. 
$$
By \eqref{e:first.claim.SLLN}, the first sum in the right hand side
almost surely goes to $0$ as $n\to\infty$. For the second sum, we
again use the Borel-Cantelli lemma by initially showing that, for
every $\epsilon>0$,  
$$
\sum_{n=M(\tilde\balpha)+1}^\infty \P \Big( \sum_{j=M(\tilde\balpha) }^{n-1} \sup_{0 \le t \le T/m} \big| f_j(t)-\E(f_j)  \big| > \epsilon \E(f_k)  \Big) < \infty. 
$$
Using Markov's inequality and recalling our notation for the face counts in the static multi-parameter simplicial
complex $X([n], \bp^{(1)}),$ we bound the above sum by
\begin{equation}  \label{e:Markov.fj1}
\frac{2}{\epsilon}\sum_{n=M(\tilde\balpha)+1}^\infty\frac{1}{\E(f_k)}
\sum_{j=M(\tilde\balpha)}^{n-1} \E \Big( \sup_{0\le t \le T/m}f_j(t)  \Big) \le
\frac{2C}{\epsilon}\sum_{n=1}^\infty \frac{1}{n^{\tau_1(\balpha)}}
\sum_{j=M(\tilde\balpha)}^\infty \E \big( f_j^{(1)}  \big)<\infty 
\end{equation} 
since $\sum_{n=1}^\infty n^{-\tau_1(\balpha)}<\infty$ and
$\sum_{j= M(\tilde \balpha)}^\infty \E \big( f_j^{(1)} \big)\to 0$ as
$n\to\infty$  by Corollary \ref{cor:neg.tau}. We have now obtained
\eqref{e:Euler.as.conv} and, hence, also \eqref{e:Euler.as.conv0}.

Finally, we can use \eqref{e:EC.faces} to write 
$$
\frac{\E(\chi_n)}{\E(f_k)} = (-1)^k + \frac{\sum_{j=0, \, j \neq
    k}^{n-1}(-1)^j \E(f_j)}{\E(f_k)}. 
$$
With $M(\balpha)$ defined by \eqref{e:M.alpha}, 
$$
\bigg| \frac{\sum_{j=0, \, j \neq k}^{n-1}(-1)^j \E(f_j)}{\E(f_k)}
\bigg| \le \sum_{j=0,\,  j \neq k}^{M(\balpha)-1}\frac{\E(f_j)}{\E(f_k)} + C
\sum_{j=M(\balpha)}^\infty \E(f_j) \to 0, \ \ \ n\to\infty 
$$
by Proposition \ref{p:moment.face.count}   and Corollary
\ref{cor:neg.tau}.  Hence 
$\E(\chi_n)/\E(f_k) \to (-1)^k$,  and 
\eqref{e:SLLN.Euler.char} follows. 
\end{proof}

We now prove the functional central limit theorem for the Euler
characteristic.

\begin{proof}[Proof of \eqref{e:Euler.characteristic.func.conv} in
  Theorem \ref{t:clt.topological.invariants}]
Note, first of all, that for every $M\ge k+1$ the truncated Euler
characteristic
$$
\chi_n^{(M)}(t) = \sum_{j=0}^{M-1} (-1)^j f_j(t)
$$
satisfies, in terms of convergence of the finite-dimensional
distributions, 
$$
\left( \frac{\chi_n^{(M)}(t) - \E(\chi_n^{(M)})}{\sqrt{\Var (f_{k,n})}}, \, t\geq
  0\right)  \Rightarrow \bigl( Z_k(t), \, t\geq 0\bigr). 
$$
This follows from finite-dimensional convergence in Proposition \ref{p:clt.face.counts} and the fact
that by \eqref{e:critical.dom} and Chebyshev's inequality, 
$$
\frac{f_j(t)-\E(f_j)}{\sqrt{\Var(f_k)}} \stackrel{p}{\to} 0, \ \ \
n\to\infty, 
$$
for each $j\not= k$. 

Choosing now $M=M(\balpha)$ defined by \eqref{e:M.alpha}, we have by
Corollary \ref{cor:neg.tau} that 
\begin{equation}  \label{e:Markov.chi}
\P\bigg( \bigg|\frac{\chi_n(t) - \E(\chi_n)}{\sqrt{\Var (f_k)}}
-\frac{\chi_n^{(M(\balpha))} (t)- \E(\chi_n^{(M(\balpha))})}{\sqrt{\Var (f_k)}}   \bigg|
>\epsilon \bigg) \le \frac{2}{\epsilon
  \sqrt{\Var(f_k)}}\sum_{j=M(\balpha)}^\infty \E(f_j) \to 0
\end{equation}
as $n\to\infty$ for any $\epsilon>0$. Therefore, 
$$
\frac{\chi_n(t) - \E(\chi_n)}{\sqrt{\Var (f_k)}} -\frac{\chi_n^{(M(\balpha))}(t) - \E(\chi_n^{(M(\balpha))})}{\sqrt{\Var (f_k)}} \stackrel{p}{\to} 0,
$$
so we have established \eqref{e:Euler.characteristic.func.conv} in
terms of convergence of the finite-dimensional distributions.

\remove{Assuming  \eqref{e:cond.regularity}, we now
establish tightness in the Skorohod $J_1$-topology. We may and will
assume that $T\leq 1/m$, where $m$ is given in
\eqref{e:restriction.epsilon} with $\epsilon=1$ and $T=1$.  Denote 
\begin{equation} \label{e:M1.alpha}
M_1(\balpha)= \min \big\{ i>k: \tau_i(\balpha)< \tau_q(\balpha)\big\},
\end{equation}
and note that $M_1(\balpha)\leq M(\balpha)$ defined in
\eqref{e:M.alpha}. Write
\begin{align} \label{e:split.tightness}
\chi_n(t) =  \sum_{j=0}^{M_1(\balpha)-1} (-1)^j f_j(t)+
  \sum_{j=M_1(\balpha)}^{n-1} (-1)^j f_j(t) := \chi^{(1)}_n(t)
  +\chi^{(2)}_n(t), \ 0\leq t\leq 1.
\end{align}
We start with proving that
\begin{equation} \label{e:chi.2}
\sup_{0\leq t\leq 1/m} \left| \frac{\chi^{(2)}_n(t) - \E(\chi^{(2)}_n)}{\sqrt{\Var
      (f_k)}}\right| \stackrel{p}{\to} 0. 
\end{equation}  
Indeed, we have
\begin{align*}
\E \sup_{0\leq t\leq 1/m} \left| \frac{\chi^{(2)}_n(t) - \E(\chi^{(2)}_n)}{\sqrt{\Var
      (f_k)}}\right| 
\leq & C \sum_{j=M_1(\balpha)}^{n-1} \frac{\E \sup_{0\leq t\leq 
   1/m}f_j(t)}{n^{2\tau_k(\al)-\tau_q(\al)}} \\
 \leq & 2C \sum_{j=M_1(\balpha)}^{n-1} \frac{\E
        f_j}{n^{2\tau_k(\al)-\tau_q(\al)}} \to 0
\end{align*}
as $n\to\infty$ by Corollary \ref{cor:neg.tau} since
$$
\E f_j = Cn^{\tau_j(\balpha)} = o\bigl( n^{\tau_q(\balpha)}\bigr) 
= o\bigl( n^{2\tau_k(\al)-\tau_q(\balpha)}\bigr) 
$$
for $j\geq M_1(\balpha)$. Therefore, \eqref{e:chi.2} follows, and so it
remains to prove tightness of the process $\bigl( \chi^{(1)}_n(t),\, 
0\leq t\leq T\bigr)$. 
}

Assuming \eqref{e:cond.regularity} and \eqref{e:sharp.drop.k+1}, we now
establish tightness in the Skorohod $J_1$-topology. 
Denote 
$$
M_1(\balpha)= \min \big\{ i>k: \tau_i(\balpha)< \tau_q(\balpha)\big\}. 
$$
Fix $T>0$ and choose $m$ so that $T/m \le a /4,$ where $a$ is the constant from \eqref{e:assumption.Gi}. Recall once again the notation $p_i^{(1)}=\P\big( \sup_{0\le t \le T/m} \Delta_i(t)=1 \big)$, $i\ge 1$, so that $p_i^{(1)} \le p_i (2-p_i) \le n^{-\tilde \al_i},$ where $\tilde \balpha = (\tilde \al_1, \tilde \al_2, \dots)$ is as defined below \eqref{e:alpha.tilde.rate}. Note that $M_1(\balpha) \le  M(\balpha) \leq  M(\tilde \balpha) <\infty$, where $M(\balpha)$ and $M(\tilde \balpha)$ are as defined in \eqref{e:M.alpha}.
Recall also that for $j \ge q$, $f_j^{(1)}$ is the $j$-face counts in $X([n], \bp^{(1)})$, such that $f_j^{(1)} \stackrel{sd}{\ge} \sup_{0 \le t \le T/m} f_j(t)$.  Write
\begin{align*}
\chi_n(t) &=  \sum_{j=0}^{M_1(\balpha)-1} (-1)^j f_j(t)+
  \sum_{j=M_1(\balpha)}^{n-1} (-1)^j f_j(t) =: \chi^{(1)}_n(t)
  +\chi^{(2)}_n(t), \ 0\leq t\leq T.
\end{align*}
We start with proving that, as $n\to\infty$, 
$$
\frac{\sup_{0\leq t\leq T} \big|  \chi^{(2)}_n(t) - \E(\chi^{(2)}_n) \big|}{\sqrt{\Var
      (f_k)}} \stackrel{p}{\to} 0. 
$$
By stationarity, it suffices to show that 
\begin{equation}  \label{e:chi2and3}
\frac{\sup_{0\leq t\leq T/m} \big|  \chi^{(2)}_n(t) - \E(\chi^{(2)}_n) \big|}{\sqrt{\Var
      (f_k)}} \stackrel{p}{\to} 0. 
\end{equation}
Let $\epsilon > 0$ be arbitrary. Then, by Markov's inequality, for all sufficiently large $n,$
\begin{align}
\P \big( \sup_{0\le t \le T/m} \big| \chi_n^{(2)}-\E(\chi_n^{(2)}) \big| > \epsilon \sqrt{\Var(f_k)} \big) &\le \frac{2}{\epsilon \sqrt{\Var(f_k)}}\, \sum_{j=M_1(\balpha)}^{n-1} \E \big[ \sup_{0\le t \le T/m} f_j(t) \big] \label{e:Markov.chi2}\\
&\le \frac{2}{\epsilon \sqrt{\Var(f_k)}}\, \sum_{j=M_1(\balpha)}^{n-1} \E(f_j^{(1)}) \notag \\
&\le \frac{2}{\epsilon} \sum_{j=M_1(\balpha)}^{M(\tilde \balpha)-1} \frac{\prod_{i=q}^j 2^{\binom{j+1}{i+1}}\E(f_j)}{\sqrt{\Var(f_k)}} +\frac{2}{\epsilon} \sum_{j=M(\tilde \balpha)}^\infty \E(f_j^{(1)}), \notag
\end{align}
where the last inequality is due to Proposition 3.1, together with the fact that $p_i^{(1)} \le 2p_i$. The second term vanishes because $\sum_{j=M(\tilde \balpha)}^\infty \E(f_j^{(1)}) \to 0$, as $n\to\infty,$ by Corollary 3.4. On the other hand, the first vanishes since, by \eqref{e:sharp.drop.k+1}, 
$$
\E(f_j) \le n^{\tau_j(\balpha)} \le n^{\tau_{k+1}(\balpha)} = o \big( n^{\tau_k(\balpha) -\tau_q(\balpha)/2} \big) = o \big( \sqrt{\Var(f_k)} \big), \ \ n\to\infty. 
$$
Now \eqref{e:chi2and3} follows as desired, and so it remains to prove tightness of the process $\big( \chi_n^{(1)}(t), \, 0 \le t \le T \big)$.
To this aim, 
it is enough to show the existence of $B\in (0,\infty)$ such that 
\begin{equation}  \label{e:tightness.Thm13.5.Euler}
\frac{\E \Big[ \big( \chi^{(1)}_n(t)-\chi^{(1)}_n(s) \big)^2 \big(
  \chi^{(1)}_n(s)-\chi^{(1)}_n(r) \big)^2  \Big]}{n^{4\tau_k(\al)-2\tau_q(\al)}}
\le B (t-r)^{1+\gamma}  
\end{equation}
for all $0\leq r\leq s \leq t\leq T$ and $n\ge 1$. In the course of the proof, we will also establish
\eqref{e:tightness.Thm13.5.face.counts} needed for the tightness in
Proposition \ref{p:clt.face.counts}. 

We begin by setting up the notation. 
For $q+1 \le j_1, j_2 <M_1(\balpha)$ and $0 \le r \le s \le t \le T,$ denote 
$$
F_{j_1,j_2}(t,s,r) := \E \Big[ \big( f_{j_1}(t)-f_{j_1}(s) \big)^2 \big( f_{j_2}(s)-f_{j_2}(r) \big)^2  \Big]. 
$$
Consider a potential subcomplex $\bar \sigma$ in $[n]$ consisting of the 4 simplices $\sigma_1,
\sigma_2, \sigma_3, \sigma_4$ and their faces, with
$|\sigma_1|=|\sigma_2|=j_1+1$, $|\sigma_3|=|\sigma_4|=j_2+1$,  and let 
$$
a_{ij} = |\sigma_i \cap \sigma_j|, \ \ 1 \le i < j \le 4,  \ \ \ a_{ijk} = |\sigma_i \cap \sigma_j \cap \sigma_k|, \ \ 1 \le i < j < k \le 4, 
$$
$$
a_{1234} = |\sigma_1 \cap \sigma_2 \cap \sigma_3 \cap \sigma_4|. 
$$
The number of $i$-faces in $\bar \sigma$ is 
\begin{align*}
\text{comb}_i(\bar \sigma) &:= 2\binom{j_1+1}{i+1} + 2 \binom{j_2+1}{i+1} - \binom{a_{12}}{i+1} - \binom{a_{13}}{i+1} - \binom{a_{14}}{i+1} - \binom{a_{23}}{i+1} \\
&\quad - \binom{a_{24}}{i+1} - \binom{a_{34}}{i+1} +
                                                                                                                                                                       \binom{a_{123}}{i+1} + \binom{a_{124}}{i+1} + \binom{a_{134}}{i+1} + \binom{a_{234}}{i+1} - \binom{a_{1234}}{i+1};  
\end{align*}
it depends only on $j_1, j_2$, and $\ba =(a_{12}, \dots, a_{1234})$. 
We let 
\begin{align}
\Psi(\ba, \balpha)&:=\tau_{a_{12}-1}(\balpha) + \tau_{a_{13}-1}(\balpha) + \tau_{a_{14}-1}(\balpha) + \tau_{a_{23}-1}(\balpha) + \tau_{a_{24}-1}(\balpha) + \tau_{a_{34}-1}(\balpha) \label{e:psi.ba.al} \\
&\quad  - \tau_{a_{123}-1}(\balpha)- \tau_{a_{124}-1}(\balpha)- \tau_{a_{134}-1}(\balpha)- \tau_{a_{234}-1}(\balpha) +  \tau_{a_{1234}-1}(\balpha) \notag
\end{align}
(with $\tau_{-1}(\balpha)\equiv0$). 
By independence, 
\begin{align}
F_{j_1, j_2}(t,s,r) &= \sum_{\bar \sigma \subset \Xi(j_1, j_2)} \E \Big[ \big( \xi_{\sigma_1}(t) - \xi_{\sigma_1}(s) \big) \big( \xi_{\sigma_2}(t) - \xi_{\sigma_2}(s) \big)  \big( \xi_{\sigma_3}(s) - \xi_{\sigma_3}(r) \big)  \big( \xi_{\sigma_4}(s) - \xi_{\sigma_4}(r) \big)\Big] \notag\\
&=: \sum_{\bar \sigma \subset \Xi(j_1, j_2)} \E  \big[ g(t,s,r; \bar \sigma)  \big], \label{e:expectation.g}
\end{align}
with the summation restricted to the set 
\begin{align*}
\Xi(j_1,j_2) &= \big\{  \bar \sigma = (\sigma_1, \dots, \sigma_4): \, |\sigma_1|=|\sigma_2|=j_1+1, \,  |\sigma_3|=|\sigma_4|=j_2+1, \\ 
& \qquad \quad \text{and } (\sigma_1,\dots,\sigma_4) \text{ satisfies at least one of the conditions in \eqref{e:def.as} below} \big\}:  
\end{align*}
%
\begin{align}
&\text{(i)} \, a_{12}\ge q+1, a_{34}\ge q+1, \ \  \text{(ii)}\, a_{13}\ge q+1, a_{24}\ge q+1,  \ \ \text{(iii)} \, a_{14}\ge q+1, a_{23} \ge q+1,  \notag \\
&\text{(iv)}\, a_{12} \ge q+1, a_{13}\ge q+1, a_{14} \ge q+1, \ \ \text{(v)}\, a_{12} \ge q+1, a_{23}\ge q+1, a_{24} \ge q+1, \label{e:def.as} \\
&\text{(vi)}\, a_{13} \ge q+1, a_{23}\ge q+1, a_{34} \ge q+1, \ \ \text{(vii)}\, a_{14} \ge q+1, a_{24}\ge q+1, a_{34} \ge q+1.  \notag
\end{align}
%
%
Indeed, if none of the conditions in \eqref{e:def.as} holds, then the corresponding term
in \eqref{e:expectation.g} vanishes by independence and stationarity.

Our goal is to bound the expectation  $\E\big[ g(t,s,r; \bar \sigma)
\big]$ in \eqref{e:expectation.g}. Note that $g(t, s, r; \bar \sigma) \in \{-1, 0, +1\}.$ Hence, for $g(t, s, r; \bar \sigma)$ not to vanish, 
every $i$-face of the simplex $\sigma_1$ must exist
either at time $s$ or at time $t$, $i=q,\ldots, j_1$, and the same is true for the
simplex $\sigma_2$. Similarly, every $i$-face of the simplex $\sigma_3$ must exist
either at time $r$ or at time $s$, $i=q,\ldots, j_2$, and the same is true for the
simplex $\sigma_4$. The probability that this happens is bounded from
above by 
\begin{equation} \label{onetime.prod}
16 \prod_{i=q}^{j_1\vee j_2} p_i^{\text{comb}_i(\bar \sigma)},
\end{equation} 
where we take into account only the first (smallest) time a face
exists if it is required to exist multiple times. Additionally, at
least one face of the complex spanned by the simplices
$\sigma_1,\sigma_2$ must switch from existence to non-existence, or vice versa,
between times $s$ and $t$, and at least one face of the complex spanned
by the simplices 
$\sigma_3,\sigma_4$ must switch from existence to non-existence, or vice versa,
between times $r$ and $s$. This may be the same face or two different
faces. Let us denote the corresponding (non-disjoint) events by $A_1$
and $A_2$. Consider the event $A_1$ first. The number of possible
faces that can change their status does not exceed the total number of
faces in $\bar\sigma$, which is, in turn, bounded by
$2^{2(j_1+j_2)}$. For such an $i$-face the probability $p_i$ in
\eqref{onetime.prod}  will be replaced by one by of following two
probabilities: 
$$
\P\bigl( \Delta_i(r)=0, \Delta_i(s)=1, \Delta_i(t)=0\bigr)
$$
and
$$
\P\bigl( \Delta_i(r)=1, \Delta_i(s)=0,\Delta_i(t)=1\bigr),
$$
both of which are bounded by $(2c/a)p_i(t-r)^{1+\gamma}$ by Lemma
\ref{l:triples}. Therefore,
$$
\P(A_1)\leq C 2^{2(j_1+j_2)}(t-r)^{1+\gamma} \prod_{i=q}^{j_1\vee j_2}
p_i^{\text{comb}_i(\bar \sigma)}.
$$
Considering the event $A_2$ now, we see that the number of possible
pairs of faces that can change their status does not
exceed $2^{4(j_1+j_2)}$. For each such a pair of an $i_1$-face and an
$i_2$-face, the product $p_{i_1}p_{i_2}$  in
\eqref{onetime.prod}  will be, up to renaming, replaced by
$$
\P\bigl( \Delta_{i_1}(r)=1,\Delta_{i_1}(s)=0\bigr)
\P\bigl( \Delta_{i_2}(s)=1,\Delta_{i_2}(t)=0\bigr),
$$
or similar expressions obtained by flipping $1$s and $0$s. By Lemma \ref{l:cond.prob.Delta}, any such expression is bounded by
$$
p_{i_1}p_{i_2} \left(\frac{2}{a}\right)^2 (t-r)^2.
$$
Since $\gamma\leq 1$, we conclude that
$$
\P(A_2)\leq C 2^{4(j_1+j_2)} (t-r)^{1+\gamma}\prod_{i=q}^{j_1\vee j_2}
p_i^{\text{comb}_i(\bar \sigma)}, 
$$
and so
\begin{equation}  \label{e:comb.prod}
\E  \big[ |g(t,s,r; \bar \sigma) | \big]  \leq C 2^{4(j_1+j_2)}
(t-r)^{1+\gamma}\prod_{i=q}^{j_1\vee j_2} 
p_i^{\text{comb}_i(\bar \sigma)}.  
\end{equation} 

Substituting this back into \eqref{e:expectation.g},  we obtain 
\begin{align*}
F_{j_1, j_2}(t,s,r) &\le C 2^{4(j_1+j_2)}   (t-r)^{1+\gamma} \sum_{\bar \sigma \in \Xi(j_1, j_2)} \prod_{i=q}^{j_1 \vee j_2} p_i^{\text{comb}_i(\bar \sigma)} \\
&=  C 2^{4(j_1+j_2)}   (t-r)^{1+\gamma} \sum_{\ba \in \A} \sum_{\bar \sigma \in \Xi(j_1, j_2)}
                                                                                                                                                                   \hspace{-10pt} \one \big\{
                 |\sigma_1 \cap \sigma_2|= a_{12}, \,   |\sigma_1 \cap
                 \sigma_3|=a_{13}, \dots, \\
                 &\qquad \qquad \qquad \qquad \qquad \qquad \qquad \qquad \qquad |\sigma_1 \cap \sigma_2 \cap \sigma_3 \cap \sigma_4| =a_{1234} \big\} \prod_{i=q}^{j_1 \vee j_2}
         p_i^{\text{comb}_i(\bar \sigma)},  
\end{align*}%
where $\A$ is the collection of $\ba = (a_{12}, \dots, a_{1234})$
satisfying   at least one of the conditions in \eqref{e:def.as}. 
Note that $\text{comb}_i(\bar \sigma)$ depends only on $\ba$, and for
any $\ba$, 
\begin{align*}
&\sum_{\bar \sigma \in \Xi(j_1, j_2)} \hspace{-10pt} \one \big\{
                 |\sigma_1 \cap \sigma_2|= a_{12}, \,   |\sigma_1 \cap
                 \sigma_3|=a_{13}, \dots, |\sigma_1 \cap \sigma_2 \cap \sigma_3 \cap \sigma_4| =a_{1234} \big\} \le n^{\text{comb}_0(\bar \sigma)}. 
\end{align*}
Since 
$$
n^{\text{comb}_0(\bar \sigma)} \prod_{i=q}^{j_1\vee j_2} p_i^{\text{comb}_i(\bar \sigma)} = n^{2(\tau_{j_1}(\balpha)+\tau_{j_2}(\balpha))-\Psi(\ba, \balpha)} 
$$
with $\Psi(\ba, \balpha)$  given in \eqref{e:psi.ba.al}, we obtain 
\begin{align}
F_{j_1, j_2}(t,s,r) &\le  C 2^{4(j_1+j_2)}   (t-r)^{1+\gamma}
\sum_{\ba \in \A} n^{2(\tau_{j_1}(\balpha)+\tau_{j_2}(\balpha))-\Psi(\ba, \balpha)}. \label{e:final.Fj1j2} 
\end{align}

We proceed with the following lemma. 
\begin{lemma}  \label{l:ratio.4thmoment}
For $q+1 \le j_1, j_2 < M_1(\alpha)$ and $\ba = (a_{12}, \dots, a_{1234})\in \A$, we have 
\begin{equation}  \label{e:ratio.4thmoment.1}
\frac{n^{2(\tau_{j_1}(\balpha)+\tau_{j_2}(\balpha))-\Psi(\ba, \balpha)}}{n^{4\tau_k(\balpha)-2\tau_q(\balpha)}} \le 1. 
\end{equation}
\end{lemma}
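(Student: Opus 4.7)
The plan is to reinterpret the numerator of the ratio as $n^{\phi(K)}$, where $K:=\sigma_1\cup\sigma_2\cup\sigma_3\cup\sigma_4$ and $\phi$ is the additive functional on finite simplicial complexes defined by
\[
\phi(L):=|V(L)|-\sum_{i\ge 1}\al_i\,\bigl|\{i\text{-faces of }L\}\bigr|.
\]
A direct verification yields three facts: $\phi$ obeys inclusion-exclusion, $\phi(A\cup B)=\phi(A)+\phi(B)-\phi(A\cap B)$; $\phi$ of a single $j$-simplex equals $\tau_j(\balpha)$; and applying four-fold inclusion-exclusion to $|V(K)|=\text{comb}_0(\bar\sigma)$ and $|\{i\text{-faces of }K\}|=\text{comb}_i(\bar\sigma)$ gives $\phi(K)=2(\tau_{j_1}(\balpha)+\tau_{j_2}(\balpha))-\Psi(\ba,\balpha)$. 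Since $\tau_{j_i}(\balpha)\le\tau_k(\balpha)$ (as $\tau$ peaks at $k$ on $[q,M_1(\balpha)-1]\ni j_i$), the lemma is equivalent to the single lower bound $\Psi(\ba,\balpha)\ge 2\tau_q(\balpha)$ for every $\ba\in\A$.

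To prove this lower bound I would split on the structure of the ``$q$-intersection graph'' $H$ on $\{1,2,3,4\}$ with edge $(k,l)$ iff $a_{kl}\ge q+1$; the condition $\ba\in\A$ is exactly that $H$ has no isolated vertex, and an elementary graph-theoretic argument shows $H$ must contain either a perfect matching (cases (i)--(iii) in \eqref{e:def.as}) or a star $K_{1,3}$ (cases (iv)--(vii)). In a pure star case, say (iv) centered at $1$, the three edges $(1,\ell)\in E(H)$ give $\tau_{a_{1\ell}-1}(\balpha)\ge\tau_q(\balpha)$ for $\ell=2,3,4$ by the definition of $M_1(\balpha)$; meanwhile the remaining pair intersections $a_{\ell m}$ (for $\ell,m\ne 1$), the four triple intersections, and the quadruple are all at most $q$, so each such $\tau_{a-1}$ equals $a\in[0,q]$. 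Combining the submodularity $a_{klm}\le\min(a_{kl},a_{km},a_{lm})$ with the bound $\tau_q(\balpha)>q$ (which follows from $\al_q=\psi_q(\balpha)\le\psi_k(\balpha)<1$) should yield $\Psi(\ba,\balpha)\ge 3\tau_q(\balpha)-q>2\tau_q(\balpha)$.

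For the matching cases, say (i) with matched pairs $\{(1,2),(3,4)\}$, the decomposition $K=K_1\cup K_2$ with $K_1:=\sigma_1\cup\sigma_2$ and $K_2:=\sigma_3\cup\sigma_4$ gives the identity $\Psi(\ba,\balpha)=\tau_{a_{12}-1}(\balpha)+\tau_{a_{34}-1}(\balpha)+\phi(K_1\cap K_2)$, in which the first two terms are each at least $\tau_q(\balpha)$. The residual $\phi(K_1\cap K_2)$ is then expanded by a second inclusion-exclusion on its four simplicial generators $\sigma_p\cap\sigma_{q'}$; when all cross intersections $a_{13},a_{14},a_{23},a_{24}$ are at most $q$, the expression collapses to $|V(K_1\cap K_2)|\ge 0$ and the bound follows at once. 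When some cross intersection is $\ge q+1$, $\ba$ simultaneously satisfies another condition in \eqref{e:def.as} (a star or a different matching), and I would sub-case on which ones apply and use the corresponding decomposition to recover $\Psi\ge 2\tau_q$. The hard part will be the uniform bookkeeping needed to verify across all these subcases that the signed triple-intersection contributions remain controlled, relying throughout on the submodularity among the $a_{kl}, a_{klm}, a_{1234}$ and on the uniform lower bound $\tau_j(\balpha)\ge\tau_q(\balpha)$ on $[q,M_1(\balpha)-1]$.
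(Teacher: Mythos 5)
Your proposal takes a genuinely different route from the paper, and the reduction is a nice piece of insight: rewriting the numerator's exponent as $\phi(K)$ for an additive, inclusion-exclusion-obeying functional $\phi$, and then using $\tau_{j_1},\tau_{j_2}\le\tau_k$ (valid since $j_i<M_1(\balpha)$ and $\tau$ peaks at $k$) to reduce the lemma to the cleaner statement $\Psi(\ba,\balpha)\ge 2\tau_q(\balpha)$. The paper never makes this reduction; it works directly with $D=2(\tau_{j_1}+\tau_{j_2})-\Psi$, drops the $+\tau_{a_{1234}-1}$ term, and then, under each of the seven conditions in \eqref{e:def.as} separately, runs a ``killing'' argument based on the unimodality bounds in \eqref{e:unim.bounds} that leaves four residual terms each bounded by $\tau_k$. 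Your graph-theoretic characterization of $\A$ (no isolated vertex in the $q$-intersection graph $\Leftrightarrow$ contains a perfect matching or a spanning star) is correct and organizes the cases more transparently, your star argument is sound, and the pure-matching case (all cross $a_{13},a_{14},a_{23},a_{24}\le q$) is handled correctly since then $\phi(K_1\cap K_2)=|V(K_1\cap K_2)|\ge 0$.

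The gap is the mixed matching case, which you explicitly leave as ``hard bookkeeping.'' This is not merely bookkeeping: when some cross intersection exceeds $q$, the complex $K_1\cap K_2$ contains simplices whose dimensions can reach $M_1(\balpha)-1$, and $\phi$ on such complexes is \emph{not} automatically nonnegative. For instance, if the quadruple intersection sits near the critical dimension $k$ (where $\tau$ is maximal) while the pairwise cross intersections are larger (where $\tau$ has decayed back toward $\tau_q$), the signed expansion of $\phi(K_1\cap K_2)$ has a large negative term $-\tau_{a_{1234}-1}=-\tau_k$ against small positives. So $\phi(K_1\cap K_2)\ge 0$ is not guaranteed term-by-term, and establishing $\Psi\ge 2\tau_q$ in this regime still requires a comparison argument of the same flavor as the paper's unimodality inequalities \eqref{e:unim.bounds}, exploiting the chain $a_{1234}\le a_{ijk}\le a_{ij}\le j_\cdot+1<M_1(\balpha)+1$ and the fact that $\tau$ is unimodal on $[q,M_1(\balpha)-1]$ with $\tau\ge\tau_q$ there. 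Until those comparisons are worked through, the proof is incomplete precisely where the paper does its real work.
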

\begin{proof}
Notice that 
\begin{align*}
D &:=  2(\tau_{j_1}(\balpha)+\tau_{j_2}(\balpha))-\Psi(\ba, \balpha) \\
&\leq
  2(\tau_{j_1}(\balpha)+\tau_{j_2}(\balpha))   
  - \tau_{a_{12}-1}(\balpha)-\tau_{a_{13}-1}(\balpha)- \tau_{a_{14}-1}(\balpha) -
  \tau_{a_{23}-1}(\balpha) - \tau_{a_{24}-1}(\balpha) -
      \tau_{a_{34}-1}(\balpha)  \\
  &\quad +\tau_{a_{123}-1}(\balpha)+ \tau_{a_{124}-1}(\balpha)+
     \tau_{a_{134}-1}(\balpha)+ \tau_{a_{234}-1}(\balpha),
\end{align*}
and, by the choice of $j_1,j_2,$ all the terms $\tau_\cdot(\balpha)$ in
the right hand side are non-negative. 
Since the sequence $(\tau_i(\balpha), \, i\ge -1)$ is unimodal -- it
increases until $i=k$ and then decreases - we have 
\begin{align} \label{e:unim.bounds}
&\tau_{a_{12}-1}(\balpha)\geq \min\bigl( \tau_{j_1}(\balpha), \tau_{a_{123}-1}(\balpha)\vee
  \tau_{a_{124}-1}(\balpha)\bigr), \\
&\tau_{a_{13}-1}(\balpha)\geq \min\bigl( \tau_{j_1}(\balpha) \vee
  \tau_{j_2}(\balpha), \tau_{a_{123}-1}(\balpha)\vee
                                        \tau_{a_{134}-1}(\balpha)\bigr), \notag\\
&\tau_{a_{14}-1}(\balpha)\geq \min\bigl( \tau_{j_1}(\balpha) \vee
  \tau_{j_2}(\balpha), \tau_{a_{124}-1}(\balpha)\vee
                                        \tau_{a_{134}-1}(\balpha)\bigr), \notag \\
&\tau_{a_{23}-1}(\balpha)\geq \min\bigl( \tau_{j_1}(\balpha) \vee
  \tau_{j_2}(\balpha), \tau_{a_{123}-1}(\balpha)\vee
           \tau_{a_{234}-1}(\balpha)\bigr),  \notag \\
&\tau_{a_{24}-1}(\balpha)\geq \min\bigl( \tau_{j_1}(\balpha) \vee
  \tau_{j_2}(\balpha), \tau_{a_{124}-1}(\balpha)\vee
           \tau_{a_{234}-1}(\balpha)\bigr),  \notag \\                                                                                                                                                    
&\tau_{a_{34}-1}(\balpha)\geq \min\bigl(  
  \tau_{j_2}(\balpha), \tau_{a_{134}-1}(\balpha)\vee
           \tau_{a_{234}-1}(\balpha)\bigr).  \notag 
\end{align}  
Since $\ba \in \A$, at least one of the 6 conditions in
\eqref{e:def.as} holds. We will consider in detail what happens under
condition $(i)$; the situation under the other conditions is similar. 

Under condition $(i)$ in \eqref{e:def.as} the first and the last
bounds in \eqref{e:unim.bounds} are supplemented by the bounds
$ \tau_{a_{12}-1}(\balpha)\geq    \tau_q(\balpha)$, 
 $ \tau_{a_{34}-1}(\balpha)\geq    \tau_q(\balpha)$. 
We now use the remaining 4 inequalities in
\eqref{e:unim.bounds}. Note that $\tau_{a_{13}-1}(\balpha)$ ``kills"
(i.e., is at least as large as) $\tau_{j_1}(\balpha)$, 
  $\tau_{j_2}(\balpha)$ or $\tau_{a_{123}-1}(\balpha)$. Similarly,  $\tau_{a_{14}-1}(\balpha)$ ``kills"
  $\tau_{j_1}(\balpha)$,   $\tau_{j_2}(\balpha)$ or
  $\tau_{a_{134}-1}(\balpha)$. Further, $\tau_{a_{23}-1}(\balpha)$ ``kills"
  $\tau_{j_1}(\balpha)$,   $\tau_{j_2}(\balpha)$ or
  $\tau_{a_{234}-1}(\balpha)$. Finally, $\tau_{a_{24}-1}(\balpha)$ ``kills"
  $\tau_{j_1}(\balpha)$,   $\tau_{j_2}(\balpha)$ or
  $\tau_{a_{124}-1}(\balpha)$. This leaves 4 non-negative terms in the
  upper bound for $D$, neither of which exceeds $\tau_k(\balpha)$, so 
  $D\leq 4\tau_k(\balpha) -2\tau_q(\balpha)$, as required. 
\end{proof}

Since $\A$ is parameterized by the $11$ variables $a_{12}, \ldots, a_{1234},$ its cardinality does not exceed $(j_1+j_2+1)^{11}.$ Hence, by  Lemma \ref{l:ratio.4thmoment} and  \eqref{e:final.Fj1j2}  
\begin{align*}
&  \frac{\E \Big[ \big( \chi^{(1)}_n(t)-\chi^{(1)}_n(s) \big)^2 \big(
                 \chi^{(1)}_n(s)-\chi^{(1)}_n(r) \big)^2
                 \Big]}{n^{4\tau_k(\balpha)-2\tau_q(\balpha)}}    \le
                 \sum_{j_1=q+1}^{M_1(\balpha)-1}\sum_{j_2=q+1}^{M_1(\balpha)-1} 
                 \frac{F_{j_1,
                 j_2}(t,s,r)}{n^{4\tau_k(\balpha)-2\tau_q(\balpha)}}
   \leq B (t-r)^{(1+\gamma)}
\end{align*}
for some $0 < B<\infty$, as required for
\eqref{e:tightness.Thm13.5.Euler}. 
\end{proof}

\section{Proofs of the limit theorems  for the Betti numbers in the critical
  dimension }  \label{s:proofs.betti} 

Once again, we start with the strong law of large numbers. 

\begin{proof}[Proof of \eqref{e:SLLN.betti} in Theorem \ref{t:SLLN}]
For $0<T<\infty$, we have to demonstrate that 
$$
\sup_{0 \le t \le T}\frac{\big| \beta_k(t)-\E(f_k) \big|}{\E(f_k)} \to 0 \ \ \text{a.s.}
$$
By the Morse inequalities 
$$
f_k(t)-f_{k+1}(t)-f_{k-1}(t) \le \beta_k(t) \le f_k(t), 
$$
we have 
$$
\big| \beta_k(t)-\E(f_k)  \big| \le \big| f_k(t)-\E(f_k) \big| +
f_{k+1}(t) + f_{k-1}(t). 
$$
By \eqref{e:first.claim.SLLN} with $j=k$, it is enough to prove that as $n\to\infty$, 
$$
\sup_{0\le t \le T}\frac{f_{k+1}(t)}{\E(f_k)} \to 0 \ \ \text{a.s.} \ \ \text{and } \sup_{0\le t \le T}\frac{f_{k-1}(t)}{\E(f_k)} \to 0 \ \ \text{a.s.} 
$$
This is, however, an immediate conclusion of
\eqref{e:first.claim.SLLN} with $j=k\pm 1$, since by
Proposition \ref{p:moment.face.count}, 
$$
\lim_{n\to\infty} \frac{\E( f_{k+1})}{\E( f_k)} = \lim_{n\to\infty} \frac{\E( f_{k-1})}{\E( f_k)} = 0.
$$
\end{proof}

We continue with the functional central limit theorem for Betti
numbers. 

\begin{proof}[Proof of \eqref{e:betti.func.conv} in
Theorem \ref{t:clt.topological.invariants}] For convenience, we drop
the subscript $n$ in expressions such as $\beta_{j,n}$ for the
duration of the proof. We start with
introducing some terminology related to the connectivity of a
simplicial complex. It is analogous to the terminology used in
\cite{kahle:2009} and \cite{fowler:2019}. An $\ell$-dimensional simplicial complex
$X$,   is called \textit{pure} if every face of $X$ is contained
in an $\ell$-face.  
A   simplicial complex $K$ is said to be
\textit{strongly connected} of order $\ell$ if the following two
conditions hold:
\begin{itemize}
\item The $\ell$-skeleton of $K$ is pure. 
\item Every pair of $\ell$-faces $\sigma, \tau \in K$, can be connected by a sequence of $\ell$-faces, 
$$
\sigma=\sigma_0, \sigma_1, \dots, \sigma_{j-1}, \sigma_j =  \tau
$$
for some $j\geq 1$, such that $\text{dim}(\sigma_i \cap \sigma_{i+1}) = \ell-1$, $0 \le i \le j-1$. 
\end{itemize}
In this case, we will simply say that $K$ is an $\ell$-strongly
connected simplicial complex. 
Note that the dimension of $K$ itself may be greater than $\ell$. 
We call an $\ell$-strongly connected subcomplex $K$ of $X$
\textit{maximal} if there is no other $\ell$-strongly connected
subcomplex $K' \supset K$.  We start with a useful estimate similar to
the computation in \cite{fowler:2019}, p.117. 

\begin{lemma}  \label{l:formula.fowler} 
Let $K$ be a $(k+1)$-strongly connected simplicial complex 
on $j\geq k+3$ vertices with a non-zero $(k+1)$-st Betti number. Then,
for $\sigma \subset [n]$ with $|\sigma|=j$,  
\begin{equation*} 
\P(\text{the restriction of $X([n],\bp)$ to $\sigma$  is isomorphic to } K) \le  j!\prod_{i=q}^{k+1}  p_i^{\binom{k+3}{i+1}} \Big( \prod_{i=q}^{k+1} p_i^{\binom{k+1}{i}} \Big)^{j-k-3}.
\end{equation*}
\end{lemma}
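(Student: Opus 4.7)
The plan is to reduce the probability in question to a product of face-indicator probabilities and then verify a purely combinatorial lower bound on the number of faces of $K$. For the first reduction, observe that for any fixed bijection $\phi$ between the vertex set of $K$ and $\sigma$, the probability that the image $\phi(K)$ is contained in $X([n],\bp)$ equals $\prod_{i=q}^{\dim K} p_i^{f_i(K)}$, where $f_i(K)$ is the number of $i$-faces of $K$, because the construction of the static multi-parameter complex uses mutually independent Bernoulli indicators and low-dimensional faces ($i<q$) are always present. A union bound over the at most $j!$ such bijections, combined with discarding contributions from dimensions above $k+1$ (which only weakens the bound since each $p_i\le 1$), reduces the lemma to the combinatorial inequality
\[
f_i(K) \,\ge\, \binom{k+3}{i+1} + (j-k-3)\binom{k+1}{i}, \qquad q\le i\le k+1.
\]

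For the first summand on the right, I would use the hypothesis $\beta_{k+1}(K)\neq 0$ to isolate a ``base'' subcomplex of $K$ isomorphic to the boundary $\partial\Delta^{k+2}$ of a $(k+2)$-simplex on some vertex set $V_0=\{v_1,\dots,v_{k+3}\}$. Fix a non-trivial $(k+1)$-cycle $z$ in $K$ and any $(k+1)$-face $\sigma_1$ in its support; each of the $k+2$ boundary $k$-faces of $\sigma_1$ must be cancelled in $\partial z=0$ by some other $(k+1)$-face of the support, and the minimal-vertex configuration achieving this is precisely $\partial\Delta^{k+2}$ on $k+3$ vertices. Since $\partial\Delta^{k+2}$ contains every possible $i$-face on $V_0$ for $i\le k+1$, this subcomplex already accounts for $\binom{k+3}{i+1}$ $i$-faces of $K$.

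For the second summand I will order the remaining $j-k-3$ vertices of $K$ as $v_{k+4},\dots,v_j$ so that, for every $t>k+3$, there is a $(k+1)$-face $\sigma_t$ of $K$ whose vertex set consists of $v_t$ together with $k+1$ vertices drawn from $\{v_1,\dots,v_{t-1}\}$. Granting such an ordering, the $i$-faces of $\sigma_t$ containing $v_t$ are obtained by pairing $v_t$ with any $i$ of the remaining $k+1$ vertices of $\sigma_t$, yielding $\binom{k+1}{i}$ faces that are automatically new, since each of them contains $v_t$ and $v_t$ is freshly introduced at step $t$. Summing these contributions over $t=k+4,\dots,j$ together with the base-cycle contribution gives the desired lower bound on $f_i(K)$.

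Producing this vertex ordering is the step I expect to be the main obstacle. The plan is to exploit $(k+1)$-strong connectedness through the auxiliary ``nerve'' graph $G$ whose vertices are the $(k+1)$-faces of $K$ and whose edges join pairs sharing a $k$-face: by hypothesis $G$ is connected, and by purity every vertex of $K$ lies in some $(k+1)$-face. Starting from a $(k+1)$-face $\tau_1\subset V_0$ and performing a spanning-tree traversal of $G$, whenever I move across an edge from an already-visited face $\tau$ to an unvisited neighbor $\tau'$, the two faces share a $k$-face, so $\tau'$ contributes at most one vertex not already present in $\tau$. If this vertex has not yet been enumerated, I take it as the next $v_t$, in which case the remaining $k+1$ vertices of $\tau'=\sigma_t$ automatically lie in $\{v_1,\dots,v_{t-1}\}$, as required. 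Connectedness of $G$ together with purity ensures that every vertex of $K$ is eventually reached by the traversal, at which point the combinatorial inequality is verified and the proof is complete.
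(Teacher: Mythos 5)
Your reduction to the combinatorial inequality $f_i(K)\ge \binom{k+3}{i+1}+(j-k-3)\binom{k+1}{i}$, via independence and a union bound over $j!$ vertex bijections, is correct and matches the paper. The spanning-tree traversal of the nerve of $(k+1)$-faces, which gives an ordering $v_{k+3},\dots,v_j$ in which each new vertex brings in $\binom{k+1}{i}$ fresh $i$-faces, is also essentially the paper's argument. The gap is in the claim that $K$ contains a subcomplex isomorphic to $\partial\Delta^{k+2}$. That is false: take $k=1$ and $K$ the octahedral triangulation of $S^2$, which is $2$-strongly connected with $\beta_2=1$ on $j=6$ vertices; any four of its six vertices include an antipodal (non-adjacent) pair, so no four vertices span even the $1$-skeleton of $\partial\Delta^3$. (Similarly, for $k=0$, a $4$-cycle has $\beta_1\neq0$ but no triangle.) The argument ``the minimal-vertex configuration achieving the cancellation is $\partial\Delta^{k+2}$'' describes the extremal case but does not imply that the subcomplex sits inside $K$. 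Moreover, the rest of your construction relies on this, since you start the traversal at a $(k+1)$-face $\tau_1\subset V_0$.

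The paper gets the $\binom{k+3}{i+1}$ by a different bookkeeping, and that is the idea you are missing. The base is a single $(k+1)$-face $f_1$ of a chosen non-trivial $(k+1)$-cycle, contributing only $\binom{k+2}{i+1}$ $i$-faces. Every subsequently added vertex contributes at least $\binom{k+1}{i}$ new $i$-faces exactly as in your spanning-tree step, \emph{except} for the last vertex $v_c$ of the chosen cycle to appear in the enumeration: that vertex lies in at least $k+2$ of the cycle's $(k+1)$-faces, and since $v_c$ is the final cycle vertex to enter, all of those faces have their other vertices already enumerated, so they are all counted at this step. The link of $v_c$ in the cycle is a non-trivial $k$-cycle, which contains at least $\binom{k+2}{i}$ faces of dimension $i-1$; hence $v_c$ brings in at least $\binom{k+2}{i}$ new $i$-faces rather than just $\binom{k+1}{i}$. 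Pascal's identity $\binom{k+2}{i+1}+\binom{k+2}{i}=\binom{k+3}{i+1}$ then gives the required total. So the extra $\binom{k+2}{i}$ is a ``bonus'' from one carefully chosen vertex in the traversal, not from a $\partial\Delta^{k+2}$ inside $K$. If you replace your ``base $\partial\Delta^{k+2}$'' step with this observation about $v_c$, the rest of your argument goes through.
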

\begin{proof}
The argument consists of estimating the number of faces of different
dimensions $K$ has to contain. We start by denoting by $m$ the number
of the   $(k+1)$-faces in $K$. We order these faces as
follows. Fix an arbitrary $(k+1)$-cycle in $K$ and choose any
$(k+1)$-face from this cycle to be $f_1$. Since $K$ is
$(k+1)$-strongly connected, we can order the rest of the
$(k+1)$-faces in the order $f_1, \ldots, f_m$ such that each $f_p$, $p>1,$ has a $k$-dimensional intersection with at least one $f_q$ with $q<p$. This
ordering of the $(k+1)$-faces  induces an ordering on the vertices in
$K$, as follows.  First, let $v_1,\dots,v_{k+2}$ be the vertices, chosen in an arbitrary order, in the
support of $f_1.$
Each vertex after $v_{k+2}$ corresponds to the addition of a $(k+1)$-face $f_\ell;$ in that, it lies in the support of $f_{\ell}$ but is not contained in $f_1\cup \dots \cup f_{\ell-1}$. Since each vertex of $K$
belongs to some $(k+1)$-face, we obtain, in this way, an ordering
$v_{k+3}, \dots, v_j$ of all remaining vertices in $K$. Note at this
point that each vertex after $v_{k+2}$, for each $1 \le i \le k+1$, 
 is a vertex of $\binom{k+1}{i}$ of $i$-faces of some new  
  $(k+1)$-face $f_\ell$ being considered at that point. We  let 
$$
c = \max\{ k+3 \le m \le j: v_m \text{ is a vertex of the initially fixed } (k+1)\text{-cycle} \}
$$
and note that $c$ is well defined since the cycle must contain at
least $k+3$ vertices. The corresponding vertex $v_c$ is, actually,
contained in at least $k+2$ faces of dimension $k+1$, just as other vertices in the initially fixed $(k+1)$-cycle.
Furthermore, $v_c$ is contained in the fewest number of $i$-faces if it is a part of exactly $k + 2$ faces of dimension $k + 1.$ The latter occurs when, excluding $v_c,$ there are precisely $k + 2$ other vertices in this cycle and they together form a $(k +1)$-face. Therefore, when $v_c$ entered our
enumeration of the vertices, for each $1\le i \le k+1$, it was 
a vertex of at least $\binom{k+2}{i}$ new $i$-faces in $K$. We  now
see that for each $1\le i \le k+1$, 
\begin{itemize}
\item   $f_1$ contains $\binom{k+2}{i+1}$ distinct $i$-faces in $K$; 
\item each vertex in $\{ v_{k+3}, \dots, v_j \}\setminus \{ v_c \}$
  corresponds to   $\binom{k+1}{i}$ new distinct $i$-faces in $K$; 
\item $v_c$ corresponds to   at least $\binom{k+2}{i}$ new distinct $i$-faces
  in $K$. 
\end{itemize}
Therefore, for each $1\le i \le k+1$,  $K$ contains  at least 
$$
\binom{k+2}{i+1} + (j-k-3)\binom{k+1}{i} + \binom{k+2}{i} = \binom{k+3}{i+1} +  (j-k-3)\binom{k+1}{i} 
$$
$i$-faces. Finally, since there are $j!$ ways of ordering vertices in $\sigma$, we get the assertion of the lemma.
\end{proof}

By \eqref{e:Euler.characteristic}, the already established convergence in
\eqref{e:Euler.characteristic.func.conv} tells us that  
\begin{equation}  \label{e:rephrase.to.Betti}
\left( \frac{\sum_{j=0}^{n-1} (-1)^j \beta_j (t)- \E \big(
    \sum_{j=0}^{n-1} (-1)^j \beta_j \big)}{\sqrt{\Var
      (f_{k})}}, \, t\geq 0\right)   \Rightarrow \bigl(Z_k(t), \,
t\geq 0\bigr)
\end{equation}
in finite dimensional distributions. In order to prove convergence in
finite dimensional distributions in \eqref{e:betti.func.conv},  we
need to show that all (normalized) Betti numbers except that of
critical dimension are asymptotically negligible in
\eqref{e:rephrase.to.Betti}. Proposition
\ref{p:vanishing.lower.order.betti} in the Appendix 
shows negligibility of the Betti numbers in dimension smaller than the
critical dimension. Together with \eqref{e:rephrase.to.Betti}, this
gives that
$$
\left( \frac{\sum_{j=k}^{n-1} (-1)^j \beta_j(t) - \E \big(
    \sum_{j=k}^{n-1} (-1)^j \beta_j \big)}{\sqrt{\Var (f_k)}}, \,
  t\geq 0\right)  \Rightarrow  \bigl( Z_k(t), \,
t\geq 0\bigr). 
$$
Furthermore, by repeating the same argument as in \eqref{e:Markov.chi}, along with an obvious bound $\beta_j \le f_j$, we obtain that 
$$
\left( \frac{\sum_{j=M(\balpha)}^{n-1} (-1)^j \beta_j(t) - \E \big( \sum_{j=M(\balpha)}^{n-1} (-1)^j \beta_j \big)}{\sqrt{\Var (f_k)}}, \,
  t\geq 0\right)  \to  {\bf 0}, 
$$
in finite-dimensional distributions, where $M(\balpha)$ is defined in \eqref{e:M.alpha} and $\bf 0$ is the constant zero process. Hence, we can conclude that 
\begin{equation}  \label{e:k.and.higher}
\left( \frac{\sum_{j=k}^{M(\balpha)-1} (-1)^j \beta_j(t) - \E \big( \sum_{j=k}^{M(\balpha)-1} (-1)^j \beta_j \big)}{\sqrt{\Var (f_k)}}, \,
  t\geq 0\right)   \Rightarrow \bigl(  Z_k(t),  \,
t\geq 0\bigr),\ \ \ n\to\infty, 
\end{equation}
in finite-dimensional distributions.

Note that if $M(\balpha)=k+1$, then \eqref{e:betti.func.conv} is
automatic, so only the case $M(\balpha)>k+1$ needs to be
considered. 
It is, of course, sufficient to show that 
for any $j=k+1, \ldots, M(\balpha)-1$, 
$\Var (\beta_j)$ is negligible relative to $\Var (f_k)$ as
$n\to\infty$.  We will consider in detail the case $M(\balpha)=k+2$, and  prove negligibility of the variance  of
$\beta_{k+1}$. If $M(\balpha)>k+2$, the  
higher-order Betti numbers can be treated in a similar way.

Our argument relies on an explicit representation of $\beta_{k+1}(t)$ given by
\begin{equation}  \label{e:beta.k+1}
\beta_{k+1}(t) = \beta_{k+1} \big( X([n],\bp; t) \big) = \sum_{j=k+3}^n \sum_{r\ge 1}  \sum_{\sigma \subset [n], \, |\sigma|=j}r \etasigjrko(t), 
\end{equation}
where $\etasigjrko(t)$ is the indicator function of the event that
$\sigma$ forms a maximal $(k+1)$-strongly connected subcomplex
$X(\sigma, \bp;t)$,  such that $\beko \big( X(\sigma, \bp;t)
\big)=r$. See  Proposition \ref{p:betti.representation} for a formal derivation of \eqref{e:beta.k+1}. 
We often omit superscripts from the indicator if the context is clear
enough. Note that the second sum over $r \ge 1$ is a sum of at most
$\binom{j}{k+2}$ terms, because $\beta_{k+1} \big( X(\sigma, \bp;t)
\big)$ is bounded by the number of $(k+1)$-faces of $\sigma$, which
itself is bounded by $\binom{j}{k+2}$.  

As $M(\balpha)=k+2$, it follows that $\tau_{k+1}(\balpha)
> 0$, and we can find a positive integer $D$ such that  
\begin{equation}  \label{e:constraint.D}
D > \frac{k+2+\tau_{k+1}(\balpha)}{\psi_{k+1}(\balpha)-1} >0, 
\end{equation}
and we use it to define a truncated version of the representation of 
the Betti number in \eqref{e:beta.k+1} as 
$$
\tilde{\beta}_{k+1}(t)  = \tilde{\beta}_{k+1} \big( X([n],\bp;t) \big) = \sum_{j=k+3}^{D+k+1} \sum_{r\ge 1} \sum_{\sigma \subset [n], \, |\sigma|=j}r \etasigjrko(t). 
$$ 
As before, we write $\tilde{\beta}_{k+1} := \tilde{\beta}_{k+1}(0)$ and $\etasigjrko := \etasigjrko(0)$. 
We claim that 
\begin{equation}  \label{e:beta.k.and.beta.tilde}
\left( \frac{\beta_k(t)-\E(\beta_k)}{\sqrt{\Var(f_k)}} -
  \frac{\tilde{\beta}_{k+1}(t)-\E(\tilde{\beta}_{k+1})}{\sqrt{\Var(f_k)}}, \,
  t\geq 0\right)   \Rightarrow \bigl( 
 Z_k(t),   \, t\geq 0\bigr), \ \ \ n\to\infty
\end{equation}
in finite-dimensional distributions. Indeed, by \eqref{e:k.and.higher}
with $M(\balpha)=k+2$, it is enough to prove that 
$\E (\beta_{k+1}-\tilde{\beta}_{k+1})
\to 0$, $n\to\infty$. 
Since the sum over $r\ge 1$ in \eqref{e:beta.k+1} contains at most
$\binom{j}{k+2}$ terms,  
\begin{align*}
\E(\beta_{k+1}-\tilde{\beta}_{k+1}) \le&\, \E \bigg[ \sum_{j=D+k+2}^n
                                         \binom{j}{k+2}\, 
                                         \sum_{\sigma \subset [n], \,
                                         |\sigma|=j} \sum_{r\ge 1}
                                         \eta_\sigma^{(j,r,k+1)} \biggr]  \\
\le&\, \E \bigg[ \, n^{k+2} \sum_{j=D+k+2}^n
                                         \sum_{\sigma \subset [n], \,
                                         |\sigma|=j} \sum_{r\ge 1}
                                         \eta_\sigma^{(j,r,k+1)} \biggr]. 
\end{align*}
Whenever a $(k+1)$-strongly
connected subcomplex 
is formed on $j  \ge D+k+2$ vertices, it contains
a further $(k+1)$-strongly connected subcomplex 
on exactly $D+k+2$ vertices.
Furthermore, no two different such
maximal subcomplexes can contain the same $(k+1)$-strongly connected
subcomplex on $D+k+2$ vertices. Therefore,  
\begin{align*}
\E(\beta_{k+1}-\tilde{\beta}_{k+1}) \leq{} & n^{k + 2}  \binom{n}{D + k + 2} \sum_{K: |K|=D+k+2} \P(\sigma_{D+k+2} \text{ is isomorphic to } K) \\
\le {} &    \frac{n^{D+2k+4}}{(D + k + 2)!} \sum_{K: |K|=D+k+2} \P(\sigma_{D+k+2} \text{ is isomorphic to } K), 
\end{align*}
where $\sigma_{D+k+2}$ is the restriction of the complex to fixed
$D+k+2$ vertices, and  the sum above is taken over all isomorphism
classes of 
$(k+1)$-strongly connected complexes on $D+k+2$ points. Note that
the number of terms in this sum is  independent of $n$. Any such
complex $K$ contains at least $\binom{k+2}{i+1} + D\binom{k+1}{i}$
faces of dimension $i$ for each $1 \le i \le k+1$; this counting is
presented in the proof of Lemma~8.1 in \cite{fowler:2019}. Hence, 
$$
\P(\sigma_{D+k+2}  \text{ is isomorphic to } K) \le
(D+k+2)!\prod_{i=q}^{k+1} p_i^{\binom{k+2}{i+1}+D\binom{k+1}{i}}, 
$$
and so, by \eqref{e:constraint.D}, 
\begin{align*}
\E(\beta_{k+1}-\tilde{\beta}_{k+1})  
&\le C n^{k+2+\tau_{k+1}(\balpha)-D(\psi_{k+1}(\balpha)-1)} \to 0, \ \
                                        \ n\to\infty. 
\end{align*}
Thus, \eqref{e:beta.k.and.beta.tilde} follows and, by Chebyshev's
inequality, the claim \eqref{e:betti.func.conv} is established once we
check that 
$$
\frac{\Var(\tilde{\beta}_{k+1})}{\Var(f_k)} \to 0, \ \ \ n\to\infty. 
$$
It suffices to show that for every $j=k+3, \dots, D+k+1$ and $r\ge 1,$
we have 
\begin{equation}  \label{e:main.goal}
\frac{\Var\Big(  \sum_{\sigma \subset [n], \, |\sigma|=j} \etasigjrko
  \Big)}{\Var(f_k)} \to 0, \ \ \ n\to\infty. 
\end{equation}
Simplifying the notation, we get 
\begin{align*}
\Var \Big(  \sum_{\sigma \subset [n], \, |\sigma|=j} \etasig \Big) &= \sum_{\substack{\sigma \subset [n],  \\ |\sigma|=j}} \sum_{\substack{\tau \subset [n],  \\ |\tau|=j}} \Big[ \E(\etasig \etatau) - \E(\etasig) \E(\etatau)  \Big]\\
&=\sum_{\ell=0}^j \sum_{\substack{\sigma \subset [n],  \\ |\sigma|=j}} \sum_{\substack{\tau \subset [n],  \\ |\tau|=j}} \Big[ \E(\etasig \etatau) - \E(\etasig) \E(\etatau)  \Big]\, \one \big\{ |\sigma\cap \tau|=\ell  \big\} \\
&=\sum_{\ell=0}^j \binom{n}{j} \binom{j}{\ell} \binom{n-j}{j-\ell}\Big[ \E(\etasig \etatau) - \E(\etasig) \E(\etatau)  \Big]\, \one \big\{ |\sigma\cap \tau|=\ell  \big\}.
\end{align*}
We consider six cases, depending on the value of $\ell:= |\sigma \cap \tau|$. 
\vspace{5pt}

\noindent $(I)$ $\ell \in \{ 0,\dots, q-2 \}$.

We claim that in this case the events underlying the indicator
functions $\eta_\sigma$ and $\eta_\tau$ are independent, so that the
corresponding terms have no contribution to the numerator in
\eqref{e:main.goal}. Indeed, the event underlying $\eta_\sigma$ can be
stated as saying that  the restriction of the complex to $\sigma$ is 
a $(k+1)$-strongly connected
subcomplex with   Betti number in dimension $k+1$  equal to $r$
and that no $(k+1)$-simplex carried by $\sigma$ has
$k+1$ common vertices, i.e., a common $k$-face, with a $(k+1)$-simplex not carried by $\sigma$. We
also have an analogous description of the event underlying
$\eta_\tau$. Stated this way, it is clear if a face $s_1$ plays a role
in the former event, and a face $s_2$  plays a role in the latter
event, then these faces have at most $q$ vertices in common and,
hence, the restrictions of the complex to these faces are
independent. 
\vspace{5pt}

\noindent $(I\hspace{-1.5pt}I)$ $\ell =q-1$.  

First, let
$$
\gamma_k  :=\prod_{i=q}^{k+1} p_i^{\binom{k+1}{i}}=n^{-\psi_{k+1}(\balpha)}
$$
denote the probability that a fixed $k$-face and a vertex not in that face form a $(k+1)$-simplex. 
For $j\in \{k+3,\dots, D+k+1\}$ and $r\ge1$, let $K$ denote a fixed $(k+1)$-strongly connected complex on $j$ vertices whose Betti number in dimension $k+1$ is equal to $r$. For $\sigma \subset [n]$ with $|\sigma|=j$, let $A_K$ be the event that the restriction of the complex to $\sigma$ is isomorphic to $K$, and define $q_K:=\P(A_K)$.  

We first claim that, for every $\sigma \subset [n]$ with $|\sigma|=j$, 
\begin{equation}  \label{e:prob.etasig}
\E(\etasig)=\sum_{K: |K|=j} q_K (1-s_K \gamma_k +u_K)^{n-j}, 
\end{equation}
where the sum is taken over all $(k+1)$-strongly connected complexes, up to an isomorphism class, such that the Betti number in dimension $k+1$ is equal to $r$. Moreover, $s_K$ is the number of $k$-faces in $K$, and $u_K=\mathcal O(\gamma_k)$ as functions of $n,$ i.e., there exists $C>0$ such that $u_K/\gamma_k < C$ for all $n\geq 1$ and all $K$. Note that $q_K, \gamma_k$, and $u_K$ depend on $n$, whereas $s_K$ is independent of $n$. For the proof of \eqref{e:prob.etasig}, write
$$
\E(\etasig) = \sum_{K: |K|=j} q_K \P(\sigma \text{ is maximal } | A_K). 
$$
Let us fix a vertex $v\in\sigma^c$. By the inclusion-exclusion formula, the probability of forming at least one $(k+1)$-simplex between $v$ and a $k$-face in $\sigma$, can be written as $s_K \gamma_k -u_K$. 
The largest term in $u_K$ corresponds to $v$ forming two $(k+1)$-simplices with $k$-faces $f_1$ and $f_2$ respectively, such that $\dim(f_1 \cap f_2)=k-1$. Therefore, the largest term in $u_K$ is of the order $\gamma_k^2 \gamma_{k-1}^{-1} = \mathcal O(\gamma_k)$. 
Since there are $n-j$ vertices in $\sigma^c$, we have 
$$
\P(\sigma \text{ is maximal } | A_K)=(1-s_K \gamma_k +u_K)^{n-j}, 
$$
and \eqref{e:prob.etasig} follows as required. 

Next, let $K$, $K'$ be fixed $(k+1)$-strongly connected complexes on $j$ vertices with Betti number in dimension $k+1$ equal to $r$. Denote by $A_{K, K'}$ the event that the restriction of the complex to $\sigma$ and that to $\tau$ are isomorphic to $K$ and $K'$, respectively. 
It then follows from \eqref{e:prob.etasig} 
that 
\begin{align}
&\big[ \E(\etasig \etatau) - \E(\etasig) \E(\etatau)  \big]\, \one \big\{ |\sigma\cap \tau|=q-1  \big\}\label{e:case1.goal}\\
&= \sum_{K: |K|=j} \sum_{K': |K'|=j} \Big[ \P(\sigma \text{ and } \tau \text{ are maximal }| A_{K, K'}) \P(A_{K, K'}) \notag\\
&\qquad \qquad \qquad \qquad \quad - q_Kq_{K'} (1-s_K\gamma_k +u_K)^{n-j}(1-s_{K'}\gamma_k +u_{K'})^{n-j} \Big]\, \one \{ |\sigma \cap \tau|=q-1 \}, \notag
\end{align}
where the sums are again taken over all $(k+1)$-strongly connected complexes whose Betti numbers in dimension $k+1$ are equal to $r$, and 
$s_{K'}$, $u_{K'}$ are defined analogously to those for $K$. Since $|\sigma\cap \tau|=q-1$ and all the $(q-2)$-faces exist with probability one, we have $\P(A_{K, K'})=q_K q_{K'}$. 
For every $v\in (\sigma\cup \tau)^c$, let $B_v$ be the event that $v$ forms a $(k+1)$-simplex with a $k$-face in $\sigma \cup \tau$. Further, let $D_1$ denote the event that at least one $(k+1)$-simplex exists between a $k$-face in $\sigma$ and a point in $\tau\setminus (\sigma \cap \tau)$, and $D_2$ is an event obtained by switching the role of $\sigma$ and $\tau$. Then, by independence we see that 
\begin{align}
 \P(\sigma \text{ and } \tau \text{ are maximal }| A_{K, K'}) &= \P \bigg( \Big(\bigcap_{v \in (\sigma\cup \tau)^c} B_v^c\Big) \cap D_1^c \cap D_2^c\,  \Big| \, A_{K, K'}  \bigg) \label{e:conditional.maximal} \\
 &= \prod_{v\in (\sigma \cup \tau)^c} \big( 1-\P(B_v | A_{K, K'}) \big) \P(D_1^c \cap D_2^c | A_{K, K'}). \notag
\end{align}
By the inclusion-exclusion formula, we have 
\begin{align}
\P(B_v | A_{K, K'}) = (s_K+s_{K'})\gamma_k - u_K - u_{K'} -s_Ks_{K'} \gamma_k^2 + s_K\gamma_k u_{K'}+ s_{K'}\gamma_k u_K - u_Ku_{K'} =: a_{K, K'}. \label{e:def.aKK'}
\end{align}
Indeed, the probabilities that $v$ forms $(k+1)$-simplices with multiple $k$-faces in $\sigma$ are grouped into $u_K$, while the probabilities that $v$ forms $(k+1)$-simplices with multiple $k$-faces in $\tau$ are grouped into $u_{K'}$. Moreover, the probabilities that $v$ forms $(k+1)$-simplices with both $k$-faces in $\sigma$ and those in $\tau$, are grouped into one of the last four terms in \eqref{e:def.aKK'}. Above, we have also exploited the fact that the events concerning $v$ forming  $(k + 1)$-simplices with $k$-faces in $\sigma$ are independent from events concerning $v$ forming $(k + 1)$-simplices with $k$-faces in $\tau.$


Noting that there are $n-2j+q-1$ points in $(\sigma \cup \tau)^c$, the right hand side of \eqref{e:case1.goal} is equal to
\begin{equation}  \label{e:bound.case1.goal}
\sum_{K: |K|=j} \sum_{K': |K'|=j}  q_Kq_{K'} (1-a_{K, K'})^{n-2j+q-1} \big[ \P(D_1^c\cap D_2^c | A_{K, K'}) - (1-a_{K, K'})^{j-q+1} \big]. 
\end{equation}
By the binomial expansion, it is easy to see that 
\begin{align*}
&(1-a_{K,K'})^{n-2j+q-1} = \big( 1-\mathcal O(\gamma_k) \big)^{n-2j+q-1} = \mathcal O(1), \\
&(1-a_{K, K'})^{j-q+1} = 1-(j-q+1) a_{K, K'} + \mathcal O(\gamma_k^2), 
\end{align*}
and, further, 
\begin{align*}
\P(D_1|A_{K,K'}) &= 1-(1-s_K\gamma_k + u_K)^{j-q+1} = (j-q+1)(s_K\gamma_k-u_K) - \mathcal O(\gamma_k^2), \\
\P(D_2|A_{K,K'}) &= 1-(1-s_{K'}\gamma_k + u_{K'})^{j-q+1} = (j-q+1)(s_{K'}\gamma_k-u_{K'}) - \mathcal O(\gamma_k^2). 
\end{align*}

Suppose now that 
\begin{equation}  \label{e:q.pts.concentrated}
\text{there exist two } k\text{-faces } f_1\subset \sigma \text{ and } f_2 \subset \tau \text{ such that } |f_1\cap f_2|=q-1. 
\end{equation}
Under \eqref{e:q.pts.concentrated}, we claim that 
$$
\P(D_1\cap D_2|A_{K,K'}) = \mathcal O(\gamma_k^2p_q^{-1}). 
$$
Indeed, the largest term in the right hand side corresponds to the case in which a vertex in $f_1\setminus (f_1\cap f_2)$ forms a $(k+1)$-simplex with $f_2$, and a vertex in $f_2\setminus (f_1\cap f_2)$ forms a $(k+1)$-simplex with $f_1$. 
Because of a double-count of a $q$-face consisting of the vertices in $f_1 \cap f_2$ and the two selected vertices, the largest rate is of order $\gamma_k^2 p_q^{-1}$. By combining all these results, it is now straightforward to get that 
\begin{equation}  \label{e:main.term.D}
\P(D_1^c\cap D_2^c | A_{K, K'}) - (1-a_{K, K'})^{j-q+1} =\mathcal O(\gamma_k^2p_q^{-1}).
\end{equation}

If \eqref{e:q.pts.concentrated} does not hold, the same analysis gives the behavior as in \eqref{e:main.term.D}, but with a smaller correction term; $\mathcal O(\gamma_k^2)$ instead of $\mathcal O(\gamma_k^2 p_q^{-1})$. From all of these results, \eqref{e:bound.case1.goal} can be written as  
$$
C \sum_{\substack{K: |K|=j \\ \eqref{e:q.pts.concentrated} \text{holds}}} \sum_{K': |K'|=j}  q_Kq_{K'} \mathcal O(\gamma_k^2p_q^{-1}), 
$$
and, thus, 
\begin{align*}
&\binom{n}{j}\binom{j}{q-1}\binom{n-j}{j-q+1} \big[ \E(\etasig \etatau) - \E(\etasig) \E(\etatau)  \big]\, \one \big\{ |\sigma\cap \tau|=q-1  \big\} \\
&\le C \sum_{\substack{K: |K|=j \\ \eqref{e:q.pts.concentrated} \text{holds}}} \sum_{K': |K'|=j}  n^{2j-q+1}q_Kq_{K'} \mathcal O(\gamma_k^2p_q^{-1}).
\end{align*}
By Lemma \ref{l:formula.fowler}, 
\begin{align*}
n^j q_K &\le  C n^j  \prod_{i=q}^{k+1} p_i^{\binom{k+3}{i+1}} \Big( \prod_{i=q}^{k+1} p_i^{\binom{k+1}{i}} \Big)^{j-k-3} =  C n^{\tau_{k+2}(\balpha)+\al_{k+2}}\big(n^{1-\psi_{k+1}(\balpha)}\big)^{j-k-3}. 
\end{align*}
Since $\psi_{k+1}(\balpha)>1$ and $j\ge k+3$, we get $\big( n^{1-\psi_{k+1}(\balpha)} \big)^{j-k-3}\le 1$, and hence, 
\begin{equation}  \label{e:bound.nj.qK}
n^{2j}q_Kq_{K'} \le  C n^{2(\tau_{k+2}(\balpha)+\al_{k+2})}. 
\end{equation}
It now remains to check that 
$$
\frac{n^{2(\tau_{k+2}(\balpha)+\al_{k+2})}\mathcal O(n^{-q+1}\gamma_k^2p_q^{-1})}{\Var (f_k)} \to 0 \ \ \text{as } n\to\infty. 
$$
But this actually follows, since by Proposition \ref{p:moment.face.count} and \eqref{e:simple.lemma}, the expression on the left hand side is bounded by 
$$
Cn^{2(\tau_{k+1}(\balpha)-\tau_k(\balpha))} \mathcal O(n^{\tau_q(\balpha)-q+1}\gamma_k^2p_q^{-1}) = Co(1)\mathcal O(n^{2(1-\psi_{k+1}(\balpha))})\to 0, \ \ \ n\to\infty. 
$$
\vspace{5pt}

\noindent $(I\hspace{-1.5pt}I\hspace{-1.5pt}I)$ $\ell=q$. 

The case $\ell=q$ is similar but easier. 
Using the same notation as in Case ($I\hspace{-1.5pt}I$), we once again consider
\begin{align}
&\big[ \E(\etasig \etatau) - \E(\etasig) \E(\etatau)  \big]\, \one \big\{ |\sigma\cap \tau|=q  \big\} \label{e:case2.goal}  \\
&= \sum_{K: |K|=j} \sum_{K': |K'|=j} \Big[ \P(\sigma \text{ and } \tau \text{ are maximal }| A_{K, K'}) \P(A_{K, K'}) \notag \\
&\qquad \qquad \qquad \qquad \quad - q_Kq_{K'} (1-s_K\gamma_k +u_K)^{n-j}(1-s_{K'}\gamma_k +u_{K'})^{n-j} \Big]\, \one \{ |\sigma \cap \tau|=q \}. \notag
\end{align}
Since $|\sigma \cap \tau|=q$ and all the $(q-1)$-faces exist with probability one, we still get $\P(A_{K,K'})=q_Kq_{K'}$. By the same reasoning as before, we only consider the situation that 
\begin{equation}  \label{e:q.pts.concentrated1}
\text{there exist two } k\text{-faces } f_1\subset \sigma \text{ and } f_2 \subset \tau \text{ such that } |f_1\cap f_2|=q. 
\end{equation}
Under this assumption, for each $v\in(\sigma\cup \tau)^c$, the inclusion-exclusion formula gives that 
$$
\P(B_v|A_{K,K'}) = (s_K+s_{K'})\gamma_k -u_K-u_{K'} -\mathcal O(\gamma_k^2p_q^{-1}), 
$$
The largest term in the big-$\mathcal O$ expression is associated with the case in which $v$ forms two $(k+1)$-simplices with $f_1$ and $f_2,$ respectively. By \eqref{e:conditional.maximal}, we see that 
\begin{align*}
&\P(\sigma \text{ and } \tau \text{ are maximal }| A_{K, K'}) \le \prod_{v\in(\sigma\cup \tau)^c} \big( 1-\P(B_v|A_{K,K'}) \big) \\
&\qquad =\big( 1-  (s_K+s_{K'})\gamma_k +u_K+u_{K'} +\mathcal O(\gamma_k^2p_q^{-1})\big)^{n-2j+q} \\
&\qquad =  \big( 1-  (s_K+s_{K'})\gamma_k +u_K+u_{K'} \big)^n (1 + \mathcal{O}(\gamma_k))^{-(2j - q)}(1 + \mathcal{O}(\gamma_k^2 p_q^{-1}))^n\\
&\qquad = \big( 1-  (s_K+s_{K'})\gamma_k +u_K+u_{K'} \big)^n \big(1+\mathcal O(\gamma_kp_q^{-1})  \big).
\end{align*}
Here, we have made use of the following facts:  $\gamma_k^2 p_q^{-1} = \mathcal O(\gamma_k),$ $(1 + O(\gamma_k))^{-(2j - q)} = 1 + O(\gamma_k),$ 
and $(1 + \mathcal O(\gamma_k^2 p_q^{-1}))^{n} = 1 + \mathcal{O}(n \gamma_k^2 p_q^{-1}) = 1+ \mathcal{O}(\gamma_k p_q^{-1}).$

Similarly, we derive that 
\begin{align*}
 (1-s_K\gamma_k +u_K)^{n-j}(1-s_{K'}\gamma_k +u_{K'})^{n-j} &= \big( 1-  (s_K+s_{K'})\gamma_k +u_K+u_{K'} +\mathcal O(\gamma_k^2) \big)^{n-j} \\
 &= \big( 1-  (s_K+s_{K'})\gamma_k +u_K+u_{K'}  \big)^n \big( 1+\mathcal O(\gamma_k) \big).
\end{align*}
Putting all these results together, along with the binomial expansion $\big( 1-  (s_K+s_{K'})\gamma_k +u_K+u_{K'}  \big)^n = \mathcal O(1)$ as $n\to\infty$, 
we can conclude that 
$$
\binom{n}{j} \binom{j}{q} \binom{n-j}{j-q} \big[ \E(\etasig \etatau) - \E(\etasig) \E(\etatau)  \big]\, \one \big\{ |\sigma\cap \tau|=q  \big\} \le C \hspace{-10pt} \sum_{\substack{K: |K|=j \\ \eqref{e:q.pts.concentrated1} \text{ holds}}} \sum_{K': |K'|=j}  n^{2j-q} q_Kq_{K'} \mathcal O(\gamma_k p_q^{-1}). 
$$
Using Lemma \ref{l:formula.fowler} as in \eqref{e:bound.nj.qK}, it follows that the right hand side above can be bounded  by $C n^{2(\tau_{k+2}(\balpha)+\al_{k+2})} \mathcal O(n^{-q}\gamma_k p_q^{-1})$. 
Finally, Proposition \ref{p:moment.face.count} and \eqref{e:simple.lemma} help to conclude that 
\begin{align*}
\frac{n^{2(\tau_{k+2}(\balpha)+\al_{k+2})}\mathcal O(n^{-q}\gamma_k p_q^{-1})}{\Var(f_k)} &\le C n^{2(\tau_{k+1}(\balpha)-\tau_k(\balpha))}\mathcal O(n^{\tau_q(\balpha)-q-\psi_{k+1}(\balpha)+\al_q}) \\
&=C o(1)\mathcal O(n^{1-\psi_{k+1}(\balpha)}) \to 0, \ \ \ n\to\infty. 
\end{align*}
\vspace{5pt}

\remove{\noindent $(I\hspace{-1.5pt}I)$ $\ell\in \{q-1,q\}$. 

We start with the more difficult case $\ell=q$. Recall that 
$\E (\etasig\etatau) $  is the probability that the restrictions of
the complex to $\sigma$ and to $\tau$ are 
$(k+1)$-strongly connected
subcomplexes with   betti number in dimension $k+1$  equal to $r$, 
that no $k+1$-simplex carried by $\sigma$ has
$k+1$ common vertices with a $k+1$-simplex not carried by $\sigma$,
and no $k+1$-simplex carried by $\tau$ has
$k+1$ common vertices with a $k+1$-simplex not carried by $\tau$, with
the simiular interpretation of the marginal expectations. 

Denote by  $q_j$ the  probability that the restriction of the
complex to  particular $j$ vertices is a $(k+1)$-strongly connected
subcomplex whose betti number in dimension $k+1$ is equal to $r$ 
 (maximality of the strongly connected subcomplex is not
required). Let 
$$
\gamma_k  =\prod_{i=q}^{k+1} p_i^{\binom{k+1}{i}}=n^{-\psi_{k+1}(\al)}
$$
denote the probability that a fixed $k$-face and a vertex not in that
face form a $(k+1)$-simplex.  

We will estimate the covariance $\E (\etasig \etatau) - \E(\etasig)
\E(\etatau)$ through
first conditioning on the restrictions of
the complex to $\sigma$ and to $\tau$. Since $|\sigma \cap \tau|=q$
and all the $(q-1)$-faces exist with 
probability one, the event that the restrictions of
the complex to $\sigma$ and to $\tau$ are 
$(k+1)$-strongly connected
subcomplexes with   betti number in dimension $k+1$  equal to $r$ has
probability $q_j^2$. Recall that a covariance can be
written as the expectation of the conditional covariance plus the
covariance of the conditional expectations.  so we will estimate both
of these terms. We start by estimating the conditional covariance. 

Denote by $s_\sigma$
the number of $k$-faces contained in the restriction of
the complex to $\sigma$, with a similar definition for $s_\tau$. These
are random variables, but they satisfy $s_\sigma \vee s_\tau\leq
{j\choose k+1}$, 

Suppose first that 
\begin{equation}  \label{e:q.pts.concentrated}
\text{there exist $k$-faces }  f_1\subset \sigma \text{ and } f_2 \subset \tau \text{ such that } |f_1\cap f_2|=q. 
\end{equation}
Consider first the event $B_n$ that there are no 
$(k+1)$-simplices formed by some vertex $v \in (\sigma\cup \tau)^c$ 
and a $k$-face in $\sigma \cup \tau$. Since there are $n-2j+q$ points in
$(\sigma\cup \tau)^c$, the conditional probability of $B_n$ is equal
to $(1-p_{\sigma\tau})^{n-2j+q}$, where $p_{\sigma\tau}$ is the 
probability that a  fixed vertex $v \in (\sigma\cup \tau)^c$ forms at
least one $(k+1)$-simplex with a $k$-face in $\sigma \cup \tau$. By
the inclusion-exclusion formula, 
$$
p_{\sigma\tau} =(s_\sigma + s_\tau)\gamma_k -u_\sigma - u_\tau
-u_{\sigma,\tau},
$$
where $u_\sigma$, $u_\tau$ and $u_{\sigma,\tau}$ are the terms in the 
inclusion-exclusion formula of order 2 or larger, grouped as
follows. The probabilities that $v$ forms 
$(k+1)$-simplices with  multiple $k$-faces in $\sigma$ are grouped
into $u_\sigma$, while the  probabilities that $v$ forms 
$(k+1)$-simplices with  multiple $k$-faces in $\tau$ are grouped
into $u_\tau$. Finally, the probabilities that $v$ forms 
$(k+1)$-simplices with  both with $k$-faces in $\sigma$ and in $\tau$
are grouped into $u_{\sigma,\tau}$. A simiple analysis of the terms of
different orders shows that there is $c_k$ depending
only on $k$ such that $u_\sigma\vee u_\tau\leq c_k\gamma_k$. 
Similarly, the largest term in $u_{\sigma,\tau}$ corresponds to $v$
forming $(k+1)$-simplices with $f_1$ and $f_2$ in
\eqref{e:q.pts.concentrated}, and it is of the order $\mathcal
O(\gamma_k^2p_q^{-1}) $, so that
$u_{\sigma,\tau}=O(\gamma_k^2p_q^{-1})$. Since $n\gamma_k\to 0$, it
follows that the conditional probability of $B_n$ satisfies 
$$
\P(B_n) =\big( 1-(s_\sigma+s_\tau)\gamma_k+u_\sigma + u_\tau \big)^n \big(1+\mathcal O(\gamma_k p_q^{-1})  \big). 
$$

Next, consider the event $C_n$  that there are no 
$(k+1)$-simplices formed by some vertex $w \in \tau\setminus
(\sigma\cap \tau)$ and  $k$-face in $\sigma$ and the event $D_n$ 
 that there are no 
$(k+1)$-simplices formed by some vertex in $\tau\setminus
(\sigma\cap \tau)$ and  $k$-face in $\tau$. We claim that   
$$
\P(C_n) =1+\mathcal O(\gamma_kp_q^{-1}), \ \ \ \P(D_n) =1+\mathcal
O(\gamma_kp_q^{-1}), \ \ \ n\to\infty.  
$$
Indeed, the most likely way to have such a $(k+1)$-simplex is to
consider the $k$-faces $f_1,f_2$ in \eqref{e:q.pts.concentrated},
and take a vertex $w$ from $f_2\setminus f_1$. Then  a $(k+1)$-simplex
on  $f_1$ and $w$ exists with probability $\gamma_kp_q^{-1}$, where
$p_q^{-1}$ accounts for an existing $q$-face created by $w$ and the
$q$ vertices of $f_1\cap f_2$. 

In conclusion, if \eqref{e:q.pts.concentrated} holds, then the
conditional expectation has the form 
\begin{align} \label{e:cross.term.expectation} 
 &\E (\etasig\etatau) = \big( 1-(s_\sigma+s_\tau)\gamma_k + u_\sigma + u_\tau \big)^n \big( 1+\mathcal O(\gamma_kp_q^{-1}) \big), \ \ n\to\infty. 
\end{align}

If \eqref{e:q.pts.concentrated} does not hold,  the same analysis
gives the behaviour as in \eqref{e:cross.term.expectation}, but with a
smaller correction term: $O(\gamma_k)$ instead of
$O(\gamma_kp_q^{-1})$. Therefore, \eqref{e:cross.term.expectation}
holds in all cases.

The above analysis also gives the following behaviour of the
``marginal'' conditional expectations:
\begin{align*}
&\E \etasig  =
(1-s_\sigma \gamma_k + u_\sigma)^n  \big( 1+\mathcal O(\gamma_k)
  \big), \ \ 
\E \etatau  =
(1-s_\tau \gamma_k + u_\tau)^n  \big( 1+\mathcal O(\gamma_k)
  \big)
\end{align*}  
and, hence, 
\begin{equation}  \label{e:prod.expectations}
\E\etasig  
\E\etatau  =   \big( 1-(s_\sigma+s_\tau)\gamma_k + u_\sigma + u_\tau\big)^n \big( 1+\mathcal O(\gamma_k) \big), \ \ n\to\infty. 
\end{equation}
From \eqref{e:cross.term.expectation} and \eqref{e:prod.expectations}
we obtain a bound on the conditional covaraince: 
\begin{align*}
 \E(\etasig\etatau)-\E(\etasig)\E(\etatau)  &=  \big( 1-(s_\sigma+s_\tau)\gamma_k + u_\sigma + u_\tau\big)^n \mathcal O(\gamma_kp_q^{-1}) \\
&=   \mathcal O(\gamma_kp_q^{-1}),
\end{align*}
where the last equality follows from $\big( 1+\mathcal O(\gamma_k)
\big)^n = \mathcal O(1)$ as $n\to\infty$. Since all bounds are
absolute, we also conclude that the expectation of the conditional
covariance has an upper bound of $ q_j^2O(\gamma_kp_q^{-1})$. 

Next, we switch to estimating the covariance of the conditional
expectations. We already know that it can be written in the form
\begin{align*}
q_j^2 \Bigl[ &\E \Bigl( (1-s_\sigma \gamma_k + u_\sigma)^n (1-s_\tau \gamma_k +
  u_\tau)^n  \big( 1+\mathcal O(\gamma_k) \bigr)\Bigr) \\
- & \E \Bigl( (1-s_\sigma \gamma_k + u_\sigma)^n \big( 1+\mathcal
  O(\gamma_k) \bigr)\Bigr)
   \E \Bigl( (1-s_\tau \gamma_k + u_\tau)^n \big( 1+\mathcal
  O(\gamma_k) \bigr)\Bigr)\Bigr],
\end{align*}
and the expression in the square brackets is easily seen to be
\begin{align*}
 & O(\gamma_k) + {\rm Cov} \Bigl( (1-s_\sigma \gamma_k + u_\sigma)^n,
   (1-s_\tau \gamma_k + u_\tau)^n\Bigr) \\
  \leq  & O(\gamma_k) +\Bigl[ {\rm Var} (1-s_\sigma \gamma_k + u_\sigma)^n
  {\rm Var} (1-s_\tau \gamma_k + u_\tau)^n\Bigr]^{1/2} =
   O(\gamma_k) + {\rm Var} (1-s_\sigma \gamma_k + u_\sigma)^n. 
\end{align*}  
However, $s_\sigma$ has the Binomial distribution with parameters
$\binom{j+1}{k+1}$ and
$$
\hat\gamma_k=n^{-\sum_{i=1}^k {\binom{k+1}{i+1}} \alpha_i} =
n^{\tau_l(\balpha)-(k+1)}. 
$$
which gives us a bound of $O(\gamma_k)+O(\hat\gamma_k)$. 

Summarizing, the contribution of the terms with $\ell=q$ in the sum can
be bounded by 
\begin{align*}
\binom{n}{j}\binom{j}{q}\binom{n-j}{j-q}\big[
  \E(\etasig\etatau)-\E(\etasig)\E(\etatau)  \big]= (n^jq_j)^2 \mathcal
  O\bigl(n^{-q}(\gamma_kp_q^{-1}\vee \hat\gamma_k)\bigr). 
\end{align*}
By  Lemma \ref{l:formula.fowler}, 
\begin{align*}
n^j q_j &\le  C n^j  \prod_{i=q}^{k+1} p_i^{\binom{k+3}{i+1}} \Big( \prod_{i=q}^{k+1} p_i^{\binom{k+1}{i}} \Big)^{j-k-3} \\
&=  C n^{\tau_{k+2}(\balpha)+\al_{k+2}}\big(n^{1-\psi_{k+1}(\balpha)}\big)^{j-k-3}. 
\end{align*}
Since $\psi_{k+1}(\balpha)>1$ and $j\ge k+3$, we get $\big( n^{1-\psi_{k+1}(\balpha)} \big)^{j-k-3}\le 1$, and hence,  
$$
(n^jq_j)^2 \le n^{2(\tau_{k+2}(\balpha)+\al_{k+2})}. 
$$
We first check that 
$$
\frac{n^{2(\tau_{k+2}(\balpha)+\al_{k+2})}\mathcal O(n^{-q}\gamma_kp_q^{-1})}{\Var (f_k)} \to 0 \ \ \text{as } n\to\infty. 
$$
However, the expression in the  left hand side is bounded by 
$$
Cn^{2(\tau_{k+1}(\balpha)-\tau_k(\balpha))} \mathcal O(n^{\tau_q(\balpha)-	q}\gamma_k p_q^{-1}) = o(1)\mathcal O(n^{1-\psi_{k+1}(\balpha)}) \to 0, \ \ n\to\infty,
$$
as desired.
Next we check that
$$
\frac{n^{2(\tau_{k+2}(\balpha)+\al_{k+2})}\mathcal O(n^{-q}\hat \gamma_k )}{\Var (f_k)} \to 0 \ \ \text{as } n\to\infty. 
$$
Now the expression in the left hand side is bounded by
$$
Cn^{2(\tau_{k+2}(\balpha) +\al_{k+2})-\tau_k(\balpha)-k}
\leq n^{2(1-\psi_{k+1}(\balpha))}\to 0, \ \ n\to\infty,
$$
once again as desired.

The case $\ell=q-1$ is similar, but easier. We condition, once again,
on he restrictions of
the complex to $\sigma$ and to $\tau$. In this case the conditional
expectations are clearly  uncorrelated, so we only need to estimate
the expectation of the conditional covariance. We keep the notation a
bit different from the previous case. let $A_\sigma$ be the event that
that there are no  $(k+1)$-simplices formed by some vertex $v \in
(\sigma\cup \tau)^c$  and  a $k$-face in $\sigma$, $B_\sigma$ the event that
that there are no  $(k+1)$-simplices formed by some vertex $v \in
\tau\setminus (\sigma\cup \tau)$  and  a $k$-face in $\sigma$, 
$A_\tau$ the event that
that there are no  $(k+1)$-simplices formed by some vertex $v \in
(\sigma\cup \tau)^c$  and  a $k$-face in $\tau$ and $B_\tau$ the event that
that there are no  $(k+1)$-simplices formed by some vertex $v \in
\sigma\setminus (\sigma\cup \tau)$  and  a $k$-face in $\sigma$. The
events  $A_\sigma, A_\tau$ are jointly independent of the events
$B_\sigma, B_\tau$. Furthermore, by the definition of $q$, the events
$A_\sigma$ and $A_\tau$ are also independent. Note that 
$\etasig=\one_{A_\sigma}\one_{B_\sigma}$ and
$\etatau=\one_{A_\tau}\one_{B_\tau}$, so that the conditional
covariance is 
\begin{align*}
\E(\etasig\etatau) - \E(\etasig)\E(\etatau) =& \P (A_\sigma) \P
  (A_\tau) \bigl[ \P \bigl(B_\sigma\cap B_\tau\bigr) - \P
           \bigl(B_\sigma\bigr) \P \bigl(B_\tau\bigr)\bigr] \\
  \leq & \P \bigl(B_\sigma^c\cap B_\tau^c\bigr). 
\end{align*}
However, the only way the event $B_\sigma^c\cap B_\tau^c$ can occur is
that an equivalent of \eqref{e:q.pts.concentrated} holds, with the
cardinality of the intersection equal to $q-1$ and, in that case, a
vertex in $\tau\setminus (\sigma\cup \tau)$ must form a
$(k+1)$-simplex with a $k$-face in $\sigma$, and a
vertex in $\sigma\setminus (\sigma\cup \tau)$ must form a
$(k+1)$-simplex with a $k$-face in $\tau$, the probability of which is 
$O(\gamma_k^2p_q^{-1})$. Summarizing, the contribution of the terms
with $\ell=q-1$ in the sum can 
be bounded by 
\begin{align*}
\binom{n}{j}\binom{j}{q-1}\binom{n-j}{j-q+1}\big[
  \E(\etasig\etatau)-\E(\etasig)\E(\etatau)  \big] = (n^jq_j)^2 \mathcal
  O\bigl(n^{-q+1}\gamma_k^2p_q^{-1}\bigr)\bigr),
\end{align*}
so in comparison with the case $\ell=q$ is an extra factor of
$n^{1-\psi_{k+1}(\balpha)}$, and this factor vanishes as
$n\to\infty$. 

\vspace{5pt}
}

\noindent $(IV)$ $\ell\in \{q+1,\dots,k+2\}$. 

Note first that 
\begin{align*}
&\binom{n}{j}\binom{j}{\ell}\binom{n-j}{j-\ell}\Big[
    \E(\etasig\etatau) - \E(\etasig)\E(\etatau)\Big]\,  \one \big\{ |\sigma \cap \tau|=\ell \big\}  
\le  n^{2j-\ell} \E(\etasig\etatau)\,   \one \big\{ |\sigma \cap \tau|=\ell \big\}  \\
&\qquad \qquad \qquad \le  n^{2j-\ell}\sum_{K:|K|=j}\sum_{K':|K'|=j} \P(A_{K, K'}) \,  \one \big\{ |\sigma \cap \tau|=\ell \big\}.   
\end{align*}
where $A_{K, K'}$ is as in Case ($I\hspace{-1.5pt}I$).
Since there are finitely many  isomorphism classes of $(k+1)$-strongly connected complexes on $j$ vertices, we only have to show that for
all such $K, K'$ with $|\sigma \cap \tau|=\ell$,
\begin{equation}  \label{e:transfer.to.Case4}
\big( \Var(f_k) \big)^{-1} n^{2j-\ell}  \P(A_{K, K'})
\to 0, \ \ n\to\infty.
\end{equation}
By  Lemma \ref{l:formula.fowler},  
\begin{align*}
&  \P(A_{K, K'})
\le  C \bigg[ \prod_{i=q}^{k+1} p_i^{\binom{k+3}{i+1}} \Big(
                \prod_{i=q}^{k+1} p_i^{\binom{k+1}{i}} \Big)^{j-k-3}
                \bigg]^2\times
                \prod_{i=q}^{k+1}p_i^{-\binom{\ell}{i+1}},    
\end{align*}
with the last factor accounting for the faces on the vertices  common
to $\sigma$ and $\tau$.  We conclude  that 
\begin{align*}
&n^{2j-\ell}  \P(A_{K, K'})
\le C n^{2j-\ell} \prod_{i=q}^{k+1} p_i^{2\binom{k+3}{i+1}-\binom{\ell}{i+1}} \bigg( \prod_{i=q}^{k+1} p_i^{\binom{k+1}{i}} \bigg)^{2(j-k-3)} \\
=& Cn^{2(\tau_{k+2}(\balpha)+\al_{k+2})-\tau_{\ell-1}(\balpha)} \big( n^{1-\psi_{k+1}(\balpha)} \big)^{2(j-k-3)} 
\le  Cn^{2(\tau_{k+2}(\balpha)+\al_{k+2})-\tau_{\ell-1}(\balpha)}. 
\end{align*}
By  Proposition \ref{p:moment.face.count} and \eqref{e:simple.lemma}, 
$$
\frac{n^{2(\tau_{k+2}(\balpha)+\al_{k+2})-\tau_{\ell-1}(\balpha)}}{\Var(f_k)}
\le C
n^{2(\tau_{k+1}(\balpha)-\tau_k(\balpha))+(\tau_q(\balpha)-\tau_{\ell-1}(\balpha))}\to 0
$$
because the exponent is clearly negative if $\ell\in\{q+1,\dots,k+1 \}$, and it is still true in the case
$\ell=k+2$, because 
$$
2\big(\tau_{k+1}(\balpha)-\tau_k(\balpha)\big)+\big(\tau_q(\balpha)-\tau_{\ell-1}(\balpha)\big) = \big(\tau_{k+1}(\balpha)-\tau_k(\balpha)\big) + \big(\tau_q(\balpha)-\tau_k(\balpha)\big) < 0.
$$
\vspace{5pt}

\noindent $(V)$ $\ell\in \{ k+3,\dots,j-1 \}$. 
\remove{
As in Case $(I\hspace{-1.5pt}I\hspace{-1.5pt}I)$ it suffices to verify \eqref{e:transfer.to.Case4}. 
Given a $(k+1)$-cycle $K$ of strongly connected support with $|K|=j$, we shall establish a nice subcomplex $\tilde{K}$ of $K$ via the scheme proposed in Section 8 of \cite{fowler:2015}. Roughly speaking, the scheme requires to remove one vertex at a time from $K$, and count all faces containing the removed vertex. Consequently we will obtain reversely ordered vertices $v_j,\dots,v_1$ of $K$ and a sequence of $(k+1)$-faces $f_j,\dots,f_1$. Then we shall define $\tilde{K}$ as a strongly connected $(k+1)$-dimensional subcomplex formed by $f_1,\dots,f_j$. 

We first note that every vertex of $K$ is contained in \textit{at least} $k+2$ $(k+1)$-simplices. Choose an arbitrary vertex of $K$ and name it $v_j$, and pick \textit{exactly} $k+2$ $(k+1)$-simplices containing $v_j$. Denote the selected $(k+1)$-simplices by $f_j, f_{j-1}, \dots, f_{j-k-1}$. We then remove all faces of $f_j, \dots, f_{j-k-1}$ that contain $v_j$. As a result $\binom{k+2}{i}$ $i$-faces are removed for every $0\le i \le k+1$. 

Next choose a neighboring $(k+1)$-simplex $f_{j-k-2}$ and a vertex $v_{j-1}$ so that 
$$
\text{dim}(f_{j-k-2}\cap f_p)=k, \ \ \ v_{j-1}\in f_{j-k-2}\cap f_p
$$
for some $p\in \{j-k-1,\dots,j  \}$. Then we remove all faces of $f_{j-k-2}$ containing $v_{j-1}$. (now $\binom{k+1}{i}$ $i$-faces are removed for each $0\le i \le k+1$). Subsequently, pick a $(k+1)$-simplex $f_{j-k-3}$ and a vertex $v_{j-2}$, such that 
$$
\text{dim}(f_{j-k-3}\cap f_p)=k, \ \ \ v_{j-1}\in f_{j-k-3}\cap f_p
$$
for some $p\in\{ j-k-2,\dots,j \}$, and remove all faces of $f_{j-k-3}$ containing $v_{j-2}$. 

Implementing this repeatedly, one can define a sequence of $(k+1)$-faces $f_{j-k-4}, \dots, f_2$ in a reverse order, along with the corresponding vertices $v_{j-3}, \dots, v_{k+3}$ (i.e., for each $m \in \{2, \dots, j-k-4 \}$ we are to remove all faces of $f_m$ containing $v_{m+k+1}$). Finally there will remain only one $(k+1)$-simplex. We denote it by $f_1$, and its vertex set by $\{v_1,\dots, v_{k+2}  \}$. As expected we now form a $(k+1)$-dimensional subcomplex $\tilde{K}$ from $f_1,\dots,f_j$. 
By construction $\tilde{K}$ is a subcomplex of $K$ with strongly connected support. 

Going through an entire procedure one more time for $K'$, we get another $(k+1)$-dimensional subcomplex $\tilde{K}' \subset K'$ with strongly connected support. It is then clear that
\begin{align}
\P &(\sigma \text{ is isomorphic to } K, \, \tau \text{ is isomorphic to } K') \label{e:subgraph.bound} \\
&\le \P (\sigma \text{ is isomorphic to } \tilde{K}, \, \tau \text{ is isomorphic to } \tilde{K}'). \notag
\end{align}
For later reference we denote the reversely ordered $(k+1)$-simplices forming $\tilde{K}'$ by $g_j,\dots,g_1$, and the corresponding vertices by $w_j,\dots,w_1$. 
}

It is still sufficient to prove \eqref{e:transfer.to.Case4}, which we
presently do. We note that
\begin{align*}
 \P(A_{K, K'}) = &\P (\text{the complex
                          restricted to $\sigma$   is isomorphic to $K$ }) \\
  &\P (\text{the complex
   restricted to $\tau$   is isomorphic to $K'$ })
 D(K,K'),
\end{align*}
where $D(K,K')$ is the correction term, resulting from the fact that some of the faces in the restriction of the complex to $\sigma\cap\tau$ are used in both $K$ and $K'$. Hence, for each fixed
$\ell$, we 
obtain an upper bound on $\P(A_{K, K'})$ 
by considering the worst case scenario (from the perspective of showing \eqref{e:transfer.to.Case4}).

To see how it works, consider the case $\ell=k+3$. Clearly, the worst case scenario is when both $K$ and $K'$ have the least number of $i$-faces for $q \leq i \leq k + 1;$ further, in the complex restricted to $\sigma \cap \tau,$ there is a maximum overlapping of faces. However, since $K$ and $K'$ are $(k + 1)$-strongly connected, even in this worst case scenario, the complex restricted to the $k + 3$ vertices in $\sigma \cap \tau$ should have at least two $(k + 1)$-faces; of course, these two may have a common shared $k$-face.

 
Hence, 
$$
D(K,K')=\prod_{i=q}^{k+1}p_i^{-\binom{k+2}{i+1}}
\prod_{i=q}^{k+1} p_i^{-\binom{k+1}{i}};
$$
so, by Lemma \ref{l:formula.fowler}, we have 
\begin{align*}
&n^{2j-(k+3)} 
 \P(A_{K, K'})
\le  Cn^{2j-(k+3)}\bigg[
                \prod_{i=q}^{k+1} p_i^{\binom{k+3}{i+1}}
                \Big( \prod_{i=q}^{k+1} p_i^{\binom{k+1}{i}} \Big)^{j-k-3} \bigg]^2 \prod_{i=q}^{k+1}p_i^{-\binom{k+2}{i+1}} \prod_{i=q}^{k+1} p_i^{-\binom{k+1}{i}}.
\end{align*}

Suppose next that $\ell=k+4$. In the worst case scenario now, 
the restriction of the complex to $k+3$   (out of the
$k+4$)  common points of the intersection should have the same setup as in the previous case,
while 
the last $(k+4)$th common point should form a
$(k+1)$-simplex with one of the two $(k+1)$-simplices constructed before.
Once again, this is the minimal requirement since both $K$ and $K'$ are $(k+1)$-strongly connected. Hence,
$$
D_{\sigma,\tau}(K,K')=\prod_{i=q}^{k+1}p_i^{-\binom{k+2}{i+1}}
\left( \prod_{i=q}^{k+1} p_i^{-\binom{k+1}{i}}\right)^2;
$$
so, by Lemma \ref{l:formula.fowler}
\begin{align*}
&n^{2j-(k+4)}
 \P(A_{K, K'})
\le C n^{2j-(k+4)}\bigg[
                \prod_{i=q}^{k+1} p_i^{\binom{k+3}{i+1}}
                \Big( \prod_{i=q}^{k+1} p_i^{\binom{k+1}{i}} \Big)^{j-k-3} \bigg]^2 \prod_{i=q}^{k+1}p_i^{-\binom{k+2}{i+1}} \left(\prod_{i=q}^{k+1} p_i^{-\binom{k+1}{i}}\right)^2.
\end{align*}
Proceeding in the same manner for any  $\ell \in \{ k+3,\dots,j-1\}$,
we see that
\begin{align*}
&n^{2j-\ell}  
 \P(A_{K, K'})
\le   Cn^{2j-\ell}\bigg[ \prod_{i=q}^{k+1} p_i^{\binom{k+3}{i+1}} \Big( \prod_{i=q}^{k+1} p_i^{\binom{k+1}{i}} \Big)^{j-k-3} \bigg]^2 \prod_{i=q}^{k+1}p_i^{-\binom{k+2}{i+1}} \Big( \prod_{i=q}^{k+1} p_i^{-\binom{k+1}{i}} \Big)^{\ell-(k+2)}. 
\end{align*}

Therefore, as before,
\begin{align*}
&n^{2j-\ell}\bigg[ \prod_{i=q}^{k+1} p_i^{\binom{k+3}{i+1}} \Big( \prod_{i=q}^{k+1} p_i^{\binom{k+1}{i}} \Big)^{j-k-3} \bigg]^2 \prod_{i=q}^{k+1}p_i^{-\binom{k+2}{i+1}} \Big( \prod_{i=q}^{k+1} p_i^{-\binom{k+1}{i}} \Big)^{\ell-(k+2)} \\
&= n^{2(\tau_{k+2}(\balpha)+\al_{k+2}) - \tau_{k+1}(\balpha)} \big( n^{1-\psi_{k+1}(\balpha)} \big)^{2j-k-\ell-4} \\
&\le n^{2(\tau_{k+2}(\balpha)+\al_{k+2}) - \tau_{k+1}(\balpha)},
\end{align*}
which is the same bound as that for $\ell=k+2$ in the previous case. Thus, we get \eqref{e:transfer.to.Case4}, as desired.
\vspace{5pt}

\noindent $(VI)$ $\ell=j$.

We again prove \eqref{e:transfer.to.Case4}, this time only with
$K=K'$. Now, by Lemma \ref{l:formula.fowler}, 
\begin{align*}
n^{j} 
 \P(A_{K, K'})
&\le  C n^j \prod_{i=q}^{k+1} p_i^{\binom{k+3}{i+1}} \Big(
                              \prod_{i=q}^{k+1} p_i^{\binom{k+1}{i}}
                              \Big)^{j-k-3} \\
&=  C n^{\tau_{k+2}(\balpha) + \al_{k+2}} \big( n^{1-\psi_{k+1}(\balpha)}
                                                 \big)^{j-k-3} \le
                                                 n^{\tau_{k+2}(\balpha)+\al_{k+2}}, 
\end{align*}
and,  by Proposition \ref{p:moment.face.count} and  \eqref{e:simple.lemma},
$$
\frac{n^{\tau_{k+2}(\balpha)+\al_{k+2}}}{\Var(f_k)} \le C n^{(\tau_{k+1}(\balpha)-\tau_k(\balpha)) + (\tau_q(\balpha)-\tau_k(\balpha))} \to 0,\ \ n\to\infty.
$$

\medskip

This completes the proof of \eqref{e:main.goal} and, hence, of
\eqref{e:betti.func.conv} in Theorem
\ref{t:clt.topological.invariants}. 

Finally, assuming \eqref{e:cond.regularity} and
\eqref{e:sharp.drop.k+1}, we 
establish tightness in the Skorohod $J_1$-topology. 
First of all, we already proved that under these assumptions, the convergence in \eqref{e:Euler.characteristic.func.conv} holds in the sense of weak convergence in the $J_1$-topology on $D[0,\infty)$. Fixing $T>0$ and choosing $m$ so large that $T/m \le a/4$ with $a$ defined in \eqref{e:assumption.Gi}, we again consider a static multi-parameter simplicial complex $X([n], \bp^{(1)})$ and the corresponding $j$-face counts $f_j^{(1)}$, that were used for the proof of \eqref{e:Euler.as.conv}.  
By Proposition \ref{p:vanishing.lower.order.betti} in the
Appendix, 
all we have to do is to show that 
$$
\left(\frac{\sum_{j=k+1}^{n-1} (-1)^j \beta_j(t) -\E \Big(
  \sum_{j=k+1}^{n-1} (-1)^j \beta_j \Big)}{\sqrt{\Var(f_k)}} , \, 0 \le t \le \frac{T}{m}
  \right)
\to  {\bf 0}  
$$
in probability in the $J_1$-topology. This will follow once we show that  for every $\epsilon>0$, 
\begin{equation}  \label{e:sup.upper.cond.tightness}
\P \bigg( \sup_{0\le t \le T/m} \left|  \sum_{j=k+1}^{n-1} (-1)^j \beta_j(t) - \E\Big(\sum_{j=k+1}^{n-1} (-1)^j \beta_j  \Big) \right| > \epsilon \sqrt{\Var(f_k)}  \bigg) \to 0, \ \ \ n\to\infty. 
\end{equation}
To this end, observe that by \eqref{e:sharp.drop.k+1}, for any 
$j \ge k+1$, we have 
\begin{equation}  \label{e:under4.5}
E(f_j) =\mathcal O(n^{\tau_{k+1}(\balpha)}) = o \big( n^{\tau_k(\balpha)-\tau_q(\balpha)/2} \big) = o \big( \sqrt{\Var(f_k)} \big), \ \ \ n\to\infty. 
\end{equation}
Proceeding as in \eqref{e:Markov.chi2}, while using $M(\tilde \balpha)$ defined in \eqref{e:M.alpha} and \eqref{e:alpha.tilde.rate}, we can bound the left hand side of
\eqref{e:sup.upper.cond.tightness} by 
\begin{align*}
&\frac{2}{\epsilon \sqrt{\Var(f_k)}}\, \sum_{j=k+1}^{n-1} \E\big[\sup_{0\le t \le T/m}f_j(t)\big]  \le \frac{2}{\epsilon \sqrt{\Var(f_k)}}\, \sum_{j=k+1}^{n-1} \E(f_j^{(1)}) \\
&\le \frac{2}{\epsilon} \sum_{j=k+1}^{M(\tilde \balpha)-1} \frac{\prod_{i=q}^j 2^{\binom{j+1}{i+1}}\E(f_j)}{\sqrt{\Var(f_k)}} +\frac{2}{\epsilon} \sum_{j=M(\tilde \balpha)}^\infty \E(f_j^{(1)}). 
\end{align*}
The last term converges to $0$ as $n\to\infty$ due to \eqref{e:under4.5} and Corollary \ref{cor:neg.tau}. 
\end{proof}


\section{Appendix} 
\subsection{Analysis of the Betti numbers in lower dimensions}   

We begin with introducing additional notions of connectivity. 
Given a simplicial complex $X$ and an $\ell$-dimensional simplex
$\sigma$ in $X$, let the simplicial complex $\lk_X(\sigma) := \{\tau \in X: \sigma \cap \tau = \emptyset, \sigma \cup \tau \in X\}$ denote the \textit{link} of $\sigma$ in $X$.  
In other words, $\lk_X(\sigma)$ denotes the subcomplex of $X$ consisting of all simplices whose
vertex support is disjoint from that of $\sigma$ but, together with
$\sigma$,  they form a simplex in $X$.  
If $X$ is pure $\ell$-dimensional and $\sigma$ is
$(\ell-2)$-dimensional for some $\ell \geq 2$, then $\text{lk}_X(\sigma)$ necessarily is a one-dimensional simplicial complex.
We say that an $(\ell-1)$-face in $X$ is \textit{free} if it is not contained in any of the $\ell$-faces in $X$. 
Given a graph $G$, we denote by $\lambda_2(G)$ the second smallest
eigenvalue of the normalized graph Laplacian of $G$. We will use the
\textit{cohomology vanishing theorem}  of
\cite{ballmann:swiatkowski:1997}: if $X$ is a finite pure
$\ell$-dimensional simplicial complex such that for every
$(\ell-2)$-simplex $\sigma\in X$, the link $\text{lk}_X(\sigma)$ is connected and has spectral gap $\lambda_2\bigl(
\text{lk}_X(\sigma)\bigr)>1-1/\ell$, then $H^{\ell-1}(X; \bbq)=0$. In
particular, $\beta_{\ell-1}(X)=0$.

\begin{proposition}  \label{p:vanishing.lower.order.betti}
Under the assumptions of Theorem \ref{t:clt.topological.invariants}, 
$$
\left( \frac{\beta_j(t)-\E(\beta_j)}{\sqrt{\Var(f_k)}}, \, t\geq
  0\right)
\to {\bf 0} \  \text{ in } D[0,\infty)
$$
in probability as $n\to\infty$ for all $j=0,1,\dots, k-1$, where $\bf 0$ is the
constant zero process.
\end{proposition}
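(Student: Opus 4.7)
The goal is to show $\sup_{0\le t\le T}\beta_j(t)/\sqrt{\Var(f_k)}\to 0$ in probability for every $T>0$ and each $j\in\{0,\dots,k-1\}$; since the target is the constant zero process, this implies convergence in $D[0,\infty)$.

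The key tool is the Ballmann--Swiatkowski cohomology vanishing theorem recalled above. For a fixed $t$, $X([n],\bp;t)$ is distributed as the static Costa--Farber complex, so a static analysis along the lines of \cite{fowler:2019} applies. After deleting the free $j$-faces from the $(j+1)$-skeleton (which changes $\beta_j$ by at most the number $F_j$ of free $j$-faces), the resulting pure $(j+1)$-complex has the property that the link of every $(j-1)$-face is an Erd\"os--R\'enyi-like random graph on approximately $n^{1-\psi_j(\balpha)}$ vertices. Since $\balpha\in\D_k$ gives $\psi_j(\balpha)\le\psi_{k-1}(\balpha)<1$ for $j\le k-1$, these link sizes diverge polynomially, and quantitative spectral gap concentration (strong enough for a union bound over all $(j-1)$-faces, as used in \cite{kahle:2009,fowler:2019}) gives $\lambda_2>j/(j+1)$ on every link with probability $1-o(1)$. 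On this event $\beta_j(t)\le F_j$, and $\E(F_j)\le\E(f_j)\exp\bigl(-\Theta(n^{1-\psi_{j+1}(\balpha)})\bigr)=o(\sqrt{\Var(f_k)})$ since $\psi_{j+1}(\balpha)\le\psi_k(\balpha)<1$.

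To extend to a supremum over $[0,T]$, I would combine stationarity with a two-sided coupling. Reducing to $[0,T/m]$ with $m$ large enough, the static complex $X([n],\bp^{(2)})$ with $p_i^{(2)}\ge p_i/2$ (for $i\ge q$) is a subcomplex of $X([n],\bp;t)$ for every $t\in[0,T/m]$, so every link in the dynamic complex contains the corresponding link in $X([n],\bp^{(2)})$; the upper coupling \eqref{e:coupling.trick} dominates the number of free $j$-faces at any such $t$ by the number in $X([n],\bp^{(1)})$. The spectral gap condition can then be obtained uniformly in $t$ because the lower-coupled link is already a strong expander with high probability and any augmentation from the dynamics preserves this expansion at these densities, while the free-face bound transfers via the upper coupling by monotonicity of face counts.

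The principal obstacle is this uniform-in-$t$ spectral gap control: since $\beta_j$ is not monotone under complex inclusion, $\beta_j(t)$ cannot be dominated by its value on a single static auxiliary complex, unlike the face counts in the proof of Theorem~\ref{t:SLLN}. The sandwich coupling splits the task into two monotone subproblems -- link vertex/edge sets (controlled from below) and free-face counts (controlled from above) -- thereby reducing the uniform bound to static estimates on two auxiliary complexes.
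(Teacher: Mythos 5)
The paper's proof handles the supremum over $t$ by a completely different mechanism than your sandwich coupling, and your coupling route has a genuine gap at exactly the step you flag as "the principal obstacle."

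The gap: you claim the spectral-gap bound transfers from the lower-coupled static complex to the dynamic link at every $t\in[0,T/m]$ because "any augmentation from the dynamics preserves this expansion at these densities." This is false as stated: the normalized Laplacian spectral gap $\lambda_2$ is \emph{not} monotone under graph inclusion. The dynamic link $\lk_{X_k(t)}(\sigma)$ may contain vertices that are absent from the lower-coupled static link, and such a vertex can have very low degree at time $t$ (its incident edges need not appear simultaneously), producing a small Cheeger cut and dropping $\lambda_2$ below $1-1/k$ even though the sub-link is an expander. Establishing that this does not happen uniformly over $[0,T/m]$ is itself a non-trivial uniform-in-$t$ statement about a time-varying random graph, so the sandwich coupling does not actually reduce the problem to two static estimates.

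What the paper does instead (proof of Lemma~\ref{l:betti.k-1}): it exploits that $X([n],\bp;t)$ is piecewise constant in $t$, changing only at the renewal arrival times $\eta_1\le\eta_2\le\cdots$ of the driving processes $(\Delta_{i,A},\,q\le i\le k,\,A\in\W_i)$. Writing $N(T)$ for the number of such arrivals in $[0,T]$, one has $\E[N(T)]=\mathcal O(n^{k+1})$, hence $\P(N(T)>n^{2k+2})=\mathcal O(n^{-k-1})$ by Markov. Thus the supremum over $[0,T]$ reduces (up to an $\mathcal O(n^{-k-1})$ error) to a union bound over at most $n^{2k+2}$ fixed times $\eta_\ell$ and $\binom{n}{k-1}$ choices of $(k-2)$-face. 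At each fixed time the complex is a static Costa--Farber complex by stationarity, and the Hoffman--Kahle--Paquette spectral-gap theorem gives a bad-link probability of order $m^{-\delta}$ where $\delta$ is a free parameter. Choosing $\delta$ large enough absorbs the polynomial union-bound factor $n^{3k+1}$, which is the key that makes the discretization work. Your proposal never introduces this discretization over jump times and instead relies on a monotonicity of $\lambda_2$ that does not hold; that is the missing idea.
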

\begin{proof}
If $k=1$ the claim is trivial, so assume that $k\geq 2$. 
We consider $j=k-1$ only; smaller dimensions can be treated in a
similar way. 
Proposition \ref{p:vanishing.lower.order.betti} will be established by combining a series of lemmas provided below. 
Let $F_{j}(t)$ be the number of free $j$-faces of $X([n],\bp;
t)$, and $X_k(t)$  the $k$-skeleton of $X([n],\bp; t)$. For a
$(k-2)$-face $\sigma$ in $X_k(t)$, write $L_\sigma(t) :=
|\text{lk}_{X_k(t)}(\sigma)|$, i.e., the number of vertices
in the link of
$\sigma$ in $X_k(t)$. We set $F_{j}:= F_{j}(0)$, $X_k:=X_k(0)$, and $L_\sigma:= L_\sigma(0)$. 

Consider the delayed renewal sequences defined in
\eqref{e:renewal.seq} corresponding to the 
stationary renewal processes $\big( \Delta_{i,A}, \, q\le i
\le k, \, A\in \W_i \big)$. 
Enumerating the different arrival times, 
we denote the resulting sequence by $ \eta_1
\leq \eta_2 \leq \cdots$, and set $\eta_0=0$.  For $0 < T < \infty,$ we
denote by $N(T)$ the number of these points in the interval
$[0,T]$. Clearly, $\E \big(N(T)\big) = \mathcal O(n^{k + 1})$ for every such $T$. 
\begin{lemma}   \label{l:exp.number.free.face}
For each $0\le j\leq k-1$,
$$
\E(F_{j}) = o (e^{-n^\epsilon}), \ \ n\to\infty,
$$
for some $\epsilon>0$. 
\end{lemma}
\begin{proof}
A simple calculation shows that 
$$
\E(F_j) \le n^{\tau_{j}(\balpha)}\left( 1-n^{-\psi_{j+1}(\balpha)}\right)^{n-j-1}.
$$
If $\psi_{j+1}(\balpha)=0$, the claim is trivial. Otherwise, 
$$
\E(F_{j}) \le C n^{\tau_{j}(\balpha)} e^{-n^{1-\psi_{j+1}(\balpha)}}. 
$$
Since $\psi_{j+1}(\balpha)\leq \psi_k(\balpha) < 1$, the
result follows. 
\end{proof}
\begin{lemma}  \label{l:pureness}
$$
\P(X_k \text{ is pure}) = 1-o(e^{-n^\epsilon}), \ \ n\to\infty, 
$$
for some $\epsilon>0$. 
\end{lemma}
\begin{proof} By Lemma \ref{l:exp.number.free.face}, 
\begin{align*}
\P(X_k \text{ is pure}) &= \P(F_j=0, \ \ j=0,\dots,k-1) \\
&\ge 1-\sum_{j=0}^{k-1} \P(F_j \ge 1) \ge 1-\sum_{j=0}^{k-1} \E(F_j) = 1-o(e^{-n^\epsilon}). 
\end{align*}
\end{proof}
\begin{lemma}  \label{l:log.L.sigma}
Fix $\delta>0$. For a $(k-2)$-face $\sigma$ of $X_k$, 
$$
\P \left( \frac{(1+\delta)\log L_\sigma}{L_\sigma} > p_1  \right) = o(e^{-n^\epsilon}), \ \ n\to\infty,
$$
for some $\epsilon>0$. 
\end{lemma}
\begin{proof}
Note that
$$
1-\psi_{k-1}(\balpha) >\psi_k(\balpha)-\psi_{k-1}(\balpha)\geq
\alpha_1, 
$$
and $(1+\delta)x^{-1}\log x$ is decreasing for $x\ge e$.
Therefore, if $L_\sigma\geq n^{1-\psi_{k-1}(\balpha)}/2$, then 
$$
\frac{(1+\delta)\log L_\sigma}{L_\sigma} \leq
\frac{(1+\delta)\log \bigl(n^{1-\psi_{k-1}(\balpha)}/2\bigr)}
{n^{1-\psi_{k-1}(\balpha)}/2}<n^{-\alpha_1}=p_1
$$
for large $n$. Hence, for large $n$,
$$
\P \left( \frac{(1+\delta)\log L_\sigma}{L_\sigma} > p_1  \right)
\leq \P\left( L_\sigma<\frac{n^{1-\psi_{k-1}(\balpha)}}{2}\right), 
$$
and the claim follows from the basic properties of the binomial
distribution because $L_\sigma$ has a binomial distribution with
parameters $n-k+1$ and $n^{-\psi_{k-1}(\balpha)}$; see, e.g., Lemma 4.2 in \cite{fowler:2019}.
\end{proof}
\begin{lemma}  \label{l:betti.k-1}
For every $0<T<\infty$, 
$$
\P\Big(\sup_{0\le t \le T}\beta_{k-1}(t) \neq 0 \Big) = \mathcal O(n^{-k-1}), \ \ n\to\infty. 
$$
\end{lemma}
\begin{proof}
By the cohomology vanishing theorem, 
\begin{align*}
&\P \big( \sup_{0\le t \le T} \beta_{k-1}(t) =0 \big) = \P\Big(  \sup_{0 \le t \le T} \beta_{k-1}\big( X_k(t) \big) =0 \Big) \\
&=\P \Big(\beta_{k-1}\big( X_k(\eta_\ell) \big)=0 \text{ for } \ell =    0,1,\dots, \ N(T)  \Big) \\
&\ge \P \Big(\beta_{k-1}\big( X_k(\eta_\ell) \big)=0  \text{ for } \ell =   0,1,\dots, n^{2k+2}, \,  N(T) \le n^{2k+2} \Big) \\ 
&\ge \P \Big(  \bigcap_{\ell=0}^{n^{2k+2}} \Big( \Big\{  \lambda_2 \big( \text{lk}_{X_k(\eta_\ell)}(\sigma) \big)> 1-\frac{1}{k}  \text{ and } \text{lk}_{X_k(\eta_\ell)}(\sigma) \text{ is connected} \\ 
&\qquad \qquad \text{for every }(k-2)\text{-face } \sigma \text{ in } X_k(\eta_\ell) \Big\}  \cap \Big\{ X_k(\eta_\ell) \text{ is pure}
   \Big\}\Big) \cap \Big\{  N(T) \le n^{2k+2} \Big\} \Big) \\
      &\ge 1-\sum_{\ell=0}^{n^{2k+2}} \binom{n}{k-1}
        \P \left(    \lambda_2
        \big(  \text{lk}_{X_k(\eta_\ell)}(\sigma_0) \big) \le
        1-\frac{1}{k}  \text{ or } \text{lk}_{X_k(\eta_\ell)}(\sigma_0) \text{ is disconnected} \right) \\
  &    \quad \ \ -\sum_{\ell=0}^{n^{2k+2}}\P \bigl( X_k(\eta_\ell) \text{ is
    not pure}  \big)  - \P\Big(   N(T) > n^{2k+2}  \Big). 
\end{align*}
Here $\sigma_0$ is a fixed $(k-2)$-simplex.   Clearly, 
\begin{align*}
 \P\big(   N(T) > n^{2k+2}  \big) &\le \frac{\E [N(T)]}{n^{2k+2}} =\mathcal O (n^{-k-1});
\end{align*}
so, by  Lemma \ref{l:pureness} and the stationarity of $X_k$, 
\begin{align*}
&\P \big( \sup_{0\le t \le T} \beta_{k-1}(t) =0 \big) \\
&\ge 1-n^{3k+1}
    \P\Big(     \lambda_2 \big(  \text{lk}_{X_k(\eta_\ell)}(\sigma_0) \big) \le 1-\frac{1}{k}  \text{ or } \text{lk}_{X_k(\eta_\ell)}(\sigma_0) \text{ is disconnected}  \Big)  - \mathcal O(n^{-k-1}).
\end{align*}
Given $\sigma_0\in X_k$, we have by Lemma  \ref{l:log.L.sigma} and its proof that, for some $\epsilon>0$, 
\begin{align*}
&\P \Big(   \lambda_2 \big(  \text{lk}_{X_k(\eta_\ell)}(\sigma_0) \big) \le 1-\frac{1}{k}  \text{ or } \text{lk}_{X_k(\eta_\ell)}(\sigma_0) \text{ is disconnected}  \Big) \\
&= \P \Big(  \Big\{   \lambda_2 \big(  \text{lk}_{X_k(\eta_\ell)}(\sigma_0) \big) \le 1-\frac{1}{k}  \text{ or } \text{lk}_{X_k(\eta_\ell)}(\sigma_0) \text{ is disconnected}  \Big\} \\
&\qquad \qquad \cap \Big\{ \frac{(1+\delta)\log L_{\sigma_0}}{L_{\sigma_0}} \le p_1, \ L_{\sigma_0} \ge \frac{n^{1-\psi_{k-1}(\balpha)}}{2} \Big\} \Big)+ o(e^{-n^\epsilon}) \\
&=\sum_{m=1}^{n-k+1} \P \Big(   \lambda_2 \big(  \text{lk}_{X_k(\eta_\ell)}(\sigma_0) \big) \le 1-\frac{1}{k}  \text{ or } \text{lk}_{X_k(\eta_\ell)}(\sigma_0) \text{ is disconnected}  \, \Big|\, L_{\sigma_0}=m \Big) \\
&\qquad \qquad \qquad \qquad \times \one \Big\{  \frac{(1+\delta)\log m}{m} \le p_1, \ m \ge \frac{n^{1-\psi_{k-1}(\balpha)}}{2}  \Big\}\, \P(L_\sigma=m) + o(e^{-n^\epsilon}). 
\end{align*}
However, $\text{lk}_{X_k}(\sigma_0) | L_{\sigma_0}=m$ has the law of
the Erd\"os-R\'enyi graph with parameters $m$ and $p_1$; see Lemma~4.2
in \cite{fowler:2019}. Furthermore, in the range of $m$ we are
considering, $p_1 \ge \frac{(1+\delta)\log m}{m}$. It follows from 
the spectral gap theorem of Theorem 1.1 in
\cite{hoffman:kahle:paquette:2019} that for some $\delta$-dependent constant $C$, 
$$
\P \Big(    \lambda_2 \big(  \text{lk}_{X_k(\eta_\ell)}(\sigma_0) \big) \le 1-\frac{1}{k}  \text{ or } \text{lk}_{X_k(\eta_\ell)}(\sigma_0) \text{ is disconnected} \, \Big|\, L_\sigma=m \Big) \leq Cm^{-\delta}. 
$$
We conclude that 
\begin{align*}
&\P \Big(    \lambda_2 \big(  \text{lk}_{X_k(\eta_\ell)}(\sigma_0) \big) \le 1-\frac{1}{k}  \text{ or } \text{lk}_{X_k(\eta_\ell)}(\sigma_0) \text{ is disconnected} \Big) \\
&\quad \le C\Big(
                 \frac{n^{1-\psi_{k-1}(\balpha)}}{2} \Big)^{-\delta}
                 + o(e^{-n^\epsilon}) = \mathcal
                 O\big(n^{-\delta(1-\psi_{k-1}(\balpha))} \big),  
\end{align*}
and so 
$$
\P\Big(\sup_{0\le t \le T}\beta_{k-1}(t)= 0 \Big)  \ge 1 - \mathcal O\big(n^{3k+1-\delta(1-\psi_{k-1}(\balpha))} \big) - \mathcal O(n^{-k-1}). 
$$
As $1-\psi_{k-1}(\balpha)>0$, the claim follows by taking large enough $\delta>0$. 
\end{proof}
We can now complete the proof of the proposition. Since
$\Var(f_k)\to\infty$,  we have  for any
$0<T< \infty$ and $\epsilon>0$, using Lemma  \ref{l:betti.k-1}, 
\begin{align*}
&\P \Big( \sup_{0\le t \le T} \big|  \beta_{k-1}(t)-\E(\beta_{k-1}) \big| > \epsilon \sqrt{\Var(f_k)} \Big) \le \frac{2}{\epsilon} \E \Big( \sup_{0\le t \le T} \beta_{k-1}(t) \Big) \\
&\le \frac{2}{\epsilon} \E \Big( \sup_{0\le t \le T} f_{k-1}(t)\, \one
    \big\{ \sup_{0\le t \le T} \beta_{k-1}(t) \neq 0 \big\} \Big)  \le \frac{2}{\epsilon} \binom{n}{k} \P\Big( \sup_{0\le t \le T} \beta_{k-1}(t) \neq 0 \Big)\\
&=\frac{2}{\epsilon} \mathcal O(n^{-1}) \to 0, \ \ \ n\to\infty,
\end{align*}
as required. 
\end{proof}

\remove{\section{Formal proof of Eq.~at page 9, line 15}

From Section \ref{sec:4.3} and (4.4) in Takashi-note 01.22.2019, we now have 
$$
\frac{\sum_{j=k}^{M-1} (-1)^j \beta_j - \E \big( \sum_{j=k}^{M-1} (-1)^j \beta_j \big)}{\sqrt{\Var (f_k)}} \Rightarrow \mathcal N(0,1), \ \ \ n\to\infty, 
$$
where $M=\min \{ i>k: \tau_i(\balpha)<0 \}$. Set a positive integer $D$ such that 
$$
D > \frac{M+1 + \tau_{k+1}(\balpha)}{\psi_{k+1}(\balpha)-1}. 
$$
(this definition is slightly different from that in Takashi-note 01.22.2019). Define also 
$$
\tilde{\beta}_{k+1}  = \tilde{\beta}_{k+1} \big( X([n],p) \big) = \sum_{j=k+3}^{D+k+1} \sum_{r\ge 1} \sum_{\sigma \subset [n], \, |\sigma|=j}r \etasigjrko. 
$$ 
Let us assume, without loss of generality, $M=k+2$. We shall prove the following result. The proof is essentially the same as Gugan's proof for Claim 2. 
\begin{proposition}
$$
\frac{\beta_k-\E(\beta_k)}{\sqrt{\Var(f_k)}} - \frac{\tilde{\beta}_{k+1}-\E(\tilde{\beta}_{k+1})}{\sqrt{\Var(f_k)}} \Rightarrow \mathcal N(0,1). 
$$
\end{proposition}
\begin{proof}
The statement holds if we can show that $\E(\beta_{k+1}-\tilde{\beta}_{k+1}) \to 0$ as $n\to\infty$. Using the fact that for $\sigma \subset [n]$ with $|\sigma|=j$, 
$$
\beta_{k+1} \big( X(\sigma,p) \big) \le \binom{j}{k+2} \le j^{k+2},
$$
which dropping the maximal condition, 
\begin{align*}
&\E (\beta_{k+1}-\tilde{\beta}_{k+1}) \\
&\le \sum_{j=D+k+2}^n j^{k+2} \E \Big[  \sum_{\substack{\sigma\subset [n], \\ |\sigma|=j}}  \one \big\{ \sigma \text{ spans a strongly connected } (k+1)\text{-dim subcomplex}  \big\}  \Big]. 
\end{align*}
Since a strongly connected subcomplex on $j (\ge D+k+2)$ vertices must contain a subcomplex with strongly connected support on $D+k+2$ vertices, the rightmost term above is bounded by 
$$
\sum_{j=D+k+2}^n j^{k+2} n^{D+k+2} \P \big( \sigma \text{ spans a strongly connected } (k+1)\text{-dim subcomplex on } D+k+2 \text{ vertices}   \big). 
$$
As there are at most finitely many isomorphism classes of strongly connected $(k+1)$-dim subcomplexes on $D+k+2$ vertices, 
\begin{align*}
&\P \big( \sigma \text{ spans a strongly connected } (k+1)\text{-dim subcomplex on } D+k+2 \text{ vertices}   \big) \\
&\le \sum_{K: |K|=j} \P(\sigma \text{ is isomorphic to } K), 
\end{align*}
where $\sum_{K: |K|=j}$ is a finite sum of all such complexes. 

According to the proof of Lemma 18 in \cite{fowler:2015}, there are at least 
$$
\binom{k+2}{i+1} + D \binom{k+1}{i} 
$$
$i$-faces for each $1\le i \le k+1$. Hence for every $K$, 
$$
\P(\sigma \text{ is isomorphic to } K) \le C\prod_{i=1}^{k+1} p_i^{\binom{k+2}{i+1} + D \binom{k+1}{i}}, 
$$
and thus, as $n\to\infty$,
\begin{align*}
\E(\beta_{k+1}-\tilde{\beta}_{k+1}) &\le C\sum_{j=D+k+2}^n j^{k+2} n^{\tau_{k+1}(\balpha)-D(\psi_{k+1}(\balpha)-1)} \\
&\le n^{k+3+\tau_{k+1}(\balpha)-D(\psi_{k+1}(\balpha)-1)} \to 0
\end{align*}
by the choice of $D$. 
\end{proof}}

\subsection{Representation of the Betti number}
In this section we verify \eqref{e:beta.k+1}. 
\begin{proposition}  \label{p:betti.representation}
For $\ell \ge 1$, 
\begin{equation}  \label{e:assertion.representation}
\beta_\ell(t) = \beta_\ell \big( X([n],\bp; t) \big) = \sum_{j=\ell+2}^n \sum_{r \ge 1} \sum_{\sigma \subset [n], \, |\sigma|=j} r\eta_\sigma^{(j,r,\ell)}(t), 
\end{equation}
where $\eta_\sigma^{(j,r,\ell)}(t)$ is the indicator function in \eqref{e:beta.k+1}. 
\end{proposition}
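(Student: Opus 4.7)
The plan is to decompose the complex $X([n], \bp; t)$ into its maximal $\ell$-strongly connected subcomplexes $K_1, \ldots, K_m$ and then show that the $\ell$-th Betti number is additive over this decomposition, i.e., $\beta_\ell(X([n],\bp;t)) = \sum_{i=1}^m \beta_\ell(K_i)$. The identity \eqref{e:assertion.representation} will then follow by reindexing the sum according to the vertex support $\sigma$ of each $K_i$, grouping by the value $r = \beta_\ell(K_i)$, and observing that any $K_i$ on at most $\ell+1$ vertices is a single $\ell$-simplex together with its faces (hence contractible, so $\beta_\ell(K_i)=0$), which justifies restricting the outer sum to $j \geq \ell + 2$.

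The crux of the argument will be a separation claim: two distinct maximal $\ell$-strongly connected subcomplexes $K_i$ and $K_j$ cannot share any face of dimension $\geq \ell - 1$. I would argue by contradiction: if $\sigma$ were such a shared face, then by the pureness of the $\ell$-skeleton required in the definition of $\ell$-strong connectivity, $\sigma$ would be contained in some $\ell$-face $\tau_i \in K_i$ and some $\ell$-face $\tau_j \in K_j$. These would then intersect in a face of dimension at least $\ell - 1$, so either $\tau_i = \tau_j$ or $\dim(\tau_i \cap \tau_j) = \ell - 1$. In either case $\tau_i$ and $\tau_j$ are directly adjacent as $\ell$-faces, so concatenating internal paths of $K_i$ and $K_j$ through $\tau_i$ and $\tau_j$ would show that $K_i \cup K_j$ is itself $\ell$-strongly connected (its $\ell$-skeleton remains pure), contradicting the maximality of $K_i$.

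Given this separation, I will establish that every face of $X$ of dimension at least $\ell$ lies in a unique $K_i$: for $\ell$-faces this is by the definition of the decomposition, while for any $(\ell+r)$-face $\tau$ with $r \geq 1$, the $\ell$-sub-faces of $\tau$ form an $\ell$-strongly connected family (any two can be connected by a chain of $\ell$-sub-faces of $\tau$ obtained via single-vertex swaps, which pairwise share an $(\ell-1)$-face), so they all lie in the same $K_i$; maximality then forces $\tau$ itself to lie in that $K_i$. Consequently, the chain groups split as $C_j(X) = \bigoplus_{i=1}^m C_j(K_i)$ for every $j \geq \ell$, and $C_{\ell-1}(X)$ contains $\bigoplus_{i=1}^m C_{\ell-1}(K_i)$ as a direct summand (the remaining $(\ell-1)$-faces, if any, are those contained in no $\ell$-face of $X$). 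Since the boundary operators $\partial_{\ell+1}$ and $\partial_\ell$ send chains supported on $K_i$ to chains supported on $K_i$, they are block diagonal with respect to this decomposition, yielding $H_\ell(X) \cong \bigoplus_{i=1}^m H_\ell(K_i)$ and hence the desired additivity.

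The main obstacle is the separation claim in the second paragraph; once it is in place, the rest of the proof is routine bookkeeping for the chain complex. The key technical ingredient is the pureness condition in the definition of $\ell$-strong connectivity: without it, two maximal subcomplexes could share $(\ell-1)$-faces, and the chain complex would no longer decompose cleanly, potentially allowing extra $\ell$-cycles arising from cancellations across shared lower-dimensional faces.
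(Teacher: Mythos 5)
Your proposal is correct and reaches the same conclusion via a genuinely different algebraic mechanism. The key combinatorial step — the separation claim that two distinct maximal $\ell$-strongly connected subcomplexes cannot share a face of dimension $\ge \ell-1$ — is exactly the assertion $\dim(X_{i_1}\cap X_{i_2})\le \ell-2$ that the paper's proof rests on. Where you diverge is in how you convert this geometric separation into additivity of Betti numbers: the paper applies the Mayer–Vietoris exact sequence twice (once to discard the low-dimensional remainder $X_{N+1}$, and once inductively over the pieces $X_i$, invoking a rank formula), whereas you observe directly that the chain groups split as direct sums in degrees $\ge \ell$, that $C_{\ell-1}$ contains $\bigoplus_i C_{\ell-1}(K_i)$ as a summand, and that $\partial_{\ell+1}$ and $\partial_\ell$ are block-diagonal, giving $H_\ell(X)\cong\bigoplus_i H_\ell(K_i)$ in one step. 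Your route is more elementary (no exact sequences or rank bookkeeping) and handles the residual low-dimensional faces uniformly rather than via a separate Mayer–Vietoris application; the paper's route is more standard homological machinery. One cosmetic point worth tightening in a final write-up: you assert the $K_i$ exist and cover all $\ell$-faces ``by the definition of the decomposition,'' whereas the paper makes this explicit by first constructing the equivalence classes $\G_1,\dots,\G_N$ of $\ell$-simplices under the relation $\sigma\sim\tau$ and taking $X_i$ to be the subcomplex generated by $\G_i$; adopting that explicit construction would make the existence and uniqueness of your $K_i$ airtight, and then your separation claim becomes a consequence rather than a hypothesis on the decomposition.
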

\begin{proof}
For $\ell$-simplices $\sigma, \tau$ in $X([n],\bp; t)$, write $\sigma \sim
\tau$ if  they can be connected by a sequence of $\ell$-simplices
$\sigma=\sigma_0, \sigma_1, \dots, \sigma_{j-1}, \sigma_j =\tau$ such
that dim$(\sigma_i \cap \sigma_{i+1})=\ell-1$, $0 \le i \le
j-1$. Clearly $\sim$ is an equivalence relation. Consider the
equivalence classes $\G_1, \dots, \G_N$  associated with this
relation. For each $i=1,\dots,N$, let $X_i$ be the smallest subcomplex
of $X([n],\bp; t)$ containing all the simplices for which some
$\ell$-simplex in $\G_i$ is a face.  Then $X_i$
is necessarily a maximal $\ell$-strongly connected subcomplex, such
that dim$(X_{i_1}\cap X_{i_2}) \le \ell-2$ for any distinct $1 \le i_1
\neq i_2 \le N$.  
Let  $X^{(N)} := \bigcup_{i=1}^NX_i$ and let $X_{N+1}$ be a subcomplex
of $X([n],\bp; t)$ containing all simplices in $X([n],\bp;
t)\setminus X^{(N)}$. By construction dim$(X_{N+1}) \le \ell-1$ and
dim$(X_{N+1}\cap X^{(N)}) \le \ell-2$. 
With
this setup, establishing the claim of the proposition reduces to
proving the following statements: 
\begin{equation}\label{e:first.representation}
\beta_\ell \big( X([n],\bp;t)  \big)=  \beta_\ell(X^{(N)}), 
\end{equation}
and 
\begin{equation}\label{e:second.representation}
\beta_\ell(X^{(N)}) = \sum_{i=1}^N\beta_\ell(X_i). 
\end{equation}
Indeed, since $\sum_{i=1}^N\beta_\ell(X_i)$ in \eqref{e:second.representation} is clearly equal to the right hand side of \eqref{e:assertion.representation}, our proof will be done once \eqref{e:first.representation} and \eqref{e:second.representation} are both established. 
For the proof of \eqref{e:first.representation} we exploit the
following Mayer-Vietoris exact sequence: 
\begin{align*}
\dots \rightarrow  H_\ell\big( X^{(N)}\cap X_{N+1} \big) &\stackrel{\lambda_\ell}{\to} H_\ell\big( X^{(N)} \big) \oplus H_\ell(X_{N+1}) \to H_\ell \big( X([n],\bp;t) \big) \\
&\to H_{\ell-1}\big( X^{(N)}\cap X_{N+1} \big) \stackrel{\lambda_{\ell-1}}{\to} H_{\ell-1} \big( X^{(N)} \big) \oplus  H_{\ell-1}(X_{N+1}) \to \dots,
\end{align*}
where $H_\ell$ represents the homology group of order $\ell$, and
$\lambda_\ell = (\lambda^{(1)}_\ell, \lambda^{(2)}_\ell)$ denotes the
homomorphism induced by the inclusions $X^{(N)} \cap X_{N+1}
\hookrightarrow X^{(N)}$ and $X^{(N)} \cap X_{N+1} \hookrightarrow
X_{N+1}$. An elementary rank calculation (see e.g., Lemma 2.3 in \cite{yogeshwaran:subag:adler:2017}) yields 
\begin{align*}
\beta_\ell \big( X([n],\bp;t) \big) = \beta_\ell \big( X^{(N)} \big) +
  \beta_\ell (X_{N+1}) + \text{rank} (\text{ker} \lambda_\ell) +
  \text{rank} (\text{ker} \lambda_{\ell-1}) - \beta_\ell
  \big(X^{(N)}\cap X_{N+1} \big).  
\end{align*}
Since dim$(X_{N+1})\le \ell -1$ and dim$\big( X^{(N)}\cap X_{N+1}\big) \le \ell-2$, we have that 
$$
H_\ell (X_{N+1}) \cong 0, \ \ H_\ell \big( X^{(N)}\cap X_{N+1} \big)
\cong 0, \ \ \ H_{\ell-1}\big( X^{(N)}\cap X_{N+1} \big) \cong 0. 
$$
 In particular, ker$\lambda_\ell$ and ker$\lambda_{\ell-1}$ are both trivial. Combining all these observations we obtain \eqref{e:first.representation}. 

We now turn to deriving \eqref{e:second.representation}. The statement
is trivial for $N=1$. If $N>1$, we denote $X^{(j)} :=
\bigcup_{i=1}^jX_i$ and prove that $\beta_\ell\big( X^{(j)} \big) =
\sum_{i=1}^{j}\beta_\ell (X_i)$ for $j=1,\ldots, N$ inductively. 
Once again, the case   $j=1$ is trivial, so suppose for induction that
$\beta_\ell\big( X^{(j-1)} \big) = \sum_{i=1}^{j-1}\beta_\ell
(X_i)$ for some $1\leq j<N$. We consider another Mayer-Vietoris exact
sequence, given by   
\begin{align*}
\dots \rightarrow  H_\ell\big( X^{(j-1)}\cap X_j \big) &\stackrel{\nu_\ell}{\to} H_\ell\big( X^{(j-1)} \big) \oplus H_\ell(X_j) \to H_\ell \big( X^{(j)} \big) \\
&\to H_{\ell-1}\big( X^{(j-1)}\cap X_j \big) \stackrel{\nu_{\ell-1}}{\to} H_{\ell-1} \big( X^{(j-1)} \big) \oplus  H_{\ell-1}(X_j) \to \dots,
\end{align*}
where $\nu_\ell$, $\nu_{\ell-1}$ are group homomorphisms analogous to
the earlier situation. Since dim$\big( X^{(j-1)}\cap X_j \big)\le
\ell-2$, the same rank computation as above gives us 
$$
\beta_\ell \big( X^{(j)} \big) = \beta_\ell \big( X^{(j-1)} \big) + \beta_\ell (X_j) + \text{rank} (\text{ker} \nu_\ell) + \text{rank} (\text{ker} \nu_{\ell-1}) - \beta_\ell \big(X^{(j-1)}\cap X_j \big) = \sum_{i=1}^j \beta_\ell(X_i),
$$
completing the induction step. 
\end{proof}

\noindent \textbf{Acknowledgement}: The authors would like to thank
the anonymous referee and the Associate Editor for their comments that
lead to a substantial improvement of the paper.



\end{document}